


\documentclass[11pt,fleqn]{article}
\usepackage{graphicx}
\usepackage{amsthm,amsmath}
\usepackage{amssymb,dsfont,mathrsfs}


\textheight 22.0cm 
\textwidth 16.5cm
\topmargin 0.0cm
\headheight 0.0cm
\headsep 0.0cm
\footskip 1.0cm
\oddsidemargin -0.25cm 


%
\numberwithin{equation}{section}
\newtheorem{theorem}{Theorem}[section]
\newtheorem{lemma}{Lemma}[section]
\newtheorem{proposition}{Proposition}[section]
\newtheorem{corollary}{Corollary}[section]
\newtheorem{remark}{Remark}[section]
\newtheorem{assumption}{Assumption}[section]

\def\ba{\boldsymbol{a}}
\def\bb{\boldsymbol{b}}
\def\bc{\boldsymbol{c}}

\def\bg{\boldsymbol{g}}

\def\bl{\boldsymbol{l}}

\def\bp{\boldsymbol{p}}

\def\bu{\boldsymbol{u}}
\def\bv{\boldsymbol{v}}

\def\bx{\boldsymbol{x}}

\def\bJ{\boldsymbol{J}}
\def\bM{\boldsymbol{M}}
\def\bX{\boldsymbol{X}}
\def\bY{\boldsymbol{Y}}

\def\bvarphi{\boldsymbol{\varphi}}

\def\bnu{\boldsymbol{\nu}}
\def\btheta{\boldsymbol{\theta}}

\def\bzero{\mathbf{0}}
\def\bone{\mathbf{1}}

\def\scrI{\mathscr{I}}

\def\spr{\mbox{\rm spr}}
\def\cp{\mbox{\rm cp}}
\def\diag{\mbox{\rm diag}}

\def\vec{\mbox{\rm vec}}
%

%


\title{Tail Asymptotics in any direction of the stationary distribution in a two-dimensional discrete-time QBD process}
\author{Toshihisa Ozawa  \\ 
Faculty of Business Administration, Komazawa University \\
1-23-1 Komazawa, Setagaya-ku, Tokyo 154-8525, Japan \\
E-mail: toshi@komazawa-u.ac.jp
}
\date{} 

\begin{document}

\maketitle

\begin{abstract}
We consider a discrete-time two-dimensional quasi-birth-and-death process (2d-QBD process for short) $\{(\boldsymbol{X}_n,J_n)\}$ on $\mathbb{Z}_+^2\times S_0$, where $\boldsymbol{X}_n=(X_{1,n},X_{2,n})$ is the level state, $J_n$ the phase state (background state) and $S_0$ a finite set, and study asymptotic properties of the stationary tail distribution. The 2d-QBD process is an extension of usual one-dimensional QBD process. 
By using the matrix analytic method of the queueing theory and the complex analytic method, we obtain the asymptotic decay rate of the stationary tail distribution in any direction. This result is an extension of the corresponding result for a certain two-dimensional reflecting random work without background processes, obtained by using the large deviation techniques. 
We also present a condition ensuring the sequence of the stationary probabilities geometrically decays without power terms, asymptotically. 
Asymptotic properties of the stationary tail distribution in the coordinate directions in a 2d-QBD process have already been studied in the literature. The results of this paper are also important complements to those results. 

\smallskip
{\it Keywards}: quasi-birth-and-death process, Markov modulated reflecting random walk, Markov additive process, asymptotic decay rate, stationary distribution, matrix analytic method

\smallskip
{\it Mathematics Subject Classification}: 60J10, 60K25
\end{abstract}

%
%
\section{Introduction} \label{sec:intro}

We deal with a two-dimensional discrete-time quasi-birth-and-death process (2d-QBD process for short), which is an extension of ordinary one dimensional QBD process (see, for example, Latouche and Ramaswami \cite{Latouche99}),  and study asymptotic properties of the stationary tail distribution \textit{in any direction}. 
The 2d-QBD process is also a two-dimensional skip-free Markov modulated reflecting random walk (2d-MMRRW for short), and the 2d-MMRRW is a two-dimensional skip-free reflecting random walk (2d-RRW for short) \textit{having a background process}.  Asymptotics of the stationary distributions in various 2d-RRWs \textit{without background processes} have been investigated in the literature for several decades, especially, by Masakiyo Miyazawa and his colleagues (see a survey paper \cite{Miyazawa11} of Miyazawa and references therein). Some of their results have been extended to the 2d-QBD process in Ozawa \cite{Ozawa13}, Miyazawa \cite{Miyazawa15} and Ozawa and Kobayashi \cite{Ozawa18}, where the asymptotic decay rates and exact asymptotic formulae of the stationary distribution \textit{in the coordinate directions} were obtained  (cf.\ results in Miyazawa \cite{Miyazawa09} and Kobayashi and Miyazawa \cite{Kobayashi13}). In this paper, we further extend it to an arbitrary direction. In Miyazawa \cite{Miyazawa15}, the tail decay rates of the marginal stationary distribution in an arbitrary direction have also been obtained. 

Let a Markov chain $\{\bY_n\}=\{(\bX_n,J_n)\}$ be a 2d-QBD process on the state space $\mathbb{Z}_+^2\times S_0$, where $\bX_n=(X_{1,n},X_{2,n})$, $S_0$ is a finite set with cardinality $s_0$, i.e., $S_0=\{1,2,...,s_0\}$, and $\mathbb{Z}_+$ is the set of all nonnegative integers.  The process $\{\bX_n\}$ is called the level process, $\{J_n\}$ the phase process (background process), and the transition probabilities of the level process vary according to the state of the phase process. This modulation is space homogeneous except for the boundaries of $\mathbb{Z}_+^2$. The level process is assumed to be skip free, i.e., for any $n\ge 0$, $\bX_{n+1}-\bX_n\in\{-1,0,1\}^2$. 
Stochastic models arising from various Markovian two-queue models and two-node queueing networks such as two-queue polling models and generalized two-node Jackson networks with Markovian arrival processes and phase-type service processes can be represented as two-dimensional continuous-time QBD processes, and their stationary distributions can be analyzed through the corresponding two-dimensional discrete-time QBD processes obtained by the uniformization technique; See, for example, Refs.\ \cite{Miyazawa15,Ozawa13,Ozawa19}. In that sense, 2d-QBD processes are more versatile than 2d-RRWs, which have no background processes. This is a reason why we are interested in stochastic models with a background process. Here we emphasize that the assumption of skip-free is not so restricted since any 2d-MMRRW \textit{with bounded jumps} can be represented as a 2d-MMRRW \textit{with skip-free jumps} (i.e., 2d-QBD process); See Introduction of Ozawa \cite{Ozawa21}. 

%
Denote by $\scrI_2$ the set of all the subsets of $\{1,2\}$, i.e., $\scrI_2=\{\emptyset,\{1\},\{2\},\{1,2\}\}$, and we use it as an index set. Divide $\mathbb{Z}_+^2$ into $2^2=4$ exclusive subsets defined as 
\[
\mathbb{B}^\alpha=\{\bx=(x_1,x_2)\in\mathbb{Z}_+^2; \mbox{$x_i>0$ for $i\in\alpha$, $x_i=0$ for $i\in\{1,2\}\setminus \alpha$} \},\ \alpha\in\scrI_2.  
\]
The class $\{\mathbb{B}^\alpha; \alpha\in\scrI_2\}$ is a partition of $\mathbb{Z}_+^2$. $\mathbb{B}^\emptyset$ is the set containing only the origin, and $\mathbb{B}^{\{1,2\}}$ is the set of all positive points in $\mathbb{Z}_+^2$. 
Let $P$ be the transition probability matrix of the 2d-QBD process $\{\bY_n\}$ and represent it in block form as $P=\left( P_{\bx,\bx'}; \bx,\bx'\in\mathbb{Z}_+^2 \right)$, where $P_{\bx,\bx'}=(p_{(\bx,j),(\bx',j')}; j,j'\in S_0)$ and $p_{(\bx,j),(\bx',j')}=\mathbb{P}(\bY_1=(\bx',j')\,|\,\bY_0=(\bx,j))$. For $\alpha\in\scrI_2$ and $i_1,i_2\in\{-1,0,1\}$, let $A^\alpha_{i_1,i_2}$ be a one-step transition probability block from a state in $\mathbb{B}^\alpha$, which is defined as 
\[
[A^\alpha_{i_1,i_2}]_{j_1,j_2}=\mathbb{P}(\bY_1=(\bx+(i_1,i_2),j_2)\,|\,\bY_0=(\bx,j_1))\ \mbox{for any $\bx\in\mathbb{B}^\alpha$}, 
\]
where we assume the blocks corresponding to impossible transitions are zero (see Fig.\ \ref{fig:fig11}). %
%
\begin{figure}[t]
\begin{center}
\includegraphics[width=55mm,trim=0 0 0 0]{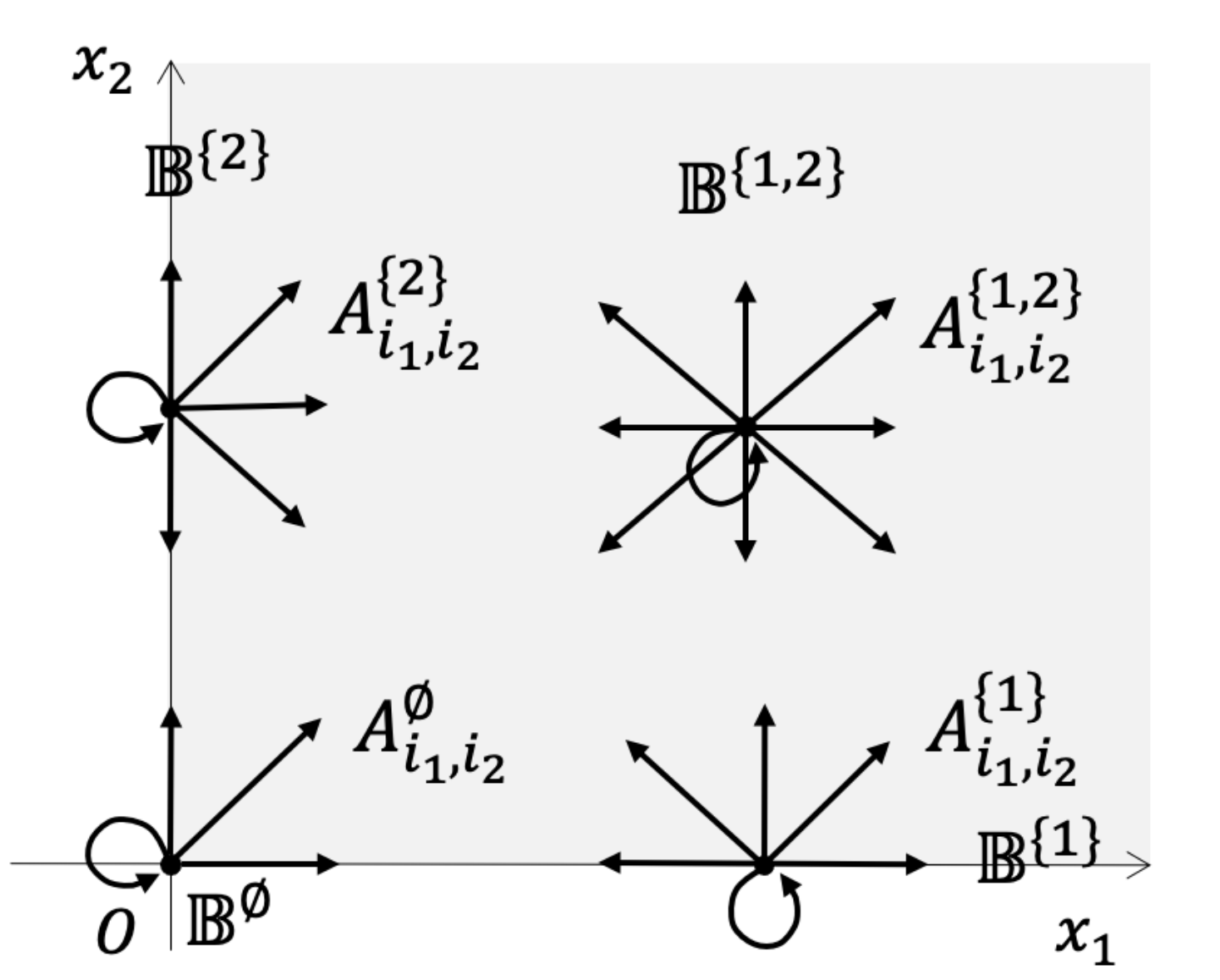} 
\caption{Transition probability blocks}
\label{fig:fig11}
\end{center}
\end{figure}
%
For example, if $\alpha=\{1\}$, we have $A^{\alpha}_{i,-1}=O$ for $i\in\{-1,0,1\}$. 
Since the level process is skip free, for every $\bx,\bx'\in\mathbb{Z}_+^2$, $P_{\bx,\bx'}$ is given by 
\begin{equation}
P_{\bx,\bx'} 
= \left\{ \begin{array}{ll} 
A^\alpha_{\bx'-\bx}, & \mbox{if $\bx\in\mathbb{B}^\alpha$ for some $\alpha\in\scrI_2$ and $\bx'-\bx\in\{-1,0,1\}^2$}, \cr
O, & \mbox{otherwise}.
\end{array} \right.
\end{equation}
We assume the following condition throughout the paper. 
\begin{assumption} \label{as:QBD_irreducible}
The 2d-QBD process $\{\bY_n\}$ is irreducible and aperiodic. 
\end{assumption}

%
Next, we define several Markov chains derived from the 2d-QBD process.
For a nonempty set $\alpha\in\scrI_2$, let $\{\bY^\alpha_n\}=\{(\bX^\alpha_n,J^\alpha_n)\}$ be a process derived from the 2d-QBD process $\{\bY_n\}$ by removing the boundaries that are orthogonal to the $x_i$-axis for each $i\in\alpha$. To be precise, the process $\{\bY^{\{1\}}_n\}$ is a Markov chain on $\mathbb{Z}\times\mathbb{Z}_+\times S_0$ whose transition probability matrix $P^{\{1\}}=(P^{\{1\}}_{\bx,\bx'};\bx,\bx'\in\mathbb{Z}\times\mathbb{Z}_+)$ is given as
\begin{equation}
P^{\{1\}}_{\bx,\bx'} 
= \left\{ \begin{array}{ll} 
A^{\{1\}}_{\bx'-\bx}, & \mbox{if $\bx\in\mathbb{Z}\times\{0\}$ and $\bx'-\bx\in\{-1,0,1\}\times\{0,1\}$}, \cr
A^{\{1,2\}}_{\bx'-\bx}, & \mbox{if $\bx\in\mathbb{Z}\times\mathbb{N}$ and $\bx'-\bx\in\{-1,0,1\}^2$}, \cr
O, & \mbox{otherwise},
\end{array} \right.
\end{equation}
where $\mathbb{N}$ is the set of all positive integers.
The process $\{\bY^{\{2\}}_n\}$ on $\mathbb{Z}_+\times\mathbb{Z}\times S_0$ and its transition probability matrix $P^{\{2\}}=(P^{\{2\}}_{\bx,\bx'};\bx,\bx'\in\mathbb{Z}_+\times\mathbb{Z})$ are analogously defined. The process $\{\bY^{\{1,2\}}_n\}$ is a Markov chain on $\mathbb{Z}^2\times S_0$, whose transition probability matrix $P^{\{1,2\}}=(P^{\{1,2\}}_{\bx,\bx'};\bx,\bx'\in\mathbb{Z}^2)$ is given as
\begin{equation}
P^{\{1,2\}}_{\bx,\bx'} 
= \left\{ \begin{array}{ll} 
A^{\{1,2\}}_{\bx'-\bx}, & \mbox{if $\bx'-\bx\in\{-1,0,1\}^2$}, \cr
O, & \mbox{otherwise}. 
\end{array} \right.
\end{equation}
Regarding $X^{\{1\}}_{1,n}$ as the additive part, we see that the process $\{\bY^{\{1\}}_n\}=\{(X^{\{1\}}_{1,n},(X^{\{1\}}_{2,n},J^{\{1\}}_n))\}$ is a Markov additive process (MA-process for short) with the background state $(X^{\{1\}}_{2,n},J^{\{1\}}_n)$ (see, for example, Ney and Nummelin \cite{Ney87}). The process $\{\bY^{\{2\}}_n\}=\{(X^{\{2\}}_{2,n},(X^{\{2\}}_{1,n},J^{\{2\}}_n))\}$ is also an MA-process, where $X^{\{2\}}_{2,n}$ is the additive part and $(X^{\{2\}}_{1,n},J^{\{2\}}_n)$ the background state, and $\{\bY^{\{1,2\}}_n\}=\{(X^{\{1,2\}}_{1,n},X^{\{1,2\}}_{2,n}),J^{\{1,2\}}_n)\}$ an MA-process, where $(X^{\{1,2\}}_{1,n},X^{\{1,2\}}_{2,n})$ the additive part and $J^{\{1,2\}}_n$ the background state. We call them the induced MA-processes derived from the original 2d-QBD process. Their background processes are called induced Markov chains in Fayolle et.\ al.\ \cite{Fayolle95}. 
%
Let $\{ \bar A^{\{1\}}_i; i\in\{-1,0,1\} \}$ be the Markov additive kernel (MA-kernel for short) of the induced MA-process $\{\bY^{\{1\}}_n\}$, which is the set of transition probability blocks and defined as, for $i\in\{-1,0,1\}$,  
\begin{align*}
& \bar A^{\{1\}}_i = \left( \bar A^{\{1\}}_{i,(x_2,x_2')}; x_2,x_2'\in\mathbb{Z}_+ \right),\\
&\bar A^{\{1\}}_{i,(x_2,x_2')} = \left\{ \begin{array}{ll}
 A^{\{1\}}_{i,x_2'-x_2}, & \mbox{if $x_2=0$ and $x_2'-x_2\in\{0,1\}$}, \cr
 A^{\{1,2\}}_{i,x_2'-x_2}, & \mbox{if $x_2\ge 1$ and $x_2'-x_2\in\{-1,0,1\}$}, \cr
 O, & \mbox{otherwise}. 
 \end{array} \right.
\end{align*}
Let $\{ \bar A^{\{2\}}_i; i\in\{-1,0,1\} \}$ be the MA-kernel of $\{\bY^{\{2\}}_n\}$, defined in the same manner. With respect to $\{\bY^{\{1,2\}}_n\}$, the MA-kernel is given by $\{ A^{\{1,2\}}_{i_1,i_2}; i_1,i_2\in\{-1,0,1\} \}$. 
We assume the following condition throughout the paper. 
\begin{assumption} \label{as:MAprocess_irreducible}
The induced MA-processes $\{\bY^{\{1\}}_n\}$, $\{\bY^{\{2\}}_n\}$ and $\{\bY^{\{1,2\}}_n\}$ are irreducible and aperiodic. 
\end{assumption}

Let $\bar A^{\{1\}}_*(z)$ and  $\bar A^{\{2\}}_*(z)$ be the matrix generating functions of the kernels of $\{\bY^{\{1\}}_n\}$ and $\{\bY^{\{2\}}_n\}$, respectively, defined as
\[
\bar A^{\{1\}}_*(z) = \sum_{i\in\{-1,0,1\}} z^i \bar A^{\{1\}}_i,\quad 
\bar A^{\{2\}}_*(z) = \sum_{i\in\{-1,0,1\}} z^i \bar A^{\{2\}}_i. 
\]
The matrix generating function of the kernel of $\{\bY^{\{1,2\}}_n\}$ is given by $A^{\{1,2\}}_{*,*}(z_1,z_2)$, defined as
\[
A^{\{1,2\}}_{*,*}(z_1,z_2) = \sum_{i_1,i_2\in\{-1,0,1\}} z_1^{i_1} z_2^{i_2} A^{\{1,2\}}_{i_1,i_2}. 
\]
Note that we use generating functions instead of moment generating functions in the paper because the generating functions are more suitable for complex analysis. 
Let $\Gamma^{\{1\}}$, $\Gamma^{\{2\}}$ and $\Gamma^{\{1,2\}}$ be regions in which  the convergence parameters of $\bar A^{\{1\}}_*(e^{\theta_1})$, $\bar A^{\{2\}}_*(e^{\theta_2})$ and $A^{\{1,2\}}_{*,*}(e^{\theta_1},e^{\theta_2})$ are greater than $1$, respectively, i.e., 
\begin{align*}
&\Gamma^{\{1\}} = \{(\theta_1,\theta_2)\in\mathbb{R}^2; \cp(\bar A^{\{1\}}_*(e^{\theta_1}))>1 \},\quad 
\Gamma^{\{2\}} = \{(\theta_1,\theta_2)\in\mathbb{R}^2; \cp(\bar A^{\{2\}}_*(e^{\theta_2}))>1 \}, \\
&\Gamma^{\{1,2\}} = \{(\theta_1,\theta_2)\in\mathbb{R}^2; \cp(A^{\{1,2\}}_{*,*}(e^{\theta_1},e^{\theta_2}))>1 \}. 
\end{align*}
By Lemma A.1 of  Ozawa \cite{Ozawa21}, $\cp(\bar A^{\{1\}}_*(e^\theta))^{-1}$ and $\cp(\bar A^{\{2\}}_*(e^\theta))^{-1}$ are log-convex in $\theta$, and the closures of $\Gamma^{\{1\}}$ and $\Gamma^{\{2\}}$ are convex sets; $\cp(\bar A^{\{1,2\}}_*(e^{\theta_1},e^{\theta_2}))^{-1}$ is also log-convex in $(\theta_1,\theta_2)$, and the closure of $\Gamma^{\{1,2\}}$ is a convex set. Furthermore, by Proposition B.1 of Ozawa \cite{Ozawa21}, $\Gamma^{\{1,2\}}$ is bounded under Assumption \ref{as:MAprocess_irreducible}. 

%
Assuming $\{\bY_n\}$ is positive recurrent (a condition for this will be given in the next section), we denote by $\bnu$ the stationary distribution of $\{\bY_n\}$, where $\bnu=(\bnu_{\bx},\bx\in\mathbb{Z}_+^2)$, $\bnu_{\bx}=(\nu_{(\bx,j)}, j\in S_0)$ and $\nu_{(\bx,j)}$ is the stationary probability that the 2d-QBD process is in the state $(\bx,j)$ in steady state. 
Let $\bc=(c_1,c_2)\in\mathbb{N}^2$ be an arbitrary discrete direction vector and, for $i\in\{1,2\}$, define a real value $\theta_{\bc,i}^\dagger$ as
\begin{equation}
\theta_{\bc,i}^\dagger = \sup\{\langle \bc,\btheta \rangle; \btheta\in\Gamma^{\{i\}}\cap\Gamma^{\{1,2\}}\},  
\label{eq:theta_bcd0}
\end{equation}
where $\langle \ba,\bb \rangle$ is the inner product of vectors $\ba$ ad $\bb$. Our main aim is to demonstrate under certain conditions that, for any $j\in S_0$, 
\begin{equation}
\lim_{k\to\infty} \frac 1 k \log \nu_{(k \bc,j)} = -\min\{\theta_{\bc,1}^\dagger,\,\theta_{\bc,2}^\dagger\},
\label{eq:asympc_eq0}
\end{equation}
i.e., the asymptotic decay rate of the stationary distribution in direction $\bc$ is given by the smaller of $\theta_{\bc,1}^\dagger$ and $\theta_{\bc,2}^\dagger$. We also present a condition ensuring the sequence $\{\nu_{(k \bc,j)}\}_{k\ge 0}$ geometrically decays without power terms. We prove them by using the matrix analytic method of the queueing theory as well as the complex analytic method; the former has been introduced by Marcel Neuts and developed in the literature; See, for example, Refs.\ \cite{Bini05,Latouche99,Neuts94,Neuts89}.
Our model is a kind of multidimensional reflecting process, and asymptotics in various multidimensional reflecting processes have been investigated in the literature for several decades; See Miyazawa \cite{Miyazawa11} and references therein. 
$0$-partially homogeneous ergodic Markov chains discussed in Borovkov and Mogul'ski\u\i\ \cite{Borovkov01} are Markov chains on the positive quadrant including 2d-RRWs as a special case. For those Markov chains, a formula corresponding to \eqref{eq:asympc_eq0} have been obtained by using the large deviations techniques; See Theorem 3.1 of Borovkov and Mogul'ski\u\i\ \cite{Borovkov01} and also see Proposition 5.1 of Miyazawa \cite{Miyazawa09} for the case of 2d-RRW. They have considered only models without background processes. 
In Dai and Miyazawa \cite{Dai13}, results parallel to ours have been obtained for a two-dimensional continuous-state Markov process, named semimartingale-reflecting Brownian motion (SRBM for short). The 2d-SRBM is also a model without background processes. 
With respect to models with a background process, asymptotics of the stationary distribution in a Markov modulated fluid network with a finite number of stations have recently been studied in Miyazawa \cite{Miyazawa21}, where upper and lower bounds for the stationary tail decay rate in various directions were obtained by using so-called Dynkin's formula. 

%
The rest of the paper is organized as follows. In Section \ref{sec:preliminary}, we give a stability condition for the 2d-QBD process and define the asymptotic decay rates of the stationary distribution. In the same section, we introduce a key formula representing the stationary distribution in terms of the fundamental (potential) matrix of the induced MA-process $\{\bY_n^{\{1,2\}}\}$. We call it a compensation equation. Furthermore, we define block state processes derived from the original 2d-QBD process, which will be used for proving propositions in the following sections.  A summary of their properties is given in Appendix \ref{sec:block_2dQBD_results}. 
In Section \ref{sec:asymptotics}, we obtain the asymptotic decay rate of the stationary distribution in any direction. First, we obtain it in the case where the direction vector is given by $\bc=(1,1)$. The asymptotic decay rate for a general direction vector is obtained from the results in the case of $\bc=(1,1)$, by using the block state process. 
In Section \ref{sec:discussion}, we explain a geometric property of the asymptotic decay rates and give an example of two-queue model. In the two-queue model, the asymptotic decay rate corresponds to the decreasing rate of the joint queue length probability in steady state when the queue lengths of both the queues simultaneously enlarge. 
The paper concludes with a remark about the relation between our analysis and the large deviation techniques  in Section \ref{sec:conclusion}.

\medskip
\textit{Notation for vectors and matrices.} 
%
%
%
%
For a matrix $A$, we denote by $[A]_{i,j}$ the $(i,j)$-entry of $A$ and by $A^\top$ the transpose of $A$.  If $A=(a_{i,j})$, $|A|=(|a_{i,j}|)$. Similar notations are also used for vectors. 
The convergence parameter of a nonnegative square matrix $A$ with a finite or countable dimension is denoted by $\cp(A)$, i.e., $\cp(A) = \sup\{r\in\mathbb{R}_+; \sum_{n=0}^\infty r^n A^n<\infty,\ \mbox{entry-wise} \}$. 
For a finite square matrix $A$, we denote by $\spr(A)$ the spectral radius of $A$, which is the maximum modulus of eigenvalue of $A$. If $A$ is nonnegative, $\spr(A)$ corresponds to the Perron-Frobenius eigenvalue of $A$ and we have $\spr(A)=\cp(A)^{-1}$. 
$O$ is a matrix of $0$'s, $\bone$ is a column vector of $1$'s and $\bzero$ is a column vector of $0$'s; their dimensions, which are finite or countably infinite, are determined in context. $I$ is the identity matrix. 
For an $n_1\times n_2$ matrix $A=(a_{i,j})$, $\vec(A)$ is the vector of stacked columns of $A$, i.e., 
$\vec(A)=(a_{1,1},\cdots,a_{n_1,1},a_{1,2},\cdots,a_{n_1,2},\cdots,a_{1,n_2},\cdots,a_{n_1,n_2})^\top$.

%
%
\section{Preliminaries} \label{sec:preliminary}

%
%
\subsection{Stability condition}

Let $a^{\{1\}}$, $a^{\{2\}}$ and $\ba^{\{1,2\}}$ be the mean drifts of the additive part in the induced MA-processes $\{\bY^{\{1\}}_n\}$, $\{\bY^{\{2\}}_n\}$ and $\{\bY^{\{1,2\}}_n\}$, respectively, i.e., 
\[
a^{\{i\}} = \lim_{n\to\infty} \frac{1}{n} \sum_{k=1}^n (X^{\{i\}}_{i,k}-X^{\{i\}}_{i,k-1}),\ i=1,2,\quad
\ba^{\{1,2\}} = \lim_{n\to\infty} \frac{1}{n} \sum_{k=1}^n (\bX^{\{1,2\}}_k-\bX^{\{1,2\}}_{k-1}), 
\]
where $\ba^{\{1,2\}}=(a^{\{1,2\}}_1,a^{\{1,2\}}_2)$. By Corollary 3.1 of Ozawa \cite{Ozawa19}, the stability condition of the 2d-QBD process $\{\bY_n\}$ is given as follows:
\begin{lemma} \label{le:stability_cond}
\begin{itemize}
\item[(i)] In the case where $a^{\{1,2\}}_1<0$ and $a^{\{1,2\}}_2<0$, the 2d-QBD process $\{\bY_n\}$ is positive recurrent if $a^{\{1\}}<0$ and $a^{\{2\}}<0$, and it is transient if either $a^{\{1\}}>0$ or $a^{\{2\}}>0$. 
\item[(ii)] In the case where $a^{\{1,2\}}_1\ge 0$ and $a^{\{1,2\}}_2<0$, $\{\bY_n\}$ is positive recurrent if $a^{\{1\}}<0$, and it is transient if $a^{\{1\}}>0$. 
\item[(iii)] In the case where $a^{\{1,2\}}_1<0$ and $a^{\{1,2\}}_2\ge 0$, $\{\bY_n\}$ is positive recurrent if $a^{\{2\}}<0$, and it is transient if $a^{\{2\}}>0$. 
\item[(iv)] If one of $a^{\{1,2\}}_1$ and $a^{\{1,2\}}_2$ is positive and the other is non-negative, then $\{\bY_n\}$ is transient.
\end{itemize}
\end{lemma}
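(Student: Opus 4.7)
The plan is to invoke Corollary~3.1 of Ozawa \cite{Ozawa19}, which gives exactly this statement; below I sketch the underlying strategy one would reproduce. The overall approach is a case-by-case combination of Foster--Lyapunov arguments for positive recurrence and coupling with the induced MA-processes for transience, exploiting the fact that $\ba^{\{1,2\}}$ controls the mean behaviour of $\bX_n$ in the interior $\mathbb{B}^{\{1,2\}}$, while $a^{\{1\}}$ and $a^{\{2\}}$ control it near the two axes through the induced processes $\{\bY^{\{1\}}_n\}$ and $\{\bY^{\{2\}}_n\}$ (whose background distributions implicitly average over the boundary-layer dynamics).

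For Case (i), with $\ba^{\{1,2\}}$ pointing strictly into the negative quadrant, I would employ a test function of the form $V(\bx,j)=c_1 x_1 + c_2 x_2 + g(j)$ with positive constants $c_1,c_2$ and a bounded correction $g$ absorbing the background modulation. In the interior $\mathbb{B}^{\{1,2\}}$ the one-step mean drift of $V$ equals $c_1 a^{\{1,2\}}_1 + c_2 a^{\{1,2\}}_2$; on each axis $\mathbb{B}^{\{i\}}$ it is dominated, after averaging over the stationary background of the associated induced process, by $c_i a^{\{i\}}$. Calibrating $c_1,c_2$ under $a^{\{1\}}<0$ and $a^{\{2\}}<0$ makes the drift uniformly negative outside a finite set and Foster's criterion triggers positive recurrence. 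Conversely, when some $a^{\{i\}}>0$, I would couple the 2d-QBD while it visits $\mathbb{B}^{\{i\}}$ with $\{\bY^{\{i\}}_n\}$, whose additive part drifts to $+\infty$ by the SLLN, producing escape to infinity with positive probability and hence transience. Case (iv) is immediate from the SLLN applied to the additive part of $\{\bY^{\{1,2\}}_n\}$: since one coordinate of $\ba^{\{1,2\}}$ is strictly positive and the other non-negative, $\bX_n\to\infty$ along at least one coordinate almost surely, precluding positive recurrence.

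Cases (ii) and (iii) are symmetric; I focus on (ii). Because $a^{\{1,2\}}_2<0$, the process returns to some strip $\{x_2\le N\}$ infinitely often and spends an asymptotically positive fraction of time there, so its long-run drift in $x_1$ is essentially $a^{\{1\}}$, the drift of the additive part of $\{\bY^{\{1\}}_n\}$ under its stationary background distribution. Under $a^{\{1\}}<0$ this gives positive recurrence via a Lyapunov function dominated by $x_1$, with $x_2$ controlled separately by the negative interior drift; under $a^{\{1\}}>0$ the coupling argument from Case (i) again forces escape along the $x_1$-axis.

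The hard part is Cases (ii) and (iii): a naive linear Lyapunov function in $(x_1,x_2)$ is unavailable, since one coordinate of $\ba^{\{1,2\}}$ is non-negative and the interior drift alone cannot dominate. The correct device, executed in detail in Ozawa \cite{Ozawa19}, is to change frame to the induced MA-process $\{\bY^{\{1\}}_n\}$ (respectively $\{\bY^{\{2\}}_n\}$), weight drifts by its stationary background distribution, and reduce the two-dimensional stability question to a one-dimensional drift comparison. Verifying that this reduction accurately controls the original 2d-QBD---and not merely the induced process on the half-plane---is where the matrix-analytic structure of the blocks $\{A^\alpha_{i_1,i_2}\}$ is used essentially, and it is the step that would demand the most care if one were to reproduce the argument from scratch.
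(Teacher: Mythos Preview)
Your proposal is correct and matches the paper's treatment exactly: the paper does not prove this lemma but simply cites Corollary~3.1 of Ozawa \cite{Ozawa19}, precisely as you do in your opening sentence. Your additional Foster--Lyapunov and coupling sketch is supplementary material that the paper itself omits entirely.
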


Each mean drift is represented in terms of the stationary distribution of the corresponding induced Markov chain, i.e., the background process of the corresponding induced MA-process; for their expressions, see Subsection 3.1 of Ozawa \cite{Ozawa19} and its related parts. 
We assume the following condition throughout the paper.
\begin{assumption} \label{as:2dQBD_stable}
The condition in Lemma \ref{le:stability_cond} that ensures the 2d-QBD process $\{\bY_n\}$ is positive recurrent holds.
\end{assumption}

%
\subsection{Compensation equation}

Consider the induced MA-process $\{\bY^{\{1,2\}}_n\}$ on $\mathbb{Z}^2\times S_0$. Its transition probability matrix is given by $P^{\{1,2\}}=(P^{\{1,2\}}_{\bx,\bx'};\bx,\bx'\in\mathbb{Z}^2)$. Denote by $\Phi^{\{1,2\}}=(\Phi^{\{1,2\}}_{\bx,\bx'};\bx,\bx'\in\mathbb{Z}^2)$ the fundamental matrix (potential matrix) of $P^{\{1,2\}}$, i.e., $\Phi^{\{1,2\}}=\sum_{n=0}^\infty (P^{\{1,2\}})^n$.
Under Assumption \ref{as:2dQBD_stable}, since at least one element of the mean drift vector of $\{\bY^{\{1,2\}}_n\}$, $a^{\{1,2\}}_1$ or $a^{\{1,2\}}_2$, is negative, $\Phi^{\{1,2\}}$ is entry-wise finite. 
Since the transition probabilities of $\{\bY^{\{1,2\}}_n\}$ are space-homogeneous with respect to the additive part, we have for every $\bx,\bx'\in\mathbb{Z}^2$ and for every $\bl\in\mathbb{Z}^2$ that 
\begin{equation}
\Phi^{\{1,2\}}_{\bx,\bx'}=\Phi^{\{1,2\}}_{\bx-\bl,\bx'-\bl}. 
\label{eq:Phixx_eq}
\end{equation}
Furthermore, $\Phi^{\{1,2\}}$ satisfies the following property. 
\begin{proposition} \label{pr:Phi12_bounded}
$\Phi^{\{1,2\}}$ is entry-wise bounded. 
%
%
\end{proposition}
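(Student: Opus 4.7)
The plan is to reduce the two-dimensional boundedness claim to a one-dimensional one by projecting onto a coordinate with strictly negative mean drift, and then to bound the resulting one-dimensional potential matrix via a first-entrance decomposition. By the shift invariance \eqref{eq:Phixx_eq}, $\Phi^{\{1,2\}}_{\bx,\bx'}$ depends only on $\bk:=\bx'-\bx$; writing $\Psi(\bk):=\Phi^{\{1,2\}}_{\bzero,\bk}$, the claim reduces to $\sup_{\bk\in\mathbb{Z}^2}\Psi(\bk)<\infty$ entry-wise. Under Assumption \ref{as:2dQBD_stable}, at least one of $a^{\{1,2\}}_1$ and $a^{\{1,2\}}_2$ is strictly negative; without loss of generality take $a^{\{1,2\}}_2<0$. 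By space-homogeneity of $\{\bY^{\{1,2\}}_n\}$ in the first coordinate, the pair $(X^{\{1,2\}}_{2,n},J^{\{1,2\}}_n)$ is itself a Markov chain on $\mathbb{Z}\times S_0$, namely a skip-free one-dimensional MA-process with finite background $S_0$, MA-kernel $\sum_{i_1\in\{-1,0,1\}}A^{\{1,2\}}_{i_1,i_2}$, and mean drift $a^{\{1,2\}}_2<0$. Let $\Psi^{(2)}(k)$ denote its potential matrix; since entries of $\Psi$ are non-negative and $\Psi^{(2)}(k_2)=\sum_{k_1\in\mathbb{Z}}\Psi(k_1,k_2)$ entry-wise, it suffices to prove $\sup_{k\in\mathbb{Z}}\Psi^{(2)}(k)<\infty$ entry-wise.

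For the one-dimensional bound, I would apply a first-entrance decomposition. For $k\in\mathbb{Z}$, let $T_k$ be the first hitting time of level $k$ in the projected chain and let $F_k$ be the matrix with $(j,j'')$-entry $\mathbb{P}_{(0,j)}(T_k<\infty,\,J_{T_k}=j'')$. By the strong Markov property at $T_k$ together with the space-homogeneity of the projection, $\Psi^{(2)}(k)=F_k\,\Psi^{(2)}(0)$. Each $F_k$ is sub-stochastic, so its entries lie in $[0,1]$; hence $[\Psi^{(2)}(k)]_{j,j'}\le\sum_{j''\in S_0}[\Psi^{(2)}(0)]_{j'',j'}$ uniformly in $k$. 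Finally, $\Psi^{(2)}(0)$ is entry-wise finite because the projected chain is transient: from $a^{\{1,2\}}_2<0$ and the law of large numbers for MA-processes with finite background, $X^{\{1,2\}}_{2,n}\to-\infty$ almost surely, so level $0$ is visited only finitely many times.

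The technical point I expect to be the most delicate is the identity $\Psi^{(2)}(k)=F_k\,\Psi^{(2)}(0)$ when the projected chain is not assumed irreducible on $\mathbb{Z}\times S_0$; this is, however, the standard matrix-analytic renewal decomposition for a skip-free level process and can be carried out entry-by-entry without any irreducibility. An alternative route would bypass projection and use the matrix generating function directly: for any $\btheta\in\Gamma^{\{1,2\}}$ one has $\spr(A^{\{1,2\}}_{*,*}(e^{\theta_1},e^{\theta_2}))<1$, whence convergence of the matrix series $\sum_{\bk}e^{\langle\btheta,\bk\rangle}\Psi(\bk)$ yields directional exponential bounds on $\Psi(\bk)$; combining such bounds for a few $\btheta$'s whose positive cone covers all directions would then force uniform boundedness. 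The projection argument is, however, more elementary and better suited to the setting here, so I would prefer it.
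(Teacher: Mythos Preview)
Your argument is correct, but it takes a longer route than the paper's. The paper applies the first-entrance decomposition directly to the two-dimensional process: by shift invariance it suffices to bound $\sup_{\bx}[\Phi^{\{1,2\}}_{\bx,\bzero}]_{j,j'}$, and the strong Markov property at the first hitting time of the single state $(\bzero,j')$ gives $[\Phi^{\{1,2\}}_{\bx,\bzero}]_{j,j'}\le[\Phi^{\{1,2\}}_{\bzero,\bzero}]_{j',j'}$, which is already known to be finite from the paragraph preceding the proposition. No projection, no one-dimensional LLN, no separate transience argument is needed.

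Your detour through the projected chain $(X^{\{1,2\}}_{2,n},J^{\{1,2\}}_n)$ is sound---the projection is Markov by space-homogeneity in the first coordinate, the identity $\Psi^{(2)}(k)=F_k\Psi^{(2)}(0)$ is exactly the matrix first-entrance decomposition, and finiteness of $\Psi^{(2)}(0)$ follows from $X^{\{1,2\}}_{2,n}\to-\infty$ via the standard renewal dichotomy---and it even yields the stronger statement that $\sup_{k_2}\sum_{k_1}\Psi(k_1,k_2)<\infty$. But this extra strength is not used anywhere, and the argument reintroduces work (the LLN and the transience of the projection) that the paper already absorbed into the sentence ``$\Phi^{\{1,2\}}$ is entry-wise finite'' just before the proposition. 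The paper's proof is essentially the same first-entrance idea you use, only applied one level up, which makes it a two-line argument.
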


\begin{proof}
By \eqref{eq:Phixx_eq}, it suffices to show that, for every $j,j'\in S_0$,
\begin{equation}
\sup_{\bx\in\mathbb{Z}^2} [\Phi^{\{1,2\}}_{\bx,\bzero}]_{j,j'} < \infty,  
\label{eq:Phi12xzer0_bounded}
\end{equation}
where we use the fact that $S_0$ is finite. Let $\tau(j')$ be the first hitting time of $\{\bY^{\{1,2\}}_n\}$ to the state $(0,0,j')$, i.e., 
\[
\tau(j') = \inf\{n\ge 0; \bY^{\{1,2\}}_n=(0,0,j')\}. 
\]
Since $\tau(j')$ is a stopping time, we have by the strong Markov property of $\{\bY^{\{1,2\}}_n\}$ that
\begin{align}
[\Phi^{\{1,2\}}_{\bx,\bzero}]_{j,j'} 
&= \sum_{k=0}^\infty \mathbb{E}\!\left( \sum_{n=0}^\infty 1(\bY^{\{1,2\}}_n=(0,0,j')) \,\Big|\, \tau(j')=k, \bY^{\{1,2\}}_0=(\bx,j) \right) \cr
&\qquad\qquad \cdot \mathbb{P}(\tau(j')=k\,|\,\bY^{\{1,2\}}_0=(\bx,j)) \cr
&= \mathbb{E}\!\left( \sum_{n=0}^\infty 1(\bY^{\{1,2\}}_n=(0,0,j')) \,\Big|\, \bY^{\{1,2\}}_0=(0,0,j') \right) \mathbb{P}(\tau(j')<\infty\,|\,\bY^{\{1,2\}}_0=(\bx,j)) \cr
&\le [\Phi^{\{1,2\}}_{\bzero,\bzero}]_{j',j'}.
\end{align}
Since $\Phi^{\{1,2\}}_{\bzero,\bzero}$ is entry-wise finite, this implies inequality \eqref{eq:Phi12xzer0_bounded}. 
\end{proof}

\begin{remark} \label{rm:Phi12_bounded}
From the proof of the proposition, we see that, for every $(\bx,j),(\bx',j')\in\mathbb{Z}^2\times S_0$, 
\begin{equation}
[\Phi^{\{1,2\}}_{\bx,\bx'}]_{j,j'}\le \max_{j''\in S_0}\, [\Phi^{\{1,2\}}_{\bzero,\bzero}]_{j'',j''}. 
\end{equation}
\end{remark}

%
From $\{\bY^{\{1,2\}}_n\}$, we construct another Markov chain on $\mathbb{Z}^2\times S_0$, denoted by $\{\tilde{\bY}^{\{1,2\}}_n\}$, by replacing the transition probabilities from the states in $\mathbb{B}^\emptyset\cup\mathbb{B}^{\{1\}}\cup\mathbb{B}^{\{2\}}$ with those of the original 2d-QBD process. 
To be precise, the transition probability matrix of $\{\tilde{\bY}^{\{1,2\}}_n\}$, denoted by $\tilde{P}^{\{1,2\}}=(\tilde{P}^{\{1,2\}}_{\bx,\bx'};\bx,\bx'\in\mathbb{Z}^2)$, is given as
\[
\tilde{P}^{\{1,2\}}_{\bx,\bx'}
= \left\{ \begin{array}{ll}
A^\alpha_{\bx'-\bx}, & \mbox{$\bx\in\mathbb{B}^\alpha$ for some $\alpha\in\{\{1\},\{2\},\emptyset\}$ and $\bx'-\bx\in\{-1,0,1\}^2$}, \cr
A^{\{1,2\}}_{\bx'-\bx}, & \mbox{$\bx\notin\mathbb{B}^\alpha$ for any $\alpha\in\{\{1\},\{2\},\emptyset\}$ and $\bx'-\bx\in\{-1,0,1\}^2$}, \cr
O, & \mbox{otherwise}.
\end{array} \right.
\]
The subspace $\mathbb{Z}_+^2\times S_0$ is a unique closed communication class (irreducible class) of the Markov chain $\{\tilde{\bY}^{\{1,2\}}_n\}$ and its stationary distribution, $\tilde{\bnu}=(\tilde{\bnu}_{\bx},\bx\in\mathbb{Z}^2)$, is given as
\[
\tilde{\bnu}_{\bx}=
\left\{ \begin{array}{ll}
\bnu_{\bx}, & \mbox{$\bx\in\mathbb{Z}_+^2$}, \cr
\bzero^\top, &  \mbox{otherwise}, \end{array} \right.
\]
where $\bnu=(\bnu_{\bx},\bx\in\mathbb{Z}_+^2)$ is the stationary distribution of the original 2d-QBD process. The stationary distribution $\tilde{\bnu}$ satisfies the stationary equation $\tilde{\bnu} \tilde{P}^{\{1,2\}}=\tilde{\bnu}$. We have the following.
\begin{proposition} \label{pr:tildenuPhi12_finite}
Under Assumption \ref{as:2dQBD_stable}, $\tilde{\bnu} \Phi^{\{1,2\}}$ is elementwise bounded. 
\end{proposition}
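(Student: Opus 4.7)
The plan is to reduce the proposition to the uniform boundedness of $\Phi^{\{1,2\}}$ already established in Proposition \ref{pr:Phi12_bounded} (more precisely, the uniform bound exhibited in Remark \ref{rm:Phi12_bounded}). The key observation is that $\tilde{\bnu}$ is a probability distribution: under Assumption \ref{as:2dQBD_stable}, $\{\bY_n\}$ is positive recurrent, so its stationary distribution $\bnu$ satisfies $\sum_{\bx\in\mathbb{Z}_+^2} \bnu_{\bx}\bone = 1$, and by construction $\tilde{\bnu}_{\bx} = \bnu_{\bx}$ on $\mathbb{Z}_+^2$ and $\tilde{\bnu}_{\bx}=\bzero^\top$ otherwise, hence $\sum_{(\bx,j)\in\mathbb{Z}^2\times S_0} [\tilde{\bnu}_{\bx}]_j = 1$.

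Next, I would invoke Remark \ref{rm:Phi12_bounded} to obtain the uniform bound
\[
M := \max_{j''\in S_0}[\Phi^{\{1,2\}}_{\bzero,\bzero}]_{j'',j''} < \infty,
\qquad
[\Phi^{\{1,2\}}_{\bx,\bx'}]_{j,j'} \le M \ \text{for all } (\bx,j),(\bx',j')\in\mathbb{Z}^2\times S_0.
\]
Since all terms are nonnegative, one may exchange the order of summation freely, and for any $(\bx',j')$ one gets
\[
[\tilde{\bnu}\,\Phi^{\{1,2\}}]_{(\bx',j')} \;=\; \sum_{(\bx,j)\in\mathbb{Z}^2\times S_0} [\tilde{\bnu}_{\bx}]_j\,[\Phi^{\{1,2\}}_{\bx,\bx'}]_{j,j'} \;\le\; M\sum_{(\bx,j)}[\tilde{\bnu}_{\bx}]_j \;=\; M,
\]
which is the desired uniform bound.

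There is essentially no obstacle once Proposition \ref{pr:Phi12_bounded}/Remark \ref{rm:Phi12_bounded} is in hand; the only thing to be careful about is making sure that $\tilde{\bnu}$ is genuinely a probability measure on the full state space $\mathbb{Z}^2\times S_0$ (so that the bound becomes $M\cdot 1$ and not something that could diverge). This is guaranteed by the positive recurrence of $\{\bY_n\}$ and by the fact that $\tilde{\bnu}$ is explicitly the zero-extension of $\bnu$ to the rest of $\mathbb{Z}^2$. The same argument in fact shows that every entry of $\tilde{\bnu}\,\Phi^{\{1,2\}}$ is bounded by the single constant $M$, independently of $(\bx',j')$.
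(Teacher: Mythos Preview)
Your proposal is correct and follows essentially the same argument as the paper: both use that $\tilde{\bnu}$ is a probability distribution and invoke the uniform bound $M=\max_{j''\in S_0}[\Phi^{\{1,2\}}_{\bzero,\bzero}]_{j'',j''}$ from Remark \ref{rm:Phi12_bounded} to conclude $[\tilde{\bnu}\,\Phi^{\{1,2\}}]_{(\bx',j')}\le M$. The paper's proof is simply a terser version of yours.
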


\begin{proof}
Since $\tilde{\bnu}$ is a probability distribution, by Remark \ref{rm:Phi12_bounded}, we have for any $(\bx,j)\in\mathbb{Z}^2\times S_0$ that 
\begin{equation}
[\tilde{\bnu} \Phi^{\{1,2\}}]_{(\bx,j)} \le \max_{j''\in S_0}\, [\Phi^{\{1,2\}}_{\bzero,\bzero}]_{j'',j''}. 
\end{equation}
Hence, the assertion of the proposition holds. 
\end{proof}

Since $\Phi^{\{1,2\}}$ is entry-wise finite, we have 
\begin{equation}
(I-P^{\{1,2\}}) \Phi^{\{1,2\}} = \Phi^{\{1,2\}} (I-P^{\{1,2\}}) = I.
\end{equation}
By Proposition \ref{pr:tildenuPhi12_finite}, we obtain the following.
\begin{lemma}
\begin{equation}
\tilde{\bnu}=\tilde{\bnu}(\tilde{P}^{\{1,2\}}-P^{\{1,2\}})\Phi^{\{1,2\}}. 
\label{eq:compensation_eq}
\end{equation}
\end{lemma}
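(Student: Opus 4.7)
The plan is to derive the compensation equation in two steps: first turn the stationarity of $\tilde{\bnu}$ under $\tilde{P}^{\{1,2\}}$ into an identity involving $I-P^{\{1,2\}}$, then post-multiply by $\Phi^{\{1,2\}}$ and use the fundamental-matrix identity $\Phi^{\{1,2\}}(I-P^{\{1,2\}})=(I-P^{\{1,2\}})\Phi^{\{1,2\}}=I$. Because the matrices are of countably infinite order and $I-P^{\{1,2\}}$ is not nonnegative, the central issue is justifying associativity of the infinite matrix products; this is exactly where Proposition \ref{pr:tildenuPhi12_finite} is needed.

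Concretely, first I would write the stationary equation as $\tilde{\bnu}(I-\tilde{P}^{\{1,2\}})=\bzero^\top$ and rearrange to obtain $\tilde{\bnu}(I-P^{\{1,2\}})=\tilde{\bnu}(\tilde{P}^{\{1,2\}}-P^{\{1,2\}})$. Both sides are well-defined coordinate-wise: the right-hand side is a finite signed vector (entries corresponding to states not on the axes vanish, since $\tilde{P}^{\{1,2\}}$ and $P^{\{1,2\}}$ agree off $\mathbb{B}^\emptyset\cup\mathbb{B}^{\{1\}}\cup\mathbb{B}^{\{2\}}$), and the left-hand side is the difference of two absolutely convergent row vectors.

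Next I would post-multiply by $\Phi^{\{1,2\}}$ and split
\[
\tilde{\bnu}(\tilde{P}^{\{1,2\}}-P^{\{1,2\}})\Phi^{\{1,2\}}
=\tilde{\bnu}\tilde{P}^{\{1,2\}}\Phi^{\{1,2\}}-\tilde{\bnu}P^{\{1,2\}}\Phi^{\{1,2\}}.
\]
For the first term, $\tilde{\bnu}\tilde{P}^{\{1,2\}}=\tilde{\bnu}$ (stationarity) so it equals $\tilde{\bnu}\Phi^{\{1,2\}}$, which is entry-wise finite by Proposition \ref{pr:tildenuPhi12_finite}. For the second term, I would use the identity $P^{\{1,2\}}\Phi^{\{1,2\}}=\Phi^{\{1,2\}}-I$, combined with the fact that $\tilde{\bnu}\Phi^{\{1,2\}}$ is finite, to conclude $\tilde{\bnu}P^{\{1,2\}}\Phi^{\{1,2\}}=\tilde{\bnu}\Phi^{\{1,2\}}-\tilde{\bnu}$. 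Subtracting gives $\tilde{\bnu}$, which is the claim.

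The main obstacle is the associativity step $\tilde{\bnu}(P^{\{1,2\}}\Phi^{\{1,2\}})=(\tilde{\bnu}P^{\{1,2\}})\Phi^{\{1,2\}}$: for a general row vector this would need not hold, but here both $\tilde{\bnu}\Phi^{\{1,2\}}$ and $\tilde{\bnu}P^{\{1,2\}}\Phi^{\{1,2\}}$ (bounded by $\tilde{\bnu}\Phi^{\{1,2\}}$ entry-wise because $P^{\{1,2\}}$ is substochastic on the additive coordinates) are entry-wise finite, so a Tonelli/Fubini argument applied coordinate-by-coordinate on the nonnegative series $\sum_{n\ge 0}\tilde{\bnu}(P^{\{1,2\}})^{n+1}$ legitimizes the rearrangement. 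Once this is in place, the remaining algebra is immediate and yields \eqref{eq:compensation_eq}.
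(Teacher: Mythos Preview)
Your proposal is correct and follows essentially the same route as the paper's proof: both invoke Proposition~\ref{pr:tildenuPhi12_finite} to justify (via Fubini/Tonelli on nonnegative series) the associativity needed, then combine stationarity $\tilde{\bnu}\tilde{P}^{\{1,2\}}=\tilde{\bnu}$ with the fundamental-matrix identity $(I-P^{\{1,2\}})\Phi^{\{1,2\}}=I$ to obtain \eqref{eq:compensation_eq}. One minor imprecision: $P^{\{1,2\}}$ is stochastic, not substochastic, but this does not affect your Tonelli argument, since all three factors $\tilde{\bnu}$, $P^{\{1,2\}}$, $\Phi^{\{1,2\}}$ are nonnegative and $\tilde{\bnu}\Phi^{\{1,2\}}$ is finite.
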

\begin{proof}
By the Fubini's theorem, we have $\tilde{\bnu}\tilde{P}^{\{1,2\}}\Phi^{\{1,2\}}=\tilde{\bnu}\Phi^{\{1,2\}}<\infty$, elementwise. Hence,
\[
\tilde{\bnu}(\tilde{P}^{\{1,2\}}-P^{\{1,2\}}) \Phi^{\{1,2\}}
= \tilde{\bnu}(I-P^{\{1,2\}}) \Phi^{\{1,2\}}
=\tilde{\bnu}, 
\]
where $\tilde{\bnu}(I-P^{\{1,2\}}) \Phi^{\{1,2\}}$ corresponds to a Riesz decomposition of $\tilde{\bnu}$ in the case where the harmonic function term is equivalent to zero; See Theorem 3.1 of Nummelin \cite{Nummelin84}. 
\end{proof}

Equation \eqref{eq:compensation_eq} can also be derived by the compensation method discussed in Keilson \cite{Keilson79}. We, therefore, call it a compensation equation. Its remarkable point is that the nonzero entries of $\tilde{P}^{\{1,2\}}-P^{\{1,2\}}$ are restricted to the transition probabilities from the states in $\mathbb{B}^\emptyset\cup\mathbb{B}^{\{1\}}\cup\mathbb{B}^{\{2\}}$.
Hence, we immediately obtain, for $\bx\in\mathbb{Z}^2$, 
\begin{align}
\tilde{\bnu}_{\bx} 
&= \sum_{i_1,i_2\in\{-1,0,1\}} \bnu_{(0,0)} (A^\emptyset_{i_1,i_2}-A^{\{1,2\}}_{i_1,i_2}) \Phi^{\{1,2\}}_{(i_1,i_2),\bx} \cr
&\qquad +\sum_{k=1}^\infty\ \sum_{i_1,i_2\in\{-1,0,1\}} \bnu_{(k,0)} (A^{\{1\}}_{i_1,i_2}-A^{\{1,2\}}_{i_1,i_2}) \Phi^{\{1,2\}}_{(k+i_1,i_2),\bx} \cr
&\qquad +\sum_{k=1}^\infty\ \sum_{i_1,i_2\in\{-1,0,1\}} \bnu_{(0,k)} (A^{\{2\}}_{i_1,i_2}-A^{\{1,2\}}_{i_1,i_2}) \Phi^{\{1,2\}}_{(i_1,k+i_2),\bx},
\label{eq:tildenux}
\end{align}
where any $A^\alpha_{i_1,i_2}$ corresponding to impossible transitions is assumed to be zero. Equation \eqref{eq:tildenux} plays a crucial role in the following section.

%
%
\subsection{Asymptotic decay rates} \label{sec:decay_rate}

Let $\bc=(c_1,c_2)\in\mathbb{N}^2$ be an arbitrary discrete direction vector. For $(\bx,j)\in\mathbb{Z}_+^2\times S_0$, define lower  and upper asymptotic decay rates $\underline{\xi}_{\bc}(\bx,j)$ and $\bar\xi_{\bc}(\bx,j)$ as 
\[
\underline{\xi}_{\bc}(\bx,j) = -\limsup_{k\to\infty} \frac{1}{k} \log \nu_{(\bx+k\bc,j)},\quad 
\bar\xi_{\bc}(\bx,j) = -\liminf_{k\to\infty} \frac{1}{k} \log \nu_{(\bx+k\bc,j)}.
\]
By the Cauchy-Hadamard theorem, the radius of convergence of the power series of the sequence $\{\nu_{(\bx+k\bc,j)}\}_{k\ge 0}$ is given by $e^{\underline{\xi}_{\bc}(\bx,j)}$. If $\underline{\xi}_{\bc}(\bx,j)=\bar\xi_{\bc}(\bx,j)$, we denote them by $\xi_{\bc}(\bx,j)$ and call it the asymptotic decay rate. 
Under Assumption \ref{as:MAprocess_irreducible}, the following property holds.
\begin{proposition} \label{pr:xi_equality}
For every $(\bx,j),(\bx',j')\in\mathbb{N}^2\times S_0$, $\underline{\xi}_{\bc}(\bx,j)=\underline{\xi}_{\bc}(\bx',j')$ and $\bar\xi_{\bc}(\bx,j)=\bar\xi_{\bc}(\bx',j')$. 
\end{proposition}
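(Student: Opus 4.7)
The plan is to exploit spatial homogeneity of $\{\bY_n\}$ in the open quadrant $\mathbb{B}^{\{1,2\}}$ together with the irreducibility of the induced MA-process $\{\bY_n^{\{1,2\}}\}$ (Assumption \ref{as:MAprocess_irreducible}) in order to sandwich $\nu_{(\bx'+k\bc,j')}$ and $\nu_{(\bx+k\bc,j)}$ by multiplicative constants that do not depend on $k$. Once such two-sided bounds are in hand, the logarithmic normalisation $k^{-1}\log$ washes the constants out and the four associated $\limsup$/$\liminf$ quantities must coincide.

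First, by irreducibility of $\{\bY_n^{\{1,2\}}\}$ on $\mathbb{Z}^2 \times S_0$, I fix an integer $n_1 \ge 1$ and a sample path $(\bz_0, j_0) = (\bx, j),\, (\bz_1, j_1), \ldots, (\bz_{n_1}, j_{n_1}) = (\bx', j')$ in $\mathbb{Z}^2 \times S_0$ along which every one-step transition probability of $\{\bY_n^{\{1,2\}}\}$ is strictly positive; let $q_1 > 0$ denote the product of these probabilities. Since the level component is skip-free, $\|\bz_l - \bx\|_\infty \le l \le n_1$ for every $l$. I then shift this path by $k\bc$: using $\bx \in \mathbb{N}^2$ and $c_1,c_2\ge 1$, the estimate $(\bz_l + k\bc)_i \ge x_i - n_1 + k c_i$ shows that every translated state lies in $\mathbb{B}^{\{1,2\}}$ as soon as $k$ exceeds a threshold $K = K(\bx,n_1,\bc)$. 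Within $\mathbb{B}^{\{1,2\}}$ the one-step transition blocks of $\{\bY_n\}$ coincide with those of $\{\bY_n^{\{1,2\}}\}$, so the shifted path carries the same probability $q_1$, yielding
\[
[P^{n_1}]_{(\bx+k\bc,j),(\bx'+k\bc,j')} \;\ge\; q_1 \qquad \mbox{for all } k \ge K.
\]

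Plugging this into the stationary identity $\bnu P^{n_1} = \bnu$ gives $\nu_{(\bx'+k\bc,j')} \ge q_1\, \nu_{(\bx+k\bc,j)}$ for every $k \ge K$. Taking logarithms, dividing by $k$, and letting $k \to \infty$ makes $k^{-1}\log q_1$ vanish, so
\[
\underline{\xi}_{\bc}(\bx',j') \le \underline{\xi}_{\bc}(\bx,j), \qquad \bar\xi_{\bc}(\bx',j') \le \bar\xi_{\bc}(\bx,j).
\]
Swapping the roles of $(\bx,j)$ and $(\bx',j')$ and repeating the construction with an irreducibility path in the reverse direction produces the opposite inequalities, and equality in both decay rates follows.

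The only mildly delicate point is verifying that the shifted irreducibility path stays inside $\mathbb{B}^{\{1,2\}}$, because only there are the transitions of $\{\bY_n\}$ and $\{\bY_n^{\{1,2\}}\}$ identical and the probability of the shifted path equal to $q_1$. Given $\bc\in\mathbb{N}^2$, $\bx,\bx'\in\mathbb{N}^2$, and the skip-free property, this reduces to an elementary lower bound on $k$ and is not a real obstacle; the essential ingredient is Assumption \ref{as:MAprocess_irreducible}, which produces the irreducibility paths in both directions.
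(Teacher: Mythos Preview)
Your proof is correct and follows essentially the same strategy as the paper's: both exploit the irreducibility of $\{\bY^{\{1,2\}}_n\}$ to produce a positive-probability path between $(\bx,j)$ and $(\bx',j')$, shift it by $k\bc$ so that for large $k$ it lies entirely in $\mathbb{B}^{\{1,2\}}$ (where the transitions of $\{\bY_n\}$ and $\{\bY^{\{1,2\}}_n\}$ agree), deduce a $k$-independent multiplicative bound $\nu_{(\bx'+k\bc,j')}\ge c\,\nu_{(\bx+k\bc,j)}$, and then symmetrise. The only difference is the device used to extract the multiplicative bound: the paper invokes the occupation-measure (cycle) representation $\nu_{(\bx'+k\bc,j')}=q^{(k)}_{(\bx'+k\bc,j')}\nu_{(\bx+k\bc,j)}$ and bounds the occupation measure below by the path probability, whereas you use the more direct stationarity identity $\bnu P^{n_1}=\bnu$ and keep only the single term indexed by $(\bx+k\bc,j)$---your route is arguably cleaner and avoids the need to verify that the chosen path does not return to the starting state before reaching the target.
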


Since the proof of the proposition is elementary,  we give it in Appendix \ref{sec:xi_equality}. Hereafter, we denote $\underline{\xi}_{\bc}(\bx,j)$, $\bar\xi_{\bc}(\bx,j)$ and $\xi_{\bc}(\bx,j)$ by $\underline{\xi}_{\bc}$, $\bar\xi_{\bc}$ and $\xi_{\bc}$, respectively. The asymptotic decay rates in the coordinate directions, denoted by $\xi_{(1,0)}$ and $\xi_{(0,1)}$, have already been obtained in Ozawa \cite{Ozawa13}. 

%
%
Let $\{\check\bY^{\{1,2\}}_{n}\}=\{(\check{\bX}^{\{1,2\}}_n,\check{J}^{\{1,2\}}_n)\}$ be a lossy Markov chain derived from the induced MA-process $\{\bY^{\{1,2\}}_n\}$ by restricting the state space of the additive part to $\mathbb{N}^2$. To be precise, the process $\{\check\bY^{\{1,2\}}_n\}$ is a Markov chain on $\mathbb{N}^2\times S_0$ whose transition probability matrix $\check{P}^{\{1,2\}}=(\check{P}^{\{1,2\}}_{\bx,\bx'};\bx,\bx'\in\mathbb{N}^2)$ is given as
\begin{equation}
\check{P}^{\{1,2\}}_{\bx,\bx'} 
= \left\{ \begin{array}{ll} 
A^{\{1,2\}}_{\bx'-\bx}, & \mbox{if ($\bx\in(\mathbb{N}\setminus\{1\})^2$ and $\bx'-\bx\in\{-1,0,1\}^2$)} \cr
& \quad\mbox{or ($\bx\in\{1\}\times\mathbb{N}$ and $\bx'-\bx\in\{0,1\}\times\{-1,0,1\}$)}, \cr
& \quad\mbox{or ($\bx\in\mathbb{N}\times\{1\}$ and $\bx'-\bx\in\{-1,0,1\}\times\{0,1\}$)}, \cr
O, & \mbox{otherwise},
\end{array} \right. 
\end{equation}
where $\check{P}^{\{1,2\}}$ is strictly substochastic. 
%
Let $\check{\Phi}^{\{1,2\}}=(\check{\Phi}^{\{1,2\}}_{\bx,\bx'};\bx,\bx'\in\mathbb{N}^2)$ be the fundamental matrix (potential matrix) of $\check{P}^{\{1,2\}}$, i.e., 
\[
\check{\Phi}^{\{1,2\}} = \sum_{n=0}^\infty (\check{P}^{\{1,2\}})^n.
\]
We assume the following condition throughout the paper.
\begin{assumption} \label{as:Y12_onZpZp_irreducible}
$\{\check{\bY}^{\{1,2\}}_n\}$ is irreducible and aperiodic. 
\end{assumption}
This condition implies that the induced MA-process $\{\bY^{\{1,2\}}_n\}$ is irreducible and aperiodic, cf.\ Assumption \ref{as:MAprocess_irreducible}. 
By Theorem 5.1 of Ozawa \cite{Ozawa21}, we have, for any direction vector $\bc\in\mathbb{N}^2$, every $\bx=(x_1,x_2)\in\mathbb{N}^2$ such that $x_1=1$ or $x_2=1$, every $\bl\in\mathbb{Z}_+^2$ and  every $j_1,j_2\in S_0$, 
\begin{equation}
\lim_{k\to\infty} \frac{1}{k} \log\, [\check{\Phi}^{\{1,2\}}_{\bx,k\bc+\bl}]_{j_1,j_2} = - \sup\{\langle \bc,\btheta \rangle; \btheta\in\Gamma^{\{1,2\}}\}. 
\label{eq:tildePhi_limit}
\end{equation}
Since the stationary distribution of the 2d-QBD process can be represented in terms of the entries of $\check{\Phi}^{\{1,2\}}$ (see Section 6 of Ozawa \cite{Ozawa21}), this formula leads us to the following.  
\begin{lemma} \label{le:decayrate_upper}
\begin{equation}
\bar\xi_{\bc} \le \sup\{\langle \bc,\btheta \rangle; \btheta\in\Gamma^{\{1,2\}}\}.  
\label{eq:xic_upper}
\end{equation}
\end{lemma}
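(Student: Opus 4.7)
The plan is to combine the representation of the stationary distribution $\bnu$ in terms of $\check{\Phi}^{\{1,2\}}$ provided in Section~6 of Ozawa~\cite{Ozawa21} with the exponential asymptotic \eqref{eq:tildePhi_limit}. Since $\bar\xi_\bc=-\liminf_{k\to\infty}k^{-1}\log\nu_{(\bx+k\bc,j)}$, the inequality $\bar\xi_\bc\le\sup\{\langle\bc,\btheta\rangle;\btheta\in\Gamma^{\{1,2\}}\}$ is equivalent to a \textit{lower} bound on the probabilities $\nu_{(\bx+k\bc,j)}$, and this will be extracted by retaining just a single nonnegative summand in the cited representation.

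More precisely, the representation expresses $\nu_{(\bx+k\bc,j)}$, for $\bx+k\bc\in\mathbb{N}^2$ (which holds for all sufficiently large $k$ since $\bc\in\mathbb{N}^2$), as a countable nonnegative linear combination of entries $[\check{\Phi}^{\{1,2\}}_{\by,\bx+k\bc}]_{j',j}$ indexed by boundary points $\by\in\mathbb{N}^2$ with $y_1=1$ or $y_2=1$, whose coefficients are built from the boundary stationary probabilities $\bnu_{(0,0)}$, $\{\bnu_{(k',0)}\}$, $\{\bnu_{(0,k')}\}$ together with entries of the blocks $A^\alpha_{i_1,i_2}$; all such coefficients are nonnegative. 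Strict positivity of $\nu_{(\bx+k\bc,j)}$, which is guaranteed by Assumptions~\ref{as:QBD_irreducible} and \ref{as:2dQBD_stable}, then forces at least one term in the combination to appear with a strictly positive coefficient $c=c(\by_0,j_0)>0$, yielding
\[
\nu_{(\bx+k\bc,j)}\ \ge\ c\,[\check{\Phi}^{\{1,2\}}_{\by_0,\bx+k\bc}]_{j_0,j}.
\]
Writing $\bx+k\bc=k\bc+\bx$ and invoking \eqref{eq:tildePhi_limit} on the right-hand side with $\bl=\bx$ (noting that $\by_0$ satisfies $y_{0,1}=1$ or $y_{0,2}=1$ by construction), I obtain
\[
\liminf_{k\to\infty}\frac{1}{k}\log\nu_{(\bx+k\bc,j)}\ \ge\ -\sup\{\langle\bc,\btheta\rangle;\btheta\in\Gamma^{\{1,2\}}\},
\]
which rearranges to the claim. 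Independence of the decay rates from the choice of $(\bx,j)\in\mathbb{N}^2\times S_0$ is supplied by Proposition~\ref{pr:xi_equality}.

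The main obstacle will be accurately transcribing the representation of Ozawa~\cite{Ozawa21}, Section~6, and verifying that (a) every coefficient in it is nonnegative, so that discarding all but one summand is a legitimate one-sided bound, and (b) the retained index $\by_0$ lies on the boundary $\{\by\in\mathbb{N}^2:y_1=1\text{ or }y_2=1\}$ where \eqref{eq:tildePhi_limit} is formulated. Once the cited representation is in hand, the entire proof reduces to a single invocation of \eqref{eq:tildePhi_limit}, which already packages the heavy complex-analytic input.
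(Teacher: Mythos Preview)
Your proposal is correct and follows essentially the same route as the paper, which gives no formal proof but simply remarks that the representation of $\bnu$ in terms of $\check{\Phi}^{\{1,2\}}$ from Section~6 of \cite{Ozawa21}, combined with \eqref{eq:tildePhi_limit}, yields the lemma. You have correctly unpacked this: the representation has nonnegative coefficients independent of the target level, so retaining a single strictly positive summand (which exists by irreducibility and positivity of $\bnu$) gives the required lower bound on $\nu_{(\bx+k\bc,j)}$, and \eqref{eq:tildePhi_limit} then converts it into $\bar\xi_{\bc}\le\theta_{\bc}^{max}$.
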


%
%
%
\subsection{Block state process} \label{sec:block_state_process}

For $\bb=(b_1,b_2)\in\mathbb{N}^2$, we consider another 2d-QBD process derived from the original 2d-QBD process $\{\bY_n\}=\{(\bX_n,J_n)\}$ by regarding each $b_1\times b_2$ block of level as a level (see, for example, Subsection 4.2 of Ozawa \cite{Ozawa21}).
For $i\in\{1,2\}$, denote by ${}^{\bb}\!X_{i,n}$ and ${}^{\bb}\!M_{i,n}$ the quotient and remainder of $X_{i,n}$ divided by $b_i$, respectively, i.e., 
\[
X_{i,n}=b_i {}^{\bb}\!X_{i,n}+{}^{\bb}\!M_{i,n},
\]
where ${}^{\bb}\!X_{i,n}\in\mathbb{Z}_+$ and ${}^{\bb}\!M_{i,n}\in\{0,1,...,b_i-1\}$. 
Define a process $\{{}^{\bb}\bY_n\}$ as 
\[
{}^{\bb}\bY_n=({}^{\bb}\!\bX_n,({}^{\bb}\!\bM_n,{}^{\bb}\!J_n)),
\]
where ${}^{\bb}\!\bX_n=({}^{\bb}\!X_{1,n},{}^{\bb}\!X_{2,n})$ is the level state and $({}^{\bb}\!\bM_n,{}^{\bb}\!J_n)=({}^{\bb}\!M_{1,n},{}^{\bb}\!M_{2,n},J_n)$ the phase state. The process $\{{}^{\bb}\bY_n\}$ is a 2d-QBD process and its state space is given by $\mathbb{Z}_+^2\times(\mathbb{Z}_{0,b_1-1}\times\mathbb{Z}_{0,b_2-1}\times S_0)$, where $\mathbb{Z}_{0,b_i-1}=\{0,1,...,b_i-1\}$. We call $\{{}^{\bb}\bY_n\}$ a $\bb$-block state process. 
The transition probability matrix of $\{{}^{\bb}\bY_n\}$, denoted by ${}^{\bb}\!P=({}^{\bb}\!P_{\bx,\bx'};\bx,\bx'\in\mathbb{Z}_+^2)$, has the same block structure as $P$. For $\alpha\in\scrI_2$ and $i_1,i_2\in\{-1,0,1\}$, denote by  ${}^{\bb}\!A_{i_1,i_2}^\alpha$ the transition probability block of ${}^{\bb}\!P$ corresponding to $A_{i_1,i_2}^\alpha$ of $P$, then ${}^{\bb}\!A_{i_1,i_2}^\alpha$ can be represented by using $A_{i'_1,i'_2}^{\alpha'},\,\alpha'\in\scrI_2,\,i'_1,i'_2\in\{-1,0,1\}$. We omit the explicit expressions for the transition probability blocks since we do not use them directly. 
Let ${}^{\bb}\bnu=({}^{\bb}\bnu_{\bx}; \bx=(x_1,x_2)\in\mathbb{Z}_+^2)$ be the stationary distribution of $\{{}^{\bb}\bY_n\}$, where 
\[
{}^{\bb}\bnu_{\bx}=\left(\bnu_{(b_1x_1+i_1,b_2x_2+i_2)};i_1\in\mathbb{Z}_{0,b_1-1},\,i_2\in\mathbb{Z}_{0,b_2-1}\right) 
\]
and $\bnu=(\bnu_{\bx}; \bx\in\mathbb{Z}_+^2)$ is the stationary distribution of the original 2d-QBD process. 
Denote by ${}^{\bb}\xi_{(1,0)}$ and ${}^{\bb}\xi_{(0,1)}$ the asymptotic decay rates of the sequences $\{{}^{\bb}\bnu_{(k,0)}\}_{k\ge 0}$ and $\{{}^{\bb}\bnu_{(0,k)}\}_{k\ge 0}$, respectively, i.e., for $i_1\in\mathbb{Z}_{0,b_1-1}$, $i_2\in\mathbb{Z}_{0,b_2-1}$ and $j\in S_0$, 
\[
{}^{\bb}\xi_{(1,0)} = -\lim_{k\to\infty} \frac{1}{k} \log \bigl[ [{}^{\bb}\bnu_{(k,0)}]_{i_1,i_2} \bigr]_j,\quad 
{}^{\bb}\xi_{(0,1)} = -\lim_{k\to\infty} \frac{1}{k} \log \bigl[ [{}^{\bb}\bnu_{(0,k)}]_{i_1,i_2} \bigr]_j, 
\]
where ${}^{\bb}\xi_{(1,0)}$ and ${}^{\bb}\xi_{(0,1)}$ do not depend on any of $i_1$, $i_2$ and $j$. 
Since $\{{}^{\bb}\bY_n\}$ is a 2d-QBD process and inherits the nature of the original 2d-QBD process, the results obtained in Refs.\ \cite{Ozawa13,Ozawa18} also hold for $\{{}^{\bb}\bY_n\}$. For example, we have ${}^{\bb}\xi_{(1,0)}=b_1 \xi_{(1,0)}$ and ${}^{\bb}\xi_{(0,1)}=b_2 \xi_{(0,1)}$. 
For later use, we summarize the properties of $\{{}^{\bb}\bY_n\}$ in Appendix \ref{sec:block_2dQBD_results}, and here define, for $\bc\in\mathbb{N}^2$, the asymptotic decay rate in direction $\bc$, ${}^{\bb}\xi_{\bc}$, as 
\[
{}^{\bb}\xi_{\bc} = -\lim_{k\to\infty} \frac{1}{k} \log \bigl[ [{}^{\bb}\bnu_{k\bc}]_{i_1,i_2} \bigr]_j, 
\]
where $i_1$, $i_2$ and $j$ are arbitrary.

%
%

\section{Asymptotics in an arbitrary direction} \label{sec:asymptotics}

Hereafter, we use the following notation: For $r>0$, $\Delta_r$ and $\partial\Delta_r$ are the open disk and circle of center $0$ and radius $r$ on the complex plane, respectively. For $r_1,r_2>0$ such that $r_1<r_2$, $\Delta_{r_1,r_2}$ is the open annular domain defined as $\Delta_{r_1,r_2}=\Delta_{r_2}\setminus(\Delta_{r_1}\cup\partial\Delta_{r_1})$. 
%

\subsection{Methodology and preparation}

For $\bc=(c_1,c_2)\in\mathbb{N}^2$, define the generating function of the stationary probabilities of the 2d-QBD process in direction $\bc$, $\bvarphi^{\bc}(z)$, as
\[
\bvarphi^{\bc}(z) 
= \sum_{k=0}^\infty z^k \bnu_{k\bc} 
=  \sum_{k=-\infty}^\infty z^k \tilde{\bnu}_{k\bc}, 
\]
where $z$ is a complex variable. 
Furthermore, define real values $\theta_{\bc}^{min}$ and $\theta_{\bc}^{max}$ as
\[
\theta_{\bc}^{min}=\inf\{\langle \bc, \btheta \rangle; \btheta\in\Gamma^{\{1,2\}} \},\quad
\theta_{\bc}^{max}=\sup\{\langle \bc, \btheta \rangle; \btheta\in\Gamma^{\{1,2\}} \}. 
\]
For any $x\in[0,e^{\theta_{\bc}^{max}})$, if the power series $\bvarphi^{\bc}(z)$ is absolutely convergent at $z=x$, then we have by the Caucy-Hadamard theorem that $\underline{\xi}_{\bc}\ge \theta_{\bc}^{max}$. This and \eqref{eq:xic_upper} imply $\xi_{\bc}=\theta_{\bc}^{max}$. We use this procedure for obtaining the asymptotic decay rate $\xi_{\bc}$ when $\xi_{\bc}$ is given by $\theta_{\bc}^{max}$. 

On the other hand, when $\xi_{\bc}$ is less than $\theta_{\bc}^{max}$, we demonstrate that for a certain point $x_0\in[0,e^{\theta_{\bc}^{max}})$,
\begin{equation}
\lim_{x\,\uparrow\, x_0} (x_0-x) \bvarphi(x)= \bg \quad \mbox{for some positive vector $\bg$}, 
\end{equation}
and that the complex function $\bvarphi^{\bc}(z)$ is analytic in $\Delta_{x_0}\cup(\partial\Delta_{x_0}\setminus\{x_0\})$. In this case, by Theorem VI.4 of Flajolet and Sedgewick \cite{Flajolet09}, the exact asymptotic formula for the sequence $\{\bnu_{k\bc}\}$ is given by $x_0^{-k}$ and we have $\xi_{\bc}=\log x_0$. 

For $k\in\mathbb{Z}$, let $\mathbb{Z}_{\le k}$ and $\mathbb{Z}_{\ge k}$ be the set of integers less than or equal to $k$ and that of integers greater than or equal to $k$, respectively. We introduce additional assumptions.
\begin{assumption} \label{as:Y12_onZpmZmp_irreducible}
\begin{itemize}
\item[(i)] The lossy Markov chain derived from the induced MA-process $\{\bY^{\{1,2\}}_n\}$ by restricting the state space to $\mathbb{Z}_{\le 0}\times \mathbb{Z}_{\ge 0}\times S_0$ is irreducible and aperiodic. 
\item[(ii)] The lossy Markov chain derived from $\{\bY^{\{1,2\}}_n\}$ by restricting the state space to $\mathbb{Z}_{\ge 0}\times \mathbb{Z}_{\le 0}\times S_0$ is irreducible and aperiodic. 
\end{itemize}
\end{assumption}

\begin{remark} \label{re:Y12_onZpmZmp_irreducible}
Let $\bc=(c_1,c_2)\in\mathbb{N}^2$ be a direction vector, and define subspaces $\mathbb{S}_{\bc}^L$ and $\mathbb{S}_{\bc}^R$ as 
\begin{align*}
&\mathbb{S}_{\bc}^L = \{(x_1,x_2,j)\in\mathbb{Z}^2\times S_0: c_1 x_2-c_2 x_1 \ge 0\}, \\
&\mathbb{S}_{\bc}^R = \{(x_1,x_2,j)\in\mathbb{Z}^2\times S_0: c_1 x_2-c_2 x_1 \le 0\}.
\end{align*}
$\mathbb{S}_{\bc}^L$ is the upper-left space of the line $c_1 x_2-c_2 x_1=0$, and $\mathbb{S}_{\bc}^R$ the lower-right space of the same line. Due to the space-homogeneity of $\{\bY^{\{1,2\}}_n\}$ with respect to the additive part, under part (i) of Assumption \ref{as:Y12_onZpmZmp_irreducible}, the lossy Markov chain derived from $\{\bY^{\{1,2\}}_n\}$ by restricting the state space to $\mathbb{S}_{\bc}^L$ is irreducible and aperiodic, and under part (ii) of Assumption \ref{as:Y12_onZpmZmp_irreducible}, that derived from $\{\bY^{\{1,2\}}_n\}$ by restricting the state space to $\mathbb{S}_{\bc}^R$ is irreducible and aperiodic. 
We will use this property later.
\end{remark}

Assumption \ref{as:Y12_onZpmZmp_irreducible} seems rather strong and it can probably be replaced with other weaker one. We adopt the assumption since it makes discussions simple; See Remark \ref{re:hatGc_eigen} in the following subsection.

%
For $\bx\in\mathbb{Z}^2$, define the matrix generating function of the blocks of $\Phi^{\{1,2\}}$ in direction $\bc$, $\Phi^{\bc}_{\bx,*}(z)$, as
\[
\Phi^{\bc}_{\bx,*}(z)
=  \sum_{k=-\infty}^\infty z^k \Phi^{\{1,2\}}_{\bx,k\bc}.
\]
The matrix generating function $\Phi^{\bc}_{\bx,*}(z)$ satisfies the following.
\begin{proposition} \label{pr:Phic_finite}
For every $\bx\in\mathbb{Z}^2$, $\Phi^{\bc}_{\bx,*}(z)$ is absolutely convergent and entry-wise analytic in the open annual domain $\Delta_{e^{\theta_{\bc}^{min}},e^{\theta_{\bc}^{max}}}$.  
\end{proposition}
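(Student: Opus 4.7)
The plan is to produce, for every real $\rho\in(\theta_{\bc}^{min},\theta_{\bc}^{max})$, an entry-wise finite majorant of $\sum_{k\in\mathbb{Z}}e^{\rho k}\Phi^{\{1,2\}}_{\bx,k\bc}$, from which absolute and locally uniform convergence of the Laurent series $\Phi^{\bc}_{\bx,*}(z)$ on $\Delta_{e^{\theta_{\bc}^{min}},e^{\theta_{\bc}^{max}}}$, and hence entry-wise analyticity, follows at once. The space-homogeneity identity \eqref{eq:Phixx_eq} reduces the task to bounding $\sum_{k\in\mathbb{Z}}e^{\rho k}\Phi^{\{1,2\}}_{\bzero,k\bc-\bx}$, a sum indexed on a sublattice of $\mathbb{Z}^2$.

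The key ingredient is the identity, valid for every $\btheta=(\theta_1,\theta_2)\in\mathbb{R}^2$,
\begin{equation*}
\bigl(A^{\{1,2\}}_{*,*}(e^{\theta_1},e^{\theta_2})\bigr)^{n}=\sum_{\bx'\in\mathbb{Z}^2} e^{\langle\btheta,\bx'\rangle}\,\bigl[(P^{\{1,2\}})^{n}\bigr]_{\bzero,\bx'},
\end{equation*}
which is obtained by expanding $A^{\{1,2\}}_{*,*}(z_1,z_2)=\sum_{i_1,i_2} z_1^{i_1}z_2^{i_2}A^{\{1,2\}}_{i_1,i_2}$ and iterating, using space-homogeneity of $P^{\{1,2\}}$. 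Summing over $n\ge 0$ (all terms nonnegative; Fubini-Tonelli) gives
\begin{equation*}
\sum_{\bx'\in\mathbb{Z}^2} e^{\langle\btheta,\bx'\rangle}\,\Phi^{\{1,2\}}_{\bzero,\bx'}=\sum_{n=0}^{\infty}\bigl(A^{\{1,2\}}_{*,*}(e^{\theta_1},e^{\theta_2})\bigr)^{n},
\end{equation*}
and the right-hand Neumann series is entry-wise finite exactly when $\spr(A^{\{1,2\}}_{*,*}(e^{\theta_1},e^{\theta_2}))<1$, equivalently when $\cp(A^{\{1,2\}}_{*,*}(e^{\theta_1},e^{\theta_2}))>1$, i.e., when $\btheta\in\Gamma^{\{1,2\}}$.

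For any $\rho\in(\theta_{\bc}^{min},\theta_{\bc}^{max})$, convexity and boundedness of $\overline{\Gamma^{\{1,2\}}}$ (recorded just after the definition of $\Gamma^{\{1,2\}}$) guarantee some $\btheta\in\Gamma^{\{1,2\}}$ with $\langle\bc,\btheta\rangle=\rho$; hence $\langle\btheta,k\bc-\bx\rangle=\rho k-\langle\btheta,\bx\rangle$, and restricting the finite nonnegative series above to the sublattice $\{k\bc-\bx:k\in\mathbb{Z}\}$ gives, entry-wise,
\begin{equation*}
\sum_{k\in\mathbb{Z}} e^{\rho k}\,\Phi^{\{1,2\}}_{\bx,k\bc}\,\le\, e^{\langle\btheta,\bx\rangle}\sum_{\bx'\in\mathbb{Z}^2} e^{\langle\btheta,\bx'\rangle}\,\Phi^{\{1,2\}}_{\bzero,\bx'}<\infty.
\end{equation*}
Applied at $\rho=\log|z|$ this yields pointwise absolute convergence on the open annulus; for any $r_1,r_2$ with $e^{\theta_{\bc}^{min}}<r_1\le r_2<e^{\theta_{\bc}^{max}}$, using $\rho=\log r_2$ to dominate the $k\ge 0$ tail and $\rho=\log r_1$ for the $k<0$ tail produces $z$-uniform majorants on $\{z:r_1\le|z|\le r_2\}$, so $\Phi^{\bc}_{\bx,*}(z)$ converges uniformly on compact subsets and defines an entry-wise analytic function on $\Delta_{e^{\theta_{\bc}^{min}},e^{\theta_{\bc}^{max}}}$. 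The main obstacle I anticipate is making the MA-process identity of the second paragraph rigorous, since it is not quoted as a lemma earlier in the paper; once that step is granted, the rest is a routine manipulation of nonnegative series.
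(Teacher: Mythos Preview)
Your proposal is correct and follows essentially the same route as the paper: both establish the identity $\sum_{n\ge 0}(A^{\{1,2\}}_{*,*}(e^{\theta_1},e^{\theta_2}))^n=\sum_{\bx'\in\mathbb{Z}^2}e^{\langle\btheta,\bx'\rangle}\Phi^{\{1,2\}}_{\bzero,\bx'}$, restrict this finite nonnegative series to the sublattice $\{k\bc-\bx\}$ via \eqref{eq:Phixx_eq}, and use the convexity of $\overline{\Gamma^{\{1,2\}}}$ to cover every $\rho\in(\theta_{\bc}^{min},\theta_{\bc}^{max})$. Your write-up is in fact more explicit than the paper's on two points (the derivation of the MA identity and the intermediate-value step producing $\btheta$ with $\langle\bc,\btheta\rangle=\rho$), and your local-uniform-convergence argument for analyticity is equivalent to the paper's appeal to the standard Laurent-series fact.
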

\begin{proof}
For every $\bx\in\mathbb{Z}^2$, we have for any $\btheta=(\theta_1,\theta_2)\in\Gamma^{\{1,2\}}$ that
\begin{align}
\infty>\sum_{n=0}^\infty (A^{\{1,2\}}_{*,*}(e^{\theta_1},e^{\theta_2}))^n 
&= \sum_{\bx'\in\mathbb{Z}^2} e^{\langle \bx',\btheta \rangle} \Phi^{\{1,2\}}_{\bzero,\bx'} \cr
&\ge \sum_{k=-\infty}^\infty e^{\langle k\bc-\bx,\btheta \rangle} \Phi^{\{1,2\}}_{\bzero,k\bc-\bx}
= e^{-\langle \bx,\btheta \rangle} \Phi^{\bc}_{\bx,*}(e^{\langle \bc,\btheta \rangle}),  
\end{align}
where we use the identity $\Phi^{\{1,2\}}_{\bzero,k\bc-\bx}=\Phi^{\{1,2\}}_{\bx,k\bc}$. Since the closure of $\Gamma^{\{1,2\}}$  is a convex set, $\Phi^{\bc}_{\bx,*}(z)$ is, therefore, absolutely convergent in $\Delta_{e^{\theta_{\bc}^{min}},e^{\theta_{\bc}^{max}}}$. As a result, $\Phi^{\bc}_{\bx,*}(z)$ is analytic in $\Delta_{e^{\theta_{\bc}^{min}},e^{\theta_{\bc}^{max}}}$ since each entry of $\Phi^{\bc}_{\bx,*}(z)$ is represented as a Laurent series of $z$ (see, for example, Section II.1 of Markushevich \cite{Markushevich05}). 
\end{proof}

By compensation equation \eqref{eq:tildenux}, $\bvarphi^{\bc}(z)$ is given in terms of $\Phi^{\bc}_{\bx,*}(z)$ by 
\begin{equation}
\bvarphi^{\bc}(z) = \bvarphi^{\bc}_0(z) + \bvarphi^{\bc}_1(z) + \bvarphi^{\bc}_2(z), 
\label{eq:varphic_eq1}
\end{equation}
where
\begin{align}
&\bvarphi^{\bc}_0(z) = \sum_{i_1,i_2\in\{-1,0,1\}} \bnu_{(0,0)} (A^\emptyset_{i_1,i_2}-A^{\{1,2\}}_{i_1,i_2}) \Phi^{\bc}_{(i_1,i_2),*}(z), \\
&\bvarphi^{\bc}_1(z) = \sum_{k=1}^\infty\ \sum_{i_1,i_2\in\{-1,0,1\}} \bnu_{(k,0)} (A^{\{1\}}_{i_1,i_2}-A^{\{1,2\}}_{i_1,i_2}) \Phi^{\bc}_{(k+i_1,i_2),*}(z), \\
&\bvarphi^{\bc}_2(z) = \sum_{k=1}^\infty\ \sum_{i_1,i_2\in\{-1,0,1\}} \bnu_{(0,k)} (A^{\{2\}}_{i_1,i_2}-A^{\{1,2\}}_{i_1,i_2}) \Phi^{\bc}_{(i_1,k+i_2),*}(z). 
\label{eq:phic2_def}
\end{align}
For $i\in\{1,2,3\}$, let $\xi_{\bc,i}$ be the supremum point of the convergence domain of $\bvarphi^{\bc}_i(e^\theta)$, defined as
\[
\xi_{\bc,i} = \sup\{\theta\in\mathbb{R}; \mbox{$\bvarphi^{\bc}_i(e^\theta)$ is absolutely convergent, elementwise} \}.
\]
By Proposition \ref{pr:Phic_finite}, we immediately obtain the following.
\begin{proposition} \label{pr:varphic0_finite}
$\bvarphi_0^{\bc}(z)$ is absolutely convergent and elementwise analytic in $\Delta_{e^{\theta_{\bc}^{min}},e^{\theta_{\bc}^{max}}}$, and we have $\xi_{\bc,0}\ge \theta_{\bc}^{max}$.
\end{proposition}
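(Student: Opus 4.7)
The plan is to obtain this proposition as an immediate corollary of Proposition \ref{pr:Phic_finite}. Inspecting the definition of $\bvarphi_0^{\bc}(z)$, it is a \emph{finite} sum of only nine terms, indexed by $(i_1,i_2)\in\{-1,0,1\}^2$, each of which is the product of the fixed finite row vector $\bnu_{(0,0)}$, the fixed finite matrix $A^\emptyset_{i_1,i_2}-A^{\{1,2\}}_{i_1,i_2}$, and the matrix generating function $\Phi^{\bc}_{(i_1,i_2),*}(z)$. None of the first two factors depend on $z$, and all matrices and vectors involved have dimension bounded by $s_0$, which is finite.

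First, I would invoke Proposition \ref{pr:Phic_finite} to conclude that for each of the nine index pairs $(i_1,i_2)\in\{-1,0,1\}^2$, the matrix generating function $\Phi^{\bc}_{(i_1,i_2),*}(z)$ is absolutely convergent and entry-wise analytic on the open annular domain $\Delta_{e^{\theta_{\bc}^{min}},e^{\theta_{\bc}^{max}}}$. Second, since left multiplication by a fixed finite-dimensional vector/matrix preserves both absolute convergence and analyticity of each entry, and since a finite sum of absolutely convergent and analytic matrix-valued functions on a common domain is again absolutely convergent and analytic on that domain, $\bvarphi_0^{\bc}(z)$ inherits both properties on $\Delta_{e^{\theta_{\bc}^{min}},e^{\theta_{\bc}^{max}}}$.

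Finally, for the bound $\xi_{\bc,0}\ge \theta_{\bc}^{max}$, I would observe that for every real $\theta$ with $\theta_{\bc}^{min}<\theta<\theta_{\bc}^{max}$ the point $z=e^\theta$ lies in the annular domain, hence $\bvarphi_0^{\bc}(e^\theta)$ is absolutely convergent elementwise there. By the very definition of $\xi_{\bc,0}$ as the supremum of such $\theta$, this yields $\xi_{\bc,0}\ge \theta_{\bc}^{max}$. I do not anticipate any substantive obstacle: the statement is a routine bookkeeping consequence of Proposition \ref{pr:Phic_finite}, the finiteness of the index set $\{-1,0,1\}^2$, the finiteness of $s_0$, and the elementary stability of absolute convergence and analyticity under finite linear combinations with constant coefficients.
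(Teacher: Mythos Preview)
Your proposal is correct and matches the paper's own reasoning: the paper simply states that Proposition~\ref{pr:Phic_finite} immediately yields the result, and your write-up spells out exactly the routine details (finite sum over $\{-1,0,1\}^2$, fixed finite-dimensional coefficients, stability of absolute convergence and analyticity under finite linear combinations) that justify that immediacy.
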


We analyze $\bvarphi_1^{\bc}(z)$ and $\bvarphi_2^{\bc}(z)$ in the following subsection.

%
%
\subsection{In the case of direction vector $\bc=(1,1)$}

In overall this subsection, we assume $\bc=(1,1)$. 

%
First, focusing on $\bvarphi_2^{\bc}(z)$, we construct a new skip-free MA-process
from $\{\bY^{\{1,2\}}_n\}$ and apply the matrix analytic method to the MA-process. 
The new MA-process is denoted by $\{\hat{\bY}_n\}=\{(\hat{\bX}_n,\hat{\bJ}_n)\}=\{(\hat{X}_{1,n},\hat{X}_{2,n}),(\hat{R}_n,\hat{J}_n)\}$, where $\hat{X}_{1,n}=X^{\{1,2\}}_{1,n}$, $\hat{X}_{2,n}$ and $\hat{R}_n$ are the quotient and remainder of $X^{\{1,2\}}_{2,n}-X^{\{1,2\}}_{1,n}$ divided by $2$, respectively, and $\hat{J}_n=J^{\{1,2\}}_n$ (see Fig.\,\ref{fig:fig31}). 
%
\begin{figure}[t]
\begin{center}
\includegraphics[width=80mm,trim=0 0 0 0]{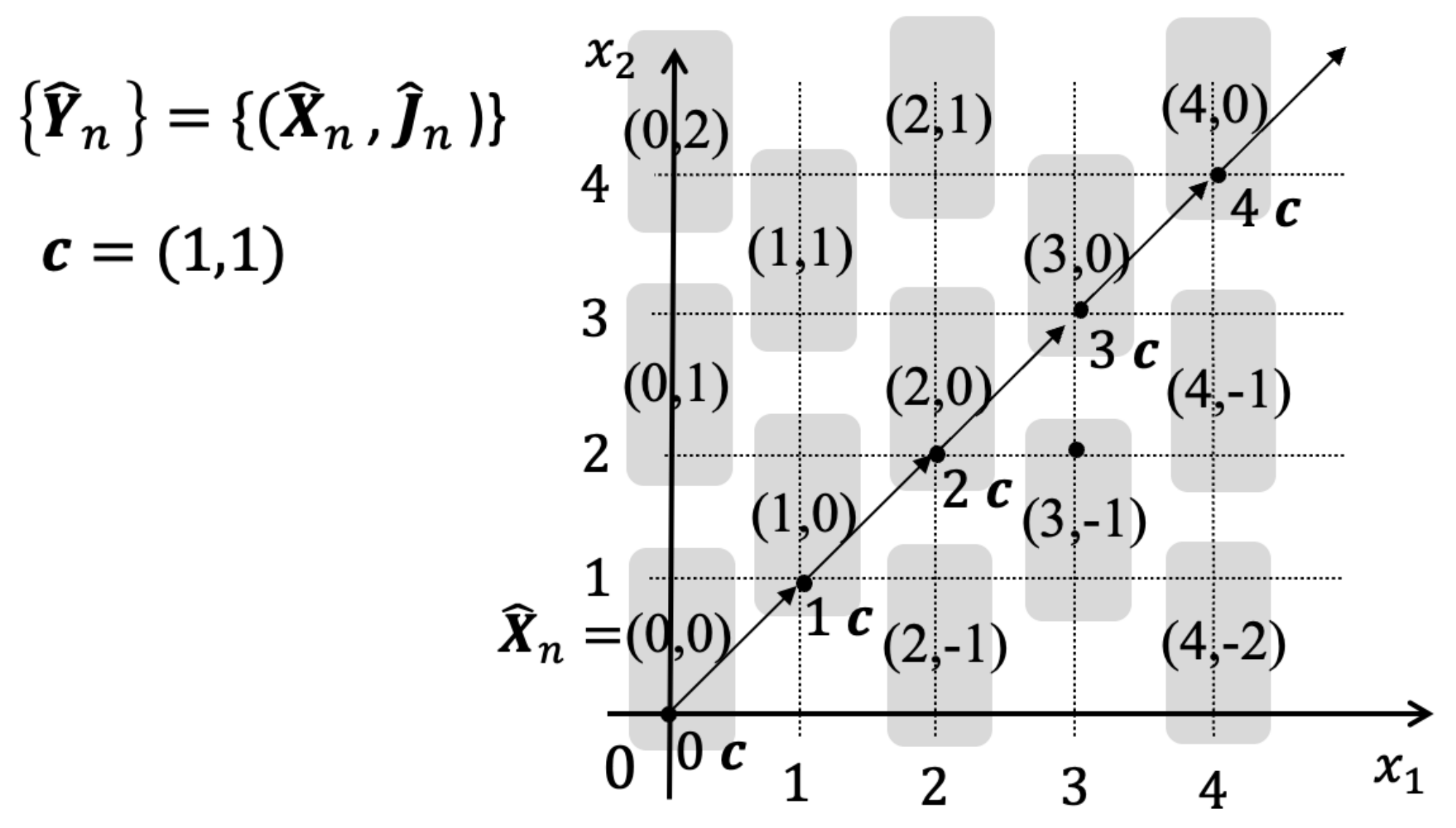} 
\caption{Level space of $\{\hat{\bY}_n\}$}
\label{fig:fig31}
\end{center}
\end{figure}
%
The state space of $\{\hat{\bY}_n\}$ is $\mathbb{Z}^2\times\{0,1\}\times S_0$, and $\hat{\bX}_n$ and $\hat{\bJ}_n$ are the additive part (level state) and background state (phase state), respectively. The additive part of $\{\hat{\bY}_n\}$ is skip free, and this is a reason why we consider this new MA-process. 
From the definition, if $\hat{\bX}_n=(x_1,x_2)$ and $\hat{R}_n=r$ in the new MA-process, it follows that $X^{\{1,2\}}_{1,n}=x_1$,  $X^{\{1,2\}}_{2,n}=x_1+2 x_2+r$ in the original MA-process. Hence, $\hat{\bY}_n=(k,0,0,j)$ means $\bY^{\{1,2\}}_n=(k,k,j)$.
Here we note that $\{\hat{\bY}_n\}$ is slightly different from the $(1,2)$-block state process derived from $\{\bY^{\{1,2\}}_n\}$; See Subsection \ref{sec:block_state_process}. In the latter process, the state $(x_1,x_2,r,j)$ corresponds to the state $(x_1, 2x_2+r,j)$ of the original MA-process. 
Denote by $\hat{P}=(\hat{P}_{\bx,\bx'};\bx,\bx'\in\mathbb{Z}^2)$ the transition probability matrix of $\{\hat{\bY}_n\}$, which is given as 
\[
\hat{P}_{\bx,\bx'} 
= \left\{ \begin{array}{ll} 
\hat{A}^{\{1,2\}}_{\bx'-\bx}, & \mbox{if $\bx'-\bx\in\{-1,0,1\}^2$}, \cr
O, & \mbox{otherwise}, 
\end{array} \right.
\]
where 
\begin{align*}
&\hat{A}^{\{1,2\}}_{-1,1} = \begin{pmatrix} A^{\{1,2\}}_{-1,1} & O \cr  A^{\{1,2\}}_{-1,0} & A^{\{1,2\}}_{-1,1} \end{pmatrix}, \quad
\hat{A}^{\{1,2\}}_{0,1} = \begin{pmatrix} O & O \cr A^{\{1,2\}}_{0,1} & O \end{pmatrix}, \quad
\hat{A}^{\{1,2\}}_{1,1} = \begin{pmatrix} O & O \cr O &O \end{pmatrix}, \cr
&\hat{A}^{\{1,2\}}_{-1,0} = \begin{pmatrix} A^{\{1,2\}}_{-1,-1} & A^{\{1,2\}}_{-1,0} \cr  O & A^{\{1,2\}}_{-1,-1} \end{pmatrix}, \quad
\hat{A}^{\{1,2\}}_{0,0} = \begin{pmatrix} A^{\{1,2\}}_{0,0} & A^{\{1,2\}}_{0,1} \cr A^{\{1,2\}}_{0,-1} & A^{\{1,2\}}_{0,0} \end{pmatrix}, \quad
\hat{A}^{\{1,2\}}_{1,0} = \begin{pmatrix} A^{\{1,2\}}_{1,1} & O \cr A^{\{1,2\}}_{1,0} & A^{\{1,2\}}_{1,1} \end{pmatrix}, \cr
&\hat{A}^{\{1,2\}}_{-1,-1} = \begin{pmatrix} O & O \cr  O & O \end{pmatrix}, \quad
\hat{A}^{\{1,2\}}_{0,-1} = \begin{pmatrix} O & A^{\{1,2\}}_{0,-1} \cr O & O \end{pmatrix}, \quad
\hat{A}^{\{1,2\}}_{1,-1} = \begin{pmatrix} A^{\{1,2\}}_{1,-1} & A^{\{1,2\}}_{1,0} \cr O & A^{\{1,2\}}_{1,-1} \end{pmatrix}.
\end{align*}
Denote by $\hat{\Phi}=(\hat{\Phi}_{\bx,\bx'};\bx,\bx'\in\mathbb{Z}^2)$ the fundamental matrix of $\hat{P}$, i.e., $\hat{\Phi}=\sum_{n=0}^\infty (\hat{P})^n$, and for $\bx=(x_1,x_2)\in\mathbb{Z}^2$, define a matrix generating function $\hat{\Phi}_{\bx,*}(z)$ as
\begin{equation}
\hat{\Phi}_{\bx,*}(z) 
= \sum_{k=-\infty}^\infty z^k \hat{\Phi}_{\bx,(k,0)}
= \begin{pmatrix} 
\Phi^{\bc}_{(x_1,x_1+2 x_2),*}(z) &  \Phi^{\bc}_{(x_1,x_1+2 x_2-1),*}(z) \cr 
\Phi^{\bc}_{(x_1,x_1+2 x_2+1),*}(z) &  \Phi^{\bc}_{(x_1,x_1+2 x_2),*}(z)
\end{pmatrix}.
\label{eq:hatPhixs_def}
\end{equation}
By Proposition \ref{pr:Phic_finite}, for every $\bx\in\mathbb{Z}^2$, $\hat\Phi_{\bx,*}(z)$ is entry-wise analytic in the open annual domain $\Delta_{e^{\theta_{\bc}^{min}},e^{\theta_{\bc}^{max}}}$.  
%
%
Define blocks $\hat{A}^{\{2\}}_{i_1,i_2},\,i_1,i_2\in\{-1,0,1\},$ as $\hat{A}^{\{2\}}_{-1,1} = \hat{A}^{\{2\}}_{-1,0} = \hat{A}^{\{2\}}_{-1,-1} = O$ and 
\begin{align*}
\hat{A}^{\{2\}}_{0,1} = \begin{pmatrix} O & O \cr A^{\{2\}}_{0,1} & O \end{pmatrix}, \quad
\hat{A}^{\{2\}}_{0,0} = \begin{pmatrix} A^{\{2\}}_{0,0} & A^{\{2\}}_{0,1} \cr A^{\{2\}}_{0,-1} & A^{\{2\}}_{0,0} \end{pmatrix}, \quad
\hat{A}^{\{2\}}_{0,-1} = \begin{pmatrix} O & A^{\{2\}}_{0,-1} \cr O & O \end{pmatrix}, \cr
\hat{A}^{\{2\}}_{1,1} = \begin{pmatrix} O & O \cr O &O \end{pmatrix}, \quad 
\hat{A}^{\{2\}}_{1,0} = \begin{pmatrix} A^{\{2\}}_{1,1} & O \cr A^{\{2\}}_{1,0} & A^{\{2\}}_{1,1} \end{pmatrix}, \quad 
\hat{A}^{\{2\}}_{1,-1} = \begin{pmatrix} A^{\{2\}}_{1,-1} & A^{\{2\}}_{1,0} \cr O & A^{\{2\}}_{1,-1} \end{pmatrix}.
\end{align*}
For $i_1,i_2\in\{-1,0,1\}$, define the following matrix generating functions:
\begin{align*}
&\hat{A}^{\{1,2\}}_{*,i_2}(z)=\sum_{i\in\{-1,0,1\}} z^i \hat{A}^{\{1,2\}}_{i,i_2},\quad
\hat{A}^{\{1,2\}}_{i_1,*}(z)=\sum_{i\in\{-1,0,1\}} z^i \hat{A}^{\{1,2\}}_{i_1,i},\\
&\hat{A}^{\{2\}}_{*,i_2}(z)=\sum_{i\in\{0,1\}} z^i \hat{A}^{\{2\}}_{i,i_2}, \quad
\hat{A}^{\{2\}}_{i_1,*}(z)=\sum_{i\in\{-1,0,1\}} z^i \hat{A}^{\{2\}}_{i_1,i}.
\end{align*}
Define a vector generating function $\hat{\bvarphi}^{\bc}_2(z) $ as
\begin{align}
\hat{\bvarphi}^{\bc}_2(z) 
&=\begin{pmatrix} \hat{\bvarphi}^{\bc}_{2,1}(z) & \hat{\bvarphi}^{\bc}_{2,2}(z) \end{pmatrix} \cr
&= \sum_{l=-\infty}^\infty z^l \sum_{k=1}^\infty\ \sum_{i_1,i_2\in\{-1,0,1\}} \hat{\bnu}_{(0,k)} (\hat{A}^{\{2\}}_{i_1,i_2}-\hat{A}^{\{1,2\}}_{i_1,i_2}) \hat{\Phi}_{(i_1,k+i_2),(l,0)}, 
\label{eq:hatphic2_def}
\end{align}
where, for $\bx=(x_1,x_2)\in\mathbb{Z}^2$, 
\[
\hat{\bnu}_{\bx}=\begin{pmatrix} \tilde{\bnu}_{(x_1,x_1+2x_2)} & \tilde{\bnu}_{(x_1,x_1+2x_2+1)} \end{pmatrix}
\]
and hence, for $k\ge 0$, $\hat{\bnu}_{(0,k)}=\begin{pmatrix} \bnu_{(0,2 k)} & \bnu_{(0,2 k+1)} \end{pmatrix}$. Since
\begin{align}
\hat{\bvarphi}^{\bc}_{2,1}(z) 
&= \bvarphi^{\bc}_2(z) - \sum_{i_1,i_2\in\{-1,0,1\}} \bnu_{(0,1)} (A^{\{2\}}_{i_1,i_2}-A^{\{1,2\}}_{i_1,i_2}) \Phi^{\bc}_{(i_1,1+i_2),*}(z), 
\label{eq:hatphic2_eq21} \\
\hat{\bvarphi}^{\bc}_{2,2}(z) 
&= \sum_{k=2}^\infty\ \sum_{i_1,i_2\in\{-1,0,1\}} \bnu_{(0,k)} (A^{\{2\}}_{i_1,i_2}-A^{\{1,2\}}_{i_1,i_2}) \Phi^{\bc}_{(i_1,k+i_2-1),*}(z),  
\label{eq:hatphic2_eq22}
\end{align}
we analyze $\hat{\bvarphi}^{\bc}_2(z)$ instead of $\bvarphi^{\bc}_2(z)$.
Let $\hat\xi_{\bc,2}$ be the supremum point of the convergence domain of $\hat{\bvarphi}^{\bc}_2(e^\theta)$, defined as 
\[
\hat\xi_{\bc,2} = \sup\{\theta\in\mathbb{R}; \mbox{$\hat{\bvarphi}^{\bc}_2(e^\theta)$ is absolutely convergent, elementwise} \}.
\]
By \eqref{eq:hatphic2_eq21}, we have $\xi_{\bc,2}\ge \hat\xi_{\bc,2}$. 

%
%
Next, we obtain a tractable representation for $\hat{\varphi}^{\bc}_2(z)$.  Since  $\{\hat{\bY}_n\}$ is space-homogeneous with respect to the additive part, we have, for every $\bx=(x_1,x_2)\in\mathbb{Z}^2$, 
\begin{equation}
\hat{\Phi}_{(x_1,x_2),*}(z) = z^{x_1} \hat{\Phi}_{(0,x_2),*}(z). 
\label{eq:hatPhi_eq1}
\end{equation}
Define a stopping time $\tau_0$ as 
\[
\tau_0 = \inf\{n\ge 1; \hat{X}_{2,n}=0\}.
\]
This $\tau_0$ is the first hitting time to the subspace $\hat{\mathbb{S}}_0=\{(x_1,x_2,r,j)\in\mathbb{Z}^2\times\{0,1\}\times S_0; x_2=0\}$, which corresponds to the subspace $\{(x_1,x_2,j)\in\mathbb{Z}^2\times S_0; x_2=x_1\ \mbox{or}\ x_2=x_1+1\}$ of the original MA-process (see Fig.\,\ref{fig:fig31}). 
For $k\ge 1$, $x_1,x_1'\in\mathbb{Z}$ and $(r,j),(r',j')\in\{0,1\}\times S_0$, let $\hat{g}^{(k)}_{(r,j),(r',j')}(x_1,x_1')$ be the probability that the MA-process $\{\hat{\bY}_n\}$ starting from $(x_1,k,r,j)$ visits a state in $\hat{\mathbb{S}}_0$ for the first time and the state is $(x_1',0,r',j')$, i.e.,
\[
\hat{g}^{(k)}_{(r,j),(r',j')}(x_1,x_1') 
= \mathbb{P}\big( \hat{\bY}_{\tau_0}=(x_1',0,r',j'),\,\tau_0<\infty \,\big|\,\hat{\bY}_0=(x_1,k,r,j) \big).
\]
We denote the matrix of them by $\hat{G}^{(k)}_{x_1,x_1'}$, i.e., $\hat{G}^{(k)}_{x_1,x_1'}=\big(\hat{g}^{(k)}_{(r,j),(r',j')}(x_1,x_1'); (r,j),(r',j')\in\{0,1\}\times S_0\big)$. When $k=1$, we omit the superscript $(k)$ such as $\hat{G}^{(1)}_{x_1,x_1'}=\hat G_{x_1,x_1'}$. 
Since $\{\hat{\bY}_n\}$ is space-homogeneous, we have $\hat{G}^{(k)}_{x_1,x_1'}= \hat{G}^{(k)}_{0,x_1'-x_1}$. By the strong Markov property, for $k\ge 1$ and $x_1\in\mathbb{Z}$, $\hat{\Phi}_{(0,k),(x_1,0)}$ is represented in terms of $\hat{G}^{(k)}_{0,x_1'}$ as 
\begin{equation}
\hat{\Phi}_{(0,k),(x_1,0)} 
= \sum_{x_1'=-\infty}^\infty \hat{G}^{(k)}_{0,x_1'} \hat{\Phi}_{(x_1',0),(x_1,0)}, 
\end{equation}
and this leads us to 
\begin{equation}
\hat{\Phi}_{(0,k),*}(z)
= \hat{G}^{(k)}_{0,*}(z) \hat{\Phi}_{(0,0),*}(z), 
\label{eq:hatPhiG_eq1}
\end{equation}
where 
\begin{equation}
\hat{G}^{(k)}_{0,*}(z) = \sum_{x_1'=-\infty}^\infty z^{x_1'} \hat{G}^{(k)}_{0,x_1'}  
\label{eq:hatG_def0}
\end{equation}
and we use \eqref{eq:hatPhi_eq1}. Since $\{\hat{\bY}_n\}$ is skip free and space-homogeneous, we have by the strong Markov property that
\begin{equation}
\hat{G}^{(k)}_{0,*}(z) = \hat{G}_{0,*}(z)^k. 
\label{eq:Gk_eq1}
\end{equation}
As a result, by \eqref{eq:hatphic2_def}, \eqref{eq:hatPhi_eq1}, \eqref{eq:hatPhiG_eq1} and \eqref{eq:Gk_eq1}, we obtain 
\begin{align}
&\hat{\bvarphi}^{\bc}_2(z) = \sum_{k=1}^\infty\ \sum_{i_2\in\{-1,0,1\}} \hat{\bnu}_{(0,k)} (\hat{A}^{\{2\}}_{*,i_2}(z)-\hat{A}^{\{1,2\}}_{*,i_2}(z))\, \hat{G}_{0,*}(z)^{k+i_2} \hat{\Phi}_{(0,0),*}(z).
\label{eq:hatphic2_eq1}
\end{align}

%
%
We make a preparation for analyzing $\hat{\bvarphi}^{\bc}_2(z)$ through \eqref{eq:hatphic2_eq1}. Define a matrix generating function of transition probability blocks as 
\begin{align*}
&\hat{A}^{\{1,2\}}_{*,*}(z_1,z_2)=\sum_{i_1,i_2\in\{-1,0,1\}} z_1^{i_1} z_2^{i_2} \hat{A}^{\{1,2\}}_{i_1,i_2} .
\end{align*}
Define a domain $\hat{\Gamma}^{\{1,2\}}$ as
\[
\hat{\Gamma}^{\{1,2\}} = \{(\theta_1,\theta_2)\in\mathbb{R}^2; \spr(\hat{A}^{\{1,2\}}_{*,*}(e^{\theta_1},e^{\theta_2})) < 1 \}, 
\]
whose closure is a convex set, and define the extreme values $\hat\theta_1^{min}$ and $\hat\theta_1^{max}$ of $\hat{\Gamma}^{\{1,2\}}$ as 
\[
\hat\theta_1^{min} = \inf\{\theta_1\in\mathbb{R}; (\theta_1,\theta_2)\in\hat{\Gamma}^{\{1,2\}} \},\quad 
\hat\theta_1^{max} = \sup\{\theta_1\in\mathbb{R}; (\theta_1,\theta_2)\in\hat{\Gamma}^{\{1,2\}} \}. 
\]
%
\begin{figure}[tb]
\begin{center}
\includegraphics[width=60mm,trim=0 0 0 0]{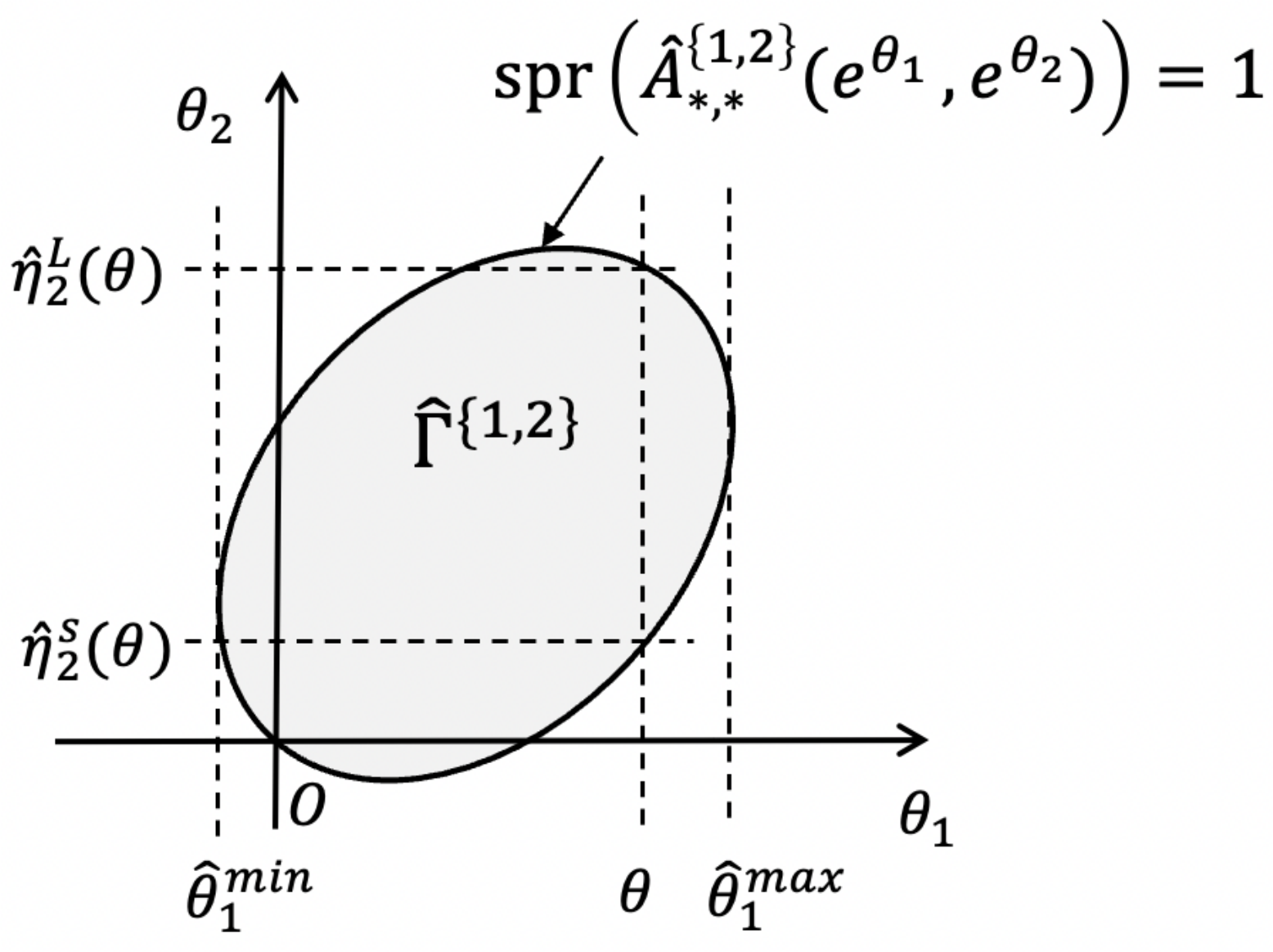} 
\caption{Domain $\hat\Gamma^{\{1,2\}}$}
\label{fig:fig32}
\end{center}
\end{figure}
For $\theta_1\in[\hat\theta_1^{min},\hat\theta_1^{max}]$, let $\hat{\eta}_2^s(\theta_1)$ and $\hat{\eta}_2^L(\theta_1)$ be the two real roots to the following equation:
\[
\spr(\hat{A}^{\{1,2\}}_{*,*}(e^{\theta_1},e^{\theta_2}))=1,
\]
counting multiplicity, where $\hat{\eta}_2^s(\theta_1)\le\hat{\eta}_2^L(\theta_1)$ (see Fig.\ \ref{fig:fig32}). 
The matrix generating function $\hat{G}_{0,*}(z)$ corresponds to a so-called G-matrix in the matrix analytic method of the queueing theory (see, for example, Neuts \cite{Neuts94}).  For $\theta\in[\hat\theta_1^{min},\hat\theta_1^{max}]$, consider the following matrix quadratic equation:
\begin{equation}
\hat{A}^{\{1,2\}}_{*,-1}(e^\theta) + \hat{A}^{\{1,2\}}_{*,0}(e^\theta) X + \hat{A}^{\{1,2\}}_{*,1}(e^\theta) X^2 = X, 
\label{eq:hatG_eq}
\end{equation}
then $\hat{G}_{0,*}(e^\theta)$ is given by the minimum nonnegative solution to the equation, and we have by Lemma 2.5 of Ozawa \cite{Ozawa21} that
\begin{equation}
\spr(\hat{G}_{0,*}(e^\theta))=e^{\hat{\eta}_2^s(\theta)}. 
\end{equation}
Let $\alpha_1(z)$ be the maximum eigenvalue of $\hat{G}_{0,*}(z)$ and $\alpha_i(z),\,i=2,3...,2s_0,$ be other eigenvalues, counting multiplicity. We have, for $\theta\in[\hat\theta_1^{min},\hat\theta_1^{max}]$, $\alpha_1(e^\theta)=\spr(\hat{G}_{0,*}(e^\theta))=e^{\hat\eta_2^s(\theta)}$. 
$\hat{G}_{0,*}(z)$ satisfies the following properties. 
\begin{proposition} \label{pr:hatG_properties}
\begin{itemize}
\item[(i)] $\hat{G}_{0,*}(z)$ is absolutely convergent and entry-wise analytic in the open annual domain $\Delta_{e^{\hat\theta_1^{min}},e^{\hat\theta_1^{max}}}$.  
\item[(ii)] For every $z\in\Delta_{e^{\hat\theta_1^{min}},e^{\hat\theta_1^{max}}}$,  $\spr(\hat{G}_{0,*}(z))\le \spr(\hat{G}_{0,*}(|z|))=e^{\hat{\eta}_2^s(\log |z|)}$. Furthermore, if $z\ne |z|$, then $\spr(\hat{G}_{0,*}(z))<\spr(\hat{G}_{0,*}(|z|))=e^{\hat{\eta}_2^s(\log |z|)}$. 
\item[(iii)] For every $\theta\in[\hat\theta_1^{min},\hat\theta_1^{max}]$, $\alpha_1(e^\theta)> |\alpha_i(e^\theta)|$ for every $i\in\{2,3,...,2s_0\}$. 
\end{itemize}
\end{proposition}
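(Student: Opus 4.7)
The three claims are Perron--Frobenius-type facts for the parameterized G-matrix $\hat G_{0,*}(z)$, and my plan is to establish them in order, with each feeding into the next.

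For (i), my first step is to establish entry-wise finiteness of $\hat G_{0,*}(e^{\theta_1})$ for every real $\theta_1\in(\hat\theta_1^{min},\hat\theta_1^{max})$ by an exponential-martingale argument. Pick $\theta_2$ with $(\theta_1,\theta_2)\in\hat\Gamma^{\{1,2\}}$ (possible since the open convex set $\hat\Gamma^{\{1,2\}}$ projects onto the open interval $(\hat\theta_1^{min},\hat\theta_1^{max})$), set $\lambda=\spr(\hat A^{\{1,2\}}_{*,*}(e^{\theta_1},e^{\theta_2}))<1$, and let $\bw>\bzero$ be the associated right Perron eigenvector. Then
\[
M_n=\lambda^{-n}\, e^{\theta_1\hat X_{1,n}+\theta_2\hat X_{2,n}}\,\bw_{\hat J_n}
\]
is a nonnegative martingale for $\{\hat\bY_n\}$. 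Applying Fatou's lemma at the stopping time $\tau_0$ and using $\lambda^{-\tau_0}\ge 1$ would give
\[
\mathbb{E}\!\left[e^{\theta_1\hat X_{1,\tau_0}}\,\bw_{\hat J_{\tau_0}}\,\mathbf{1}(\tau_0<\infty)\,\big|\,\hat\bY_0=(0,1,r,j)\right]\le e^{\theta_2}\bw_{(r,j)}<\infty.
\]
Because $\bw$ is strictly positive with finitely many components, this yields entry-wise finiteness of $\hat G_{0,*}(e^{\theta_1})$, and nonnegativity of the coefficient blocks upgrades it to absolute convergence of the Laurent series at $z=e^{\theta_1}$. Varying $\theta_1$ over the open interval, I would conclude absolute convergence and entry-wise analyticity on the full annulus by the standard theory of Laurent series.

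For (ii), the entry-wise bound $|\hat G_{0,*}(z)|\le \hat G_{0,*}(|z|)$ is immediate from nonnegativity of $\hat G_{0,x_1'}$ and the triangle inequality; monotonicity of the spectral radius on nonnegative matrices, combined with Lemma 2.5 of Ozawa \cite{Ozawa21}, gives the weak inequality $\spr(\hat G_{0,*}(z))\le\spr(\hat G_{0,*}(|z|))=e^{\hat\eta_2^s(\log|z|)}$. For strict inequality when $z\ne|z|$, I would invoke Wielandt's theorem: if $|B|\le A$ with $A$ irreducible nonnegative and $\spr(B)=\spr(A)$, then $B=\omega D A D^{-1}$ for some $|\omega|=1$ and diagonal unitary $D$. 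Applying this with $A=\hat G_{0,*}(|z|)$, $B=\hat G_{0,*}(z)$ would force the support of $[\hat G_{0,x_1'}]_{(r,j),(r',j')}$ in $x_1'$ to lie in a single arithmetic progression; Assumption \ref{as:Y12_onZpmZmp_irreducible} (translated via Remark \ref{re:Y12_onZpmZmp_irreducible} into the $\hat\bY$ coordinates), together with the aperiodicity of $\{\bY^{\{1,2\}}_n\}$, then rules this out unless $z=|z|$.

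For (iii), I would apply Perron--Frobenius theory to the nonnegative matrix $\hat G_{0,*}(e^\theta)$. The identification $\alpha_1(e^\theta)=\spr(\hat G_{0,*}(e^\theta))=e^{\hat\eta_2^s(\theta)}$ is already in hand, and simplicity with strict modulus dominance over $\alpha_i(e^\theta)$ for $i\ge 2$ follows once irreducibility and aperiodicity of $\hat G_{0,*}(e^\theta)$ are established; both properties descend from Assumption \ref{as:Y12_onZpmZmp_irreducible} through the first-hitting interpretation of the matrices $\hat G_{0,x_1'}$.

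The hardest part will be the strict spectral-radius inequality in (ii) and the aperiodicity needed in (iii): both require a careful translation of the half-space irreducibility/aperiodicity hypotheses into a statement about the joint support of $\hat G_{0,x_1'}$ across $x_1'\in\mathbb Z$, the phase $j\in S_0$, and the two-valued remainder coordinate $r\in\{0,1\}$, and the extra $r$-coordinate introduces bookkeeping that must be handled carefully. The boundary points $\theta=\hat\theta_1^{min},\hat\theta_1^{max}$ in (iii), where the two real roots $\hat\eta_2^s,\hat\eta_2^L$ coalesce, also require extra care; a continuity/limit argument from the interior, combined with the purely combinatorial nature of irreducibility and aperiodicity of $\hat G_{0,*}(e^\theta)$ (which do not depend on $\theta$ as long as the defining sums are finite), should close this gap.
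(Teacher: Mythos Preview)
Your approach is largely sound and in parts more self-contained than the paper's, but there is one genuine oversight affecting (ii) and (iii).

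The paper's proof is terse by comparison: for (i) it simply asserts that the Laurent series is absolutely convergent on the closed annulus (relying on the general matrix-analytic theory of G-matrices) and then invokes standard Laurent-series analyticity; for (ii) it cites Lemmas 4.1 and 4.3 of \cite{Ozawa18}; for (iii) it cites Appendix C of \cite{Ozawa21}. Your exponential-martingale argument for (i) is a valid and more elementary route that makes the convergence explicit rather than deferring to prior G-matrix theory, and your Wielandt/Perron--Frobenius strategy for (ii)--(iii) is essentially the content of the cited lemmas, spelled out.

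The gap is your assumption that $\hat G_{0,*}(e^\theta)$ is irreducible. In general it need not be: as the paper notes (via Appendix C of \cite{Ozawa21}), what Assumption \ref{as:Y12_onZpmZmp_irreducible} actually yields is that every \emph{column} of $\hat G_{0,*}(e^\theta)$ is either strictly positive or identically zero. Zero columns arise whenever some phase state $(r',j')$ is never the phase at the first hitting of $\hat{\mathbb S}_0$; with such columns the matrix is reducible, so your direct appeal to Wielandt in (ii) (which requires $A$ irreducible) and to Perron--Frobenius irreducibility in (iii) fails as stated. The repair is easy---restrict to the indices with nonzero columns, on which the matrix is primitive, carries the full spectral radius, and supports both arguments unchanged---but you must make this restriction explicit. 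The paper's formulation, that $\hat G_{0,*}(e^\theta)$ ``has just one primitive class,'' is precisely the correct statement, and it is enough for the simplicity and strict dominance of $\alpha_1(e^\theta)$ in (iii) and (after the same restriction) for the Wielandt step in (ii).
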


\begin{proof}
Since $\hat{G}_{0,*}(z)$ is given by Laurent series \eqref{eq:hatG_def0} and it is absolutely convergent in the closure of $\Delta_{e^{\hat\theta_1^{min}},e^{\hat\theta_1^{max}}}$, we immediately obtain part (i) of the proposition (see, for example, Section II.1 of Markushevich \cite{Markushevich05}).
By Lemma 4.1 of Ozawa and Kobayashi \cite{Ozawa18}, for every $z\in\Delta_{e^{\hat\theta_1^{min}},e^{\hat\theta_1^{max}}}$, $\spr(\hat{G}_{0,*}(z))\le\spr(|\hat{G}_{0,*}(z)|)\le\spr(\hat{G}_{0,*}(|z|))$, and we obtain the first half of part (ii) of the proposition. The second half is obtained by part (i) of Lemma 4.3 of Ref.\ \cite{Ozawa18}.
Since, under Assumption \ref{as:Y12_onZpmZmp_irreducible}, the lossy Markov chain derived from $\{\hat{\bY}_n\}$ by restricting the state space to $\mathbb{Z}\times\mathbb{Z}_+\times\{0,1\}\times S_0$ is irreducible (see Remark \ref{re:Y12_onZpmZmp_irreducible}), every column of $\hat{G}_{0,*}(e^\theta)$ is positive or zero (see Appendix C of Ozawa \cite{Ozawa21}; a result similar to that holding for rate matrices also holds for G-matrices). 
Hence, nonnegative matrix $\hat{G}_{0,*}(e^\theta)$ has just one primitive class (irreducible and aperiodic class), and this implies part (iii) of the proposition. 
\end{proof}

%
%
We get back to \eqref{eq:hatphic2_eq1} and apply results in Ref.\ \cite{Ozawa18}. 
For $i\in\{1,2\}$, define $\theta_i^*$ and $\theta_i^\dagger$ as
\begin{align}
&\theta_i^* = \sup\{\theta_i\in\mathbb{R}: (\theta_1,\theta_2)\in\Gamma^{\{i\}} \},\quad 
\theta_i^\dagger=\sup\{\theta_i; (\theta_1,\theta_2)\in\Gamma^{\{3-i\}}\cap\Gamma^{\{1,2\}} \}, 
\label{eq:thetai_sd}
\end{align}
then we have $\xi_{(1,0)}=\min\{\theta_1^*,\theta_1^\dagger\}$ and $\xi_{(0,1)}=\min\{\theta_2^*,\theta_2^\dagger\}$; For another equivalent definition of $\theta_i^*$ and $\theta_i^\dagger$ and for the properties of $\xi_{(1,0)}$ and $\xi_{(0,1)}$, see Appendix \ref{sec:block_2dQBD_results}. 
Since $\hat{\bnu}_{(0,k)}=\begin{pmatrix} \bnu_{(0,2k)} & \bnu_{(0,2k+1)} \end{pmatrix}$ for $k\ge 0$, the radius of convergence of the power series of the sequence $\{\hat{\bnu}_{(0,k)}\}_{k\ge 0}$ is given by $e^{2\xi_{(0,1)}}$. Taking this point into account, we define $\hat\theta_1^\dagger$ and $\hat\theta_1^{\dagger,\xi}$ as
\[
\hat\theta_1^\dagger = \max\{\theta\in[\hat\theta_1^{min},\hat\theta_1^{max}]; \hat\eta_2^s(\theta)\le 2\theta_2^* \},\quad 
\hat\theta_1^{\dagger,\xi} = \max\{\theta\in[\hat\theta_1^{min},\hat\theta_1^{max}]; \hat\eta_2^s(\theta)\le 2\xi_{(0,1)} \}.
\]
Since $\xi_{(0,1)}=\min\{\theta_2^*,\theta_2^\dagger\}$, we have $\hat\theta_1^\dagger\ge \hat\theta_1^{\dagger,\xi}$. 
The following is a key proposition for analyzing $\hat{\bvarphi}^{\bc}_2(z)$.
\begin{proposition} \label{pr:hatxi2c_eq}
\begin{itemize}
\item[(i)] We always have $\hat\xi_{\bc,2}\ge \hat\theta_1^{\dagger,\xi}$.
\item[(ii)] If $\hat\theta_1^\dagger<\hat\theta_1^{max}$ and $\theta_2^*<\theta_2^\dagger$, then $\hat\bvarphi^{\bc}_2(z)$ is elementwise analytic in $\Delta_{1,e^{\hat\theta_1^\dagger}}\cup(\partial\Delta_{e^{\hat\theta_1^\dagger}}\setminus\{e^{\hat\theta_1^\dagger}\})$. 
\item[(iii)] If $\hat\theta_1^\dagger<\hat\theta_1^{max}$ and $\theta_2^*<\theta_2^\dagger$, then $\hat\xi_{\bc,2}= \hat\theta_1^\dagger$ and, for some positive vector $\hat\bg^{\bc}_2$, 
\begin{equation}
\lim_{\theta\,\uparrow\,\hat\theta_1^\dagger} (e^{\hat\theta_1^\dagger}-e^{\theta}) \hat\bvarphi^{\bc}_2(e^\theta) = \hat\bg^{\bc}_2.
\label{eq:hatvarphi_2_limit0}
\end{equation}
\end{itemize}
\end{proposition}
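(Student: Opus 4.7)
The plan is to exploit the representation \eqref{eq:hatphic2_eq1} together with the spectral structure of the G-matrix $\hat G_{0,*}(z)$ given by Proposition \ref{pr:hatG_properties}. Set $C_{i_2}(z)=\hat A^{\{2\}}_{*,i_2}(z)-\hat A^{\{1,2\}}_{*,i_2}(z)$ and write $\boldsymbol{\Psi}(w)=\sum_{k=1}^\infty w^k\,\hat\bnu_{(0,k)}$ for the vector generating function of the boundary sequence; its radius of convergence is $e^{2\xi_{(0,1)}}$.

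Part (i) follows from a direct term-by-term bound. Gelfand's formula combined with Proposition \ref{pr:hatG_properties}(ii) gives $\limsup_k\|\hat G_{0,*}(z)^k\|^{1/k}\le e^{\hat\eta_2^s(\log|z|)}$, while the Cauchy--Hadamard theorem gives $\limsup_k\|\hat\bnu_{(0,k)}\|^{1/k}=e^{-2\xi_{(0,1)}}$. Hence for $|z|=e^\theta$ with $\theta<\hat\theta_1^{\dagger,\xi}$ the defining inequality $\hat\eta_2^s(\theta)<2\xi_{(0,1)}$ forces the general term of \eqref{eq:hatphic2_eq1} to decay geometrically, so the series converges absolutely and $\hat\xi_{\bc,2}\ge\hat\theta_1^{\dagger,\xi}$.

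For parts (ii) and (iii), the hypothesis $\theta_2^*<\theta_2^\dagger$ gives $\xi_{(0,1)}=\theta_2^*$ and $\hat\theta_1^{\dagger,\xi}=\hat\theta_1^\dagger$. I would then isolate the Perron contribution via the spectral decomposition $\hat G_{0,*}(z)^k = \alpha_1(z)^k\,\Pi_1(z) + R(z)^k$, where $\Pi_1(z)$ is the spectral projector for the dominant eigenvalue $\alpha_1(z)$---simple on the closed annulus $\bar\Delta_{1,e^{\hat\theta_1^\dagger}}$ by Proposition \ref{pr:hatG_properties}(ii)--(iii)---and $R(z)$ has $\spr(R(z))<\alpha_1(z)$ there. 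Substituting into \eqref{eq:hatphic2_eq1} produces
\[
\hat\bvarphi^{\bc}_2(z) = \boldsymbol{\Psi}\bigl(\alpha_1(z)\bigr)\,D_1(z) + \hat\bvarphi^{\bc}_{2,R}(z),
\]
with $D_1(z)=\sum_{i_2}\alpha_1(z)^{i_2}\,C_{i_2}(z)\,\Pi_1(z)\,\hat\Phi_{(0,0),*}(z)$ analytic, and the remainder $\hat\bvarphi^{\bc}_{2,R}(z)$---formed analogously with $R(z)$ in place of $\hat G_{0,*}(z)$---analytic on a strictly larger annulus since $\spr(R(z))<\alpha_1(z)$. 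The analyticity assertion in (ii) then reduces to verifying $|\alpha_1(z)|<e^{2\theta_2^*}$ throughout $\Delta_{1,e^{\hat\theta_1^\dagger}}\cup(\partial\Delta_{e^{\hat\theta_1^\dagger}}\setminus\{e^{\hat\theta_1^\dagger}\})$, which follows directly from Proposition \ref{pr:hatG_properties}(ii) (strict inequality off the positive real ray).

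For (iii), I would invoke the result from Ref.~\cite{Ozawa18} that under $\theta_2^*<\theta_2^\dagger$ the boundary sequence has the purely geometric asymptotic $\bnu_{(0,k)}\sim \bv_0\,e^{-k\theta_2^*}$ with a positive vector $\bv_0$, so $\boldsymbol{\Psi}(w)$ has a simple pole at $w=e^{2\theta_2^*}$ with positive residue vector; since $\alpha_1$ is analytic at $e^{\hat\theta_1^\dagger}$ with $\alpha_1'(e^{\hat\theta_1^\dagger})>0$ (by implicit differentiation of $\spr(\hat A^{\{1,2\}}_{*,*}(e^\theta,e^{\hat\eta_2^s(\theta)}))=1$), this transfers to a simple pole of $\hat\bvarphi^{\bc}_2(z)$ at $z=e^{\hat\theta_1^\dagger}$, yielding both $\hat\xi_{\bc,2}=\hat\theta_1^\dagger$ and \eqref{eq:hatvarphi_2_limit0}. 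The main obstacle will be the rigorous spectral splitting on the complex annulus---establishing simplicity of $\alpha_1(z)$ away from $z=e^{\hat\theta_1^\dagger}$, analytic selection of $\Pi_1(z)$, and convergence of $\hat\bvarphi^{\bc}_{2,R}(z)$ slightly beyond $|z|=e^{\hat\theta_1^\dagger}$---together with transferring the one-dimensional pure geometric decay from Ref.~\cite{Ozawa18} through the composition with $\alpha_1$.
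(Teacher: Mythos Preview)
Your overall strategy matches the paper's: exploit \eqref{eq:hatphic2_eq1}, control the powers $\hat G_{0,*}(z)^k$ via the spectral radius, and transfer the simple pole of the boundary generating function $\boldsymbol{\Psi}$ through the composition with the Perron eigenvalue. Part (i) is fine and essentially identical to the paper's argument (the paper writes it via a Jordan decomposition, you via Gelfand, but the content is the same).

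There is, however, a genuine gap in your treatment of part (ii). You assert that $\alpha_1(z)$ is simple on the whole closed annulus $\bar\Delta_{1,e^{\hat\theta_1^\dagger}}$, citing Proposition \ref{pr:hatG_properties}(ii)--(iii). But (iii) only gives simplicity for \emph{real} $z=e^\theta$, and (ii) says nothing about multiplicity off the positive axis. For a general complex $z$ in the annulus there is no reason the top eigenvalue of $\hat G_{0,*}(z)$ should be simple, so the global analytic projector $\Pi_1(z)$ you need for the splitting may not exist. The paper sidesteps this entirely: it treats $\ba(z,X)=\sum_k \hat\bnu_{(0,k)}D(z,\hat G_{0,*}(z))X^{k-1}$ as an analytic function of $z$ and the $4s_0^2$ entries of $X$ on the open set $\{\spr(X)<e^{2\theta_2^*}\}$ (using Lemma 3.2 of \cite{Ozawa18} to pass from scalar to matrix argument), and then simply composes with the analytic map $z\mapsto \hat G_{0,*}(z)$, which lands in that set throughout $\Delta_{1,e^{\hat\theta_1^\dagger}}\cup(\partial\Delta_{e^{\hat\theta_1^\dagger}}\setminus\{e^{\hat\theta_1^\dagger}\})$ by Proposition \ref{pr:hatG_properties}(ii). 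No spectral splitting, no simplicity needed. Your approach can be rescued (split only in a complex neighbourhood of the real segment where simplicity persists by perturbation, and elsewhere bound the full matrix power directly), but as written the justification is incorrect.

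For part (iii) your outline is essentially the paper's (the paper also isolates the top-eigenvalue contribution, now legitimately since one works on the real segment where simplicity holds), but you do not address positivity of the limit vector $\hat\bg^{\bc}_2$. This is not free: your matrix $D_1(z)$ contains the differences $C_{i_2}(z)=\hat A^{\{2\}}_{*,i_2}(z)-\hat A^{\{1,2\}}_{*,i_2}(z)$, which have entries of both signs, so the residue could in principle vanish or be non-positive. The paper handles this by showing explicitly that $\hat\bu_U(e^{2\theta_2^*})\,D(e^{\hat\theta_1^\dagger},e^{2\theta_2^*})\,\hat\bv_G(e^{\hat\theta_1^\dagger})>0$ via Lemma 5.5 of \cite{Ozawa18}, and that $\hat\bu_G(e^{\hat\theta_1^\dagger})\hat\Phi_{(0,0),*}(e^{\hat\theta_1^\dagger})$ is positive by irreducibility of $P^{\{1,2\}}$. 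You would need an analogous step.
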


%
%
Before proving the proposition, we give one more proposition. 
Let ${}^{(1,2)}\bnu_{(0,k)}, k\in\mathbb{N},$ be the stationary probability vectors in the (1,2)-block state process $\{{}^{(1,2)}\bY_n\}$ derived from the original 2d-QBD process; See Subsection \ref{sec:block_state_process} and Appendix \ref{sec:block_2dQBD_results}. Since $\{{}^{(1,2)}\bY_n\}$ is a 2d-QBD process, we can apply results in Ref.\ \cite{Ozawa18}. 
Define a vector generating function $\hat\bvarphi_2(z)$ as 
\[
\hat\bvarphi_2(z) = \sum_{k=1}^\infty z^k \hat{\bnu}_{(0,k)}, 
\] 
which is identical to ${}^{(1,2)}\bvarphi_2(z) = \sum_{k=1}^\infty z^k\, {}^{(1,2)}{\bnu}_{(0,k)}$ since we have $\hat\bnu_{(0,k)}=\begin{pmatrix} \bnu_{(0,2k)} & \bnu_{(0,2k+1)} \end{pmatrix}={}^{(1,2)}\bnu_{(0,k)}$ for every $k\in\mathbb{N}$. 
If ${}^{(1,2)}\theta_2^*=2 \theta_2^*<{}^{(1,2)}\theta_2^\dagger=2 \theta_2^\dagger$ (for the definitions of ${}^{(1,2)}\theta_2^*$ and ${}^{(1,2)}\theta_2^\dagger$, see Appendix \ref{sec:block_2dQBD_results}), then $\{{}^{(1,2)}\bY_n\}$ is classified into Type I ($\psi_2(\bar z_2^*)>1$) or Type II in the notation of Ref.\ \cite{Ozawa18}, where inequality $\psi_2(\bar z_2^*)>1$ corresponds to ${}^{(1,2)}\theta_2^*<{}^{(1,2)}\theta_2^{max}$. 
In our case, inequality $\theta_2^*<\theta_2^\dagger$ implies this condition since $\theta_2^\dagger\le \theta_2^{max}$ and ${}^{(1,2)}\theta_2^{max}=2 \theta_2^{max}$. Therefore, if $\theta_2^*<\theta_2^\dagger$, we see by Corollary 5.1 of Ref.\ \cite{Ozawa18} that $z=e^{{}^{(1,2)}\theta_2^*}=e^{2\theta_2^*}$ is a pole of ${}^{(1,2)}\bvarphi_2(z)$, and the same property also holds for $\hat\bvarphi_2(z)$. 
Define $\hat{U}_2(z)$ as
\[
\hat{U}_2(z) = \hat{A}^{\{2\}}_{0,*}(z)+\hat{A}^{\{2\}}_{1,*}(z) \hat{G}_{*,0}(z), 
\]
where $\hat{G}_{*,0}(z)$ is the G-matrix generated from the triplet $\{\hat{A}^{\{1,2\}}_{-1,*}(z),\hat{A}^{\{1,2\}}_{0,*}(z),\hat{A}^{\{1,2\}}_{1,*}(z)\}$ (see Subsection 4.1 of Ref.\ \cite{Ozawa18}) and satisfies the following matrix quadratic equation:
\begin{equation}
\hat{A}^{\{1,2\}}_{-1,*}(z) + \hat{A}^{\{1,2\}}_{0,*}(z) X + \hat{A}^{\{1,2\}}_{1,*}(z) X^2 = X. 
\end{equation}
By the definition, $\hat{U}_2(z)$ is identical to ${}^{(1,2)}{U}_2(z)$ of the $(1,2)$-block state process (for the definition of ${}^{(1,2)}{U}_2(z)$, see Appendix \ref{sec:block_2dQBD_results}). Let $\hat\bu_{U}(z)$ and $\hat\bv_{U}(z)$ be the left and right eigenvectors of $\hat{U}_2(z)$ with respect to the maximum eigenvalue of $\hat{U}_2(z)$, satisfying $\hat\bu_{U}(z)\hat\bv_{U}(z)=1$. 
By Corollary 5.1 of Ref.\ \cite{Ozawa18}, considering correspondence between $\hat\bvarphi_2(z)$ and ${}^{(1,2)}\bvarphi_2(z)$, we immediately obtain the following. 
\begin{proposition} \label{pr:hatvarphi2_pole}
If $\theta_2^*<\theta_2^\dagger$, then $\hat\eta_2^s(\hat\theta_1^\dagger)=2\theta_2^*$ and for some positive constant $\hat g_2$, 
\begin{equation}
\lim_{\theta\,\uparrow\,2\theta_2^*} (e^{2\theta_2^*}-e^{\theta}) \hat\bvarphi_2(e^\theta) = \hat g_2 \hat\bu_{U}(e^{2\theta_2^*}), 
\label{eq:hatvarphi12_limit}
\end{equation}
where $\hat\bu_{U}(e^{2\theta_2^*})$ is positive. 
\end{proposition}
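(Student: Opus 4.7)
The plan is to reduce the statement directly to Corollary 5.1 of Ozawa and Kobayashi \cite{Ozawa18} applied to the $(1,2)$-block state process $\{{}^{(1,2)}\bY_n\}$. The excerpt already establishes $\hat{\bnu}_{(0,k)}={}^{(1,2)}\bnu_{(0,k)}$ for every $k\in\mathbb{N}$, hence $\hat{\bvarphi}_2(z)={}^{(1,2)}\bvarphi_2(z)$, and also records the identifications $\hat U_2(z)={}^{(1,2)}U_2(z)$ and $\hat\bu_U(z)={}^{(1,2)}\bu_U(z)$. So the proof reduces to checking that $\{{}^{(1,2)}\bY_n\}$ fits into the framework of that corollary and translating the conclusion back into our notation.

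First I verify the hypothesis of Corollary 5.1 of \cite{Ozawa18}, which requires ${}^{(1,2)}\bY_n\}$ to be of Type I with $\psi_2(\bar z_2^*)>1$ or Type II; as the excerpt notes, this condition is equivalent to ${}^{(1,2)}\theta_2^*<{}^{(1,2)}\theta_2^{max}$. Using the block-scaling identities ${}^{(1,2)}\theta_2^*=2\theta_2^*$, ${}^{(1,2)}\theta_2^\dagger=2\theta_2^\dagger$, and ${}^{(1,2)}\theta_2^{max}=2\theta_2^{max}$ (listed in Appendix \ref{sec:block_2dQBD_results}), the assumed inequality $\theta_2^*<\theta_2^\dagger$ yields ${}^{(1,2)}\theta_2^*<{}^{(1,2)}\theta_2^\dagger\le {}^{(1,2)}\theta_2^{max}$, which is exactly what is needed. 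Corollary 5.1 of \cite{Ozawa18} then asserts that $z=e^{{}^{(1,2)}\theta_2^*}=e^{2\theta_2^*}$ is a simple pole of ${}^{(1,2)}\bvarphi_2(z)$ and that $(e^{2\theta_2^*}-e^\theta)\,{}^{(1,2)}\bvarphi_2(e^\theta)$ converges as $\theta\uparrow 2\theta_2^*$ to a positive scalar multiple of the left Perron eigenvector of ${}^{(1,2)}U_2(e^{2\theta_2^*})$; transcribing with our identifications gives \eqref{eq:hatvarphi12_limit} with some positive constant $\hat g_2$.

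It remains to show $\hat\eta_2^s(\hat\theta_1^\dagger)=2\theta_2^*$ and that $\hat\bu_U(e^{2\theta_2^*})$ is strictly positive. For the eigenvalue identity, the pole at $e^{2\theta_2^*}$ supplied by Corollary 5.1 is, in the notation of \cite{Ozawa18}, the largest real $z$ at which the Perron eigenvalue of ${}^{(1,2)}U_2(z)$ attains the value $1$; translating this through the G-matrix characterization of $\hat U_2(z)$ and the relation $\spr(\hat G_{0,*}(e^\theta))=e^{\hat\eta_2^s(\theta)}$ (invoked earlier via Lemma 2.5 of \cite{Ozawa21}) identifies the pole with the unique solution of $\hat\eta_2^s(\theta)=2\theta_2^*$ on $[\hat\theta_1^{min},\hat\theta_1^{max}]$, which by continuity of $\hat\eta_2^s$ and the definition of $\hat\theta_1^\dagger$ must be $\hat\theta_1^\dagger$ itself. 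Positivity of $\hat\bu_U(e^{2\theta_2^*})$ is a Perron--Frobenius consequence once irreducibility of $\hat U_2(e^{2\theta_2^*})$ is established, which follows from Assumption \ref{as:Y12_onZpmZmp_irreducible} together with Remark \ref{re:Y12_onZpmZmp_irreducible} by the same type of argument used in part (iii) of Proposition \ref{pr:hatG_properties}.

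The main obstacle is bookkeeping: carefully matching the pole location furnished by Corollary 5.1 of \cite{Ozawa18} (phrased there in terms of $z_2^*$ and the spectral radius of ${}^{(1,2)}U_2$) with the present paper's curve $\hat\eta_2^s(\theta)=2\theta_2^*$, and making sure the Type I/II classification under block scaling really does follow from the minimal hypothesis $\theta_2^*<\theta_2^\dagger$. Once these identifications are pinned down, the statement is an immediate corollary.
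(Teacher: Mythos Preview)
Your overall approach matches the paper's: both reduce the proposition to Corollary 5.1 of \cite{Ozawa18} applied to the $(1,2)$-block state process, via the identifications $\hat\bvarphi_2={}^{(1,2)}\bvarphi_2$ and $\hat U_2={}^{(1,2)}U_2$, and both verify the Type~I/II hypothesis through the scaling relations ${}^{(1,2)}\theta_2^*=2\theta_2^*$, ${}^{(1,2)}\theta_2^\dagger=2\theta_2^\dagger$, ${}^{(1,2)}\theta_2^{max}=2\theta_2^{max}$. For the limit \eqref{eq:hatvarphi12_limit} and the positivity of $\hat\bu_U(e^{2\theta_2^*})$ your argument is correct and essentially identical to the paper's.

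There is, however, a genuine confusion in your treatment of the identity $\hat\eta_2^s(\hat\theta_1^\dagger)=2\theta_2^*$. You attempt to extract it from the pole location by ``translating through the G-matrix characterization of $\hat U_2(z)$ and the relation $\spr(\hat G_{0,*}(e^\theta))=e^{\hat\eta_2^s(\theta)}$,'' but this conflates two different objects: $\hat U_2(z)$ is built from $\hat G_{*,0}(z)$ (the G-matrix for the triplet $\{\hat A^{\{1,2\}}_{-1,*},\hat A^{\{1,2\}}_{0,*},\hat A^{\{1,2\}}_{1,*}\}$), not from $\hat G_{0,*}(z)$, so the spectral relation you invoke is irrelevant to $\hat U_2$. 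Moreover, the pole $e^{2\theta_2^*}$ lives in the $z$-variable of $\hat\bvarphi_2$, whereas $\hat\theta_1^\dagger$ is a value of the \emph{first} coordinate in the $\hat\Gamma^{\{1,2\}}$ plane; these are different variables, so ``identifying the pole with the solution of $\hat\eta_2^s(\theta)=2\theta_2^*$'' does not make sense as written. The identity is instead immediate from the definition $\hat\theta_1^\dagger=\max\{\theta:\hat\eta_2^s(\theta)\le 2\theta_2^*\}$: once $\theta_2^*<\theta_2^\dagger\le\theta_2^{max}$ forces the constraint to bind, continuity of $\hat\eta_2^s$ gives equality at the maximizer. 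The paper treats this as self-evident and offers no separate argument.
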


Note that, under Assumption \ref{as:Y12_onZpmZmp_irreducible}, the modulus of every eigenvalue of $\hat{G}_{*,0}(e^{2\theta_2^*})$ except for the maximum one is less than $\spr(\hat{G}_{*,0}(e^{2\theta_2^*}))$ (see Proposition \ref{pr:hatG_properties}), and it is not necessary for using Corollary 5.1 of Ref.\ \cite{Ozawa18} to assume all the eigenvalues of $\hat{G}_{*,0}(e^{2\theta_2^*})$ are distinct (i.e., Assumption 2.5 of Ref.\ \cite{Ozawa18}). 
  
%
\begin{proof}[Proof of Proposition \ref{pr:hatxi2c_eq}]
Temporary, define $D(z,w)$ as 
\[
D(z,w) = \hat A^{\{2\}}_{*,-1}(z)+\hat A^{\{2\}}_{*,0}(z) w+\hat A^{\{2\}}_{*,1}(z) w^2- I w, 
\]
where $w$ is a matrix or scalar. By \eqref{eq:hatphic2_eq1} and \eqref{eq:hatG_eq}, $\hat{\bvarphi}^{\bc}_2(z)$ is represented as 
\[
\hat{\bvarphi}^{\bc}_2(z) = \ba(z,\hat{G}_{0,*}(z)) \hat{\Phi}_{(0,0),*}(z), 
\] 
where
\[
\ba(z,w) = \sum_{k=1}^\infty\ \hat{\bnu}_{(0,k)} D(z,\hat{G}_{0,*}(z)) w^{k-1}. 
\]
Later, we will prove that $\hat\theta_1^{min}=\theta_{\bc}^{min}$ and $\hat\theta_1^{max}=\theta_{\bc}^{max}$ (see \eqref{eq:hattheta_eq1}). Hence, by Proposition \ref{pr:Phic_finite} and expression \eqref{eq:hatPhixs_def}, $\hat{\Phi}_{(0,0),*}(z)$ is absolutely convergent in $\Delta_{e^{\hat\theta_1^{min}},e^{\hat\theta^{max}_1}}$.
We, therefore, focus on $\ba(z,\hat{G}_{0,*}(z))$. Let $\hat{G}_{0,*}(z)=V(z) J(z) V(z)^{-1}$ be the Jordan decomposition of $\hat{G}_{0,*}(z)$. Since $\hat{G}_{0,*}(z)^{k-1}=V(z) J(z)^{k-1} V(z)^{-1}$, we have 
\begin{equation}
\ba(z,\hat{G}_{0,*}(z))^\top = (V(z)^{-1})^\top \sum_{k=1}^\infty \left(\hat{\bnu}_{(0,k)} \otimes (J(z)^\top)^{k-1}\right)\, \vec\big((D(z,\hat{G}_{0,*}(z))V(z))^\top\big), 
\label{eq:a_theta_eq}
\end{equation}
where $\otimes$ is the Kronecker product and we use the identity $\vec(ABC)=(C^\top\otimes A)\, \vec(B)$ for matrices $A$, $B$ and $C$ (for  the identity, see Horn and Johnson \cite{Horn91}). 
Define a real value $\theta'_1$ as
\begin{equation}
\theta'_1=\arg\min_{\theta\in[\hat\theta_1^{min},\hat\theta_1^{max}]} \hat\eta_2^s(\theta), 
\label{eq:theta1p}
\end{equation}
then $\hat\eta_2^s(\theta)$ is strictly  increasing in $(\theta'_1,\hat\theta_1^{max})$.  Hence, by part (ii) of Proposition \ref{pr:hatG_properties}, for every $z\in\Delta_{e^{\theta'_1},e^{\hat\theta_1^{\dagger,\xi}}}$, $\spr(\hat{G}_{0,*}(z))\le e^{\hat\eta_2^s(\log |z|)}<e^{\hat\eta_2^s(\hat\theta_1^{\dagger,\xi})}$. 
Since $e^{2\xi_{(0,1)}}$ is the radius of convergence of the power series of the sequence $\{\hat{\bnu}_{(0,k)}\}_{k\ge 0}$, we see that, for every $i\in\{1,2,...,2 s_0\}$, each entry of $\sum_{k=1}^\infty [\hat{\bnu}_{(0,k)}]_i\, (J(z)^\top)^{k-1}$ is absolutely convergent in $z\in\Delta_{e^{\theta'_1},e^{\hat\theta_1^{\dagger,\xi}}}$. As a result, $\hat{\bvarphi}^{\bc}_2(z)$ as well as $\ba(z)$ is absolutely convergent in $\Delta_{e^{\theta'_1},e^{\hat\theta_1^{\dagger,\xi}}}$ and we obtain $\hat\xi_{\bc,2}\ge \hat\theta_1^{\dagger,\xi}$. This completes the proof of part (i) of the proposition. 

%
Next, supposing $\hat\theta_1^\dagger< \hat\theta_1^{max}$, we consider the case where $\theta_2^*<\theta_2^\dagger$. In this case, we have $\hat\eta_2^s(\hat\theta_1^\dagger)= 2\xi_{(0,1)}=2\theta_2^*<2 \theta_2^{max}$ and $\hat\theta_1^\dagger=\hat\theta_1^{\dagger,\xi}$ since $\xi_{(0,1)}=\min\{\theta_2^*,\theta_2^\dagger\}$ and $\theta_2^\dagger\le \theta_2^{max}$. 
We prove part (ii) of the proposition in a manner similar to that used in the proof of Proposition 5.1 of Ref.\ \cite{Ozawa18}, which is given in Ozawa and Kobayashi \cite{Ozawa18b}. 
Let $X=(x_{k,l})$ be an $2 s_0\times 2 s_0$ complex matrix. For $z\in\Delta_{e^{\hat\theta_1^{min}},e^{\hat\theta_1^{max}}}$, if $|w|<e^{\hat\eta_2^s(\hat\theta_1^\dagger)}$, $\ba(z,w)$ is absolutely convergent, and by Lemma 3.2 of Ref.\ \cite{Ozawa18}, we see that if $\spr(X)<e^{\hat\eta_2^s(\hat\theta_1^\dagger)}$, each element of $a(z,X)$ is absolutely convergent. 
This implies that each element of $a(z,X)$ is analytic as a complex function of $4 s_0^2+1$ variables in $\{(z,x_{kl}; k,l=1,2,...,2 s_0)\in\mathbb{C}^{4 s_0^2+1}; e^{\hat\theta_1^{min}}<|z|<e^{\hat\theta_1^{max}}, \spr(X)<e^{\hat\eta_2^s(\hat\theta_1^\dagger)}\}$. 
By parts (i) and (ii) of Proposition \ref{pr:hatG_properties}, for any $z_0\in\Delta_{e^{\theta_1'},e^{\hat\theta_1^\dagger}}\cup(\partial\Delta_{e^{\hat\theta_1^\dagger}}\setminus\{e^{\hat\theta_1^\dagger}\})$, $\hat{G}_{0,*}(z)$ is entry-wise analytic at $z=z_0$ and $\spr(\hat{G}_{0,*}(z_0))<e^{\hat\eta_2^s(\hat\theta_1^\dagger)}$, where $\theta_1'$ is given by \eqref{eq:theta1p}. 
Hence, the composite function $\ba(z,\hat{G}_{0,*}(z))$ is elementwise analytic in $z\in\Delta_{e^{\theta_1'},e^{\hat\theta_1^\dagger}}\cup(\partial\Delta_{e^{\hat\theta_1^\dagger}}\setminus\{e^{\hat\theta_1^\dagger}\})$. 
Under Assumption \ref{as:2dQBD_stable}, since we have $\hat\eta_2^s(\hat\theta_1^\dagger)=2\theta_2^*>0$,  $\hat\eta_2^s(\theta_1')\le 0$ and $\hat\eta_2^s(0)\le 0$, if $\theta_1'>0$ then $\hat\eta_2^s(\theta)\le 0<\hat\eta_2^s(\theta_1^\dagger)$ for every $\theta\in[0,\theta_1']$. Hence, we can replace $e^{\theta_1'}$ with $e^0=1$ and see that $\ba(z,\hat{G}_{0,*}(z))$ is elementwise analytic in $z\in\Delta_{1,e^{\hat\theta_1^\dagger}}\cup(\partial\Delta_{e^{\hat\theta_1^\dagger}}\setminus\{e^{\hat\theta_1^\dagger}\})$. 
By Proposition \ref{pr:Phic_finite} and expression \eqref{eq:hatPhixs_def}, $\hat{\Phi}_{(0,0),*}(z)$ is also entry-wise analytic in the same domain. 
This completes the proof of part (ii) of the proposition. 

%
Finally, supposing $\hat\theta_1^\dagger< \hat\theta_1^{max}$, we consider the case where $\theta_2^*<\theta_2^\dagger$ again. 
By part (iii) of Proposition \ref{pr:hatG_properties}, $\spr(\hat{G}_{0,*}(e^{\hat\theta_1^\dagger}))=e^{\hat\eta_2^s(\hat\theta_1^\dagger)}=e^{2\theta_2^*}$ is a simple eigenvalue of $\hat{G}_{0,*}(e^{\hat\theta_1^\dagger})$, and the modulus of every eigenvalue of $\hat{G}_{0,*}(e^{\hat\theta_1^\dagger})$ except for $e^{2\theta_2^*}$ is less than $e^{2\theta_2^*}$. 
Hence, we have, for every $i\in\{1,2,...,2 s_0\}$, 
\[
\lim_{\theta\,\uparrow\,\hat\theta_1^\dagger} (e^{\hat\theta_1^\dagger}-e^{\theta}) \sum_{k=1}^\infty\, [\hat{\bnu}_{(0,k)}]_i\, (J(e^{\theta})^\top)^{k-1} 
= \lim_{\theta\,\uparrow\,\hat\theta_1^\dagger} (e^{\hat\theta_1^\dagger}-e^{\theta}) [\hat{\bvarphi}_2(e^{\hat\eta_2^s(\theta)})]_i\, e^{-\hat\eta_2^s(\theta)}\, \diag \!\begin{pmatrix} 1 & 0 & \cdots & 0 \end{pmatrix}, 
\]
where we assume $[J(e^\theta)]_{1,1}=\alpha_1(e^\theta)=e^{\hat\eta_2^s(\theta)}$ without loss of generality.  By \eqref{eq:a_theta_eq}, this leads us to 
\begin{align}
\lim_{\theta\,\uparrow\,\hat\theta_1^\dagger} (e^{\hat\theta_1^\dagger}-e^{\theta}) \ba(e^\theta,\hat{G}_{0,*}(e^\theta))
&= \lim_{\theta\,\uparrow\,\hat\theta_1^\dagger} (e^{\hat\theta_1^\dagger}-e^{\theta})\,\hat{\bvarphi}_2(e^{\hat\eta_2^s(\theta)})\, e^{-\hat\eta_2^s(\hat\theta_1^\dagger)} D(e^{\hat\theta_1^\dagger},e^{\hat\eta_2^s(\hat\theta_1^\dagger)}) \hat\bv_G(e^{\hat\theta_1^\dagger}) \hat\bu_G(e^{\hat\theta_1^\dagger}), 
\end{align}
where $\hat\bu_G(e^\theta)$ and $\hat\bv_G(e^\theta)$ are the left and right eigenvectors of $\hat{G}_{0,*}(e^\theta)$ with respect to the eigenvalue $e^{\hat\eta_2^s(\theta)}$, satisfying $\hat\bu_G(e^\theta) \hat\bv_G(e^\theta) =1$.
By Proposition \ref{pr:hatvarphi2_pole}, we have
\begin{align}
\lim_{\theta\,\uparrow\,\hat\theta_1^\dagger} (e^{\hat\theta_1^\dagger}-e^{\theta})\,\hat{\bvarphi}_2(e^{\hat\eta_2^s(\theta)})
&= \lim_{\theta\,\uparrow\,\hat\theta_1^\dagger} \frac{e^{\hat\theta_1^\dagger}-e^{\theta}}{e^{\hat\eta_2^s(\hat\theta_1^\dagger)}-e^{\hat\eta_2^s(\theta)}} (e^{\hat\eta_2^s(\hat\theta_1^\dagger)}-e^{\hat\eta_2^s(\theta)}) \hat{\bvarphi}_2(e^{\hat\eta_2^s(\theta)}) \cr
&= \hat g'_2 \hat\bu_U(e^{\hat\eta_2^s(\hat\theta_1^\dagger)}), 
\end{align}
where $\hat g'_2=\hat g_2 e^{\hat\theta_1^\dagger-\hat\eta_2^s(\hat\theta_1^\dagger)}/\hat\eta_{2,\theta}^s(\hat\theta_1^\dagger)$, $\hat\eta_{2,\theta}^s(x)=\frac{d}{dx}\hat\eta_{2}^s(x)$ and $\hat g_2$ is a positive constant. Since $\hat\eta_2^s(\theta)$ is strictly  increasing in $(\theta'_1,\hat\theta_1^{max})$, we have $\hat\eta_{2,\theta}^s(\hat\theta_1^\dagger)>0$, and this implies $\hat g'_2>0$.
As a result, we obtain 
\begin{equation}
 \lim_{\theta\,\uparrow\,\hat\theta_1^\dagger} (e^{\hat\theta_1^\dagger}-e^{\theta}) \hat\bvarphi^{\bc}_2(\theta) 
= \hat g'_2 e^{-\hat\eta_2^s(\hat\theta_1^\dagger)} \hat\bu_U(e^{\hat\eta_2^s(\hat\theta_1^\dagger)}) D(e^{\hat\theta_1^\dagger},e^{\hat\eta_2^s(\hat\theta_1^\dagger)}) \hat\bv_G(e^{\hat\theta_1^\dagger}) \hat\bu_G(e^{\hat\theta_1^\dagger}) \hat{\Phi}_{(0,0),*}(\hat\theta_1^\dagger).
\label{eq:atheta_limit}
\end{equation}
In a manner similar to that used in the proof of Lemma 5.5 (part (1)) of Ref.\ \cite{Ozawa18}, it can be seen that $\hat\bu_U(e^{\hat\eta_2^s(\hat\theta_1^\dagger)}) D(e^{\hat\theta_1^\dagger},e^{\hat\eta_2^s(\hat\theta_1^\dagger)}) \hat\bv_G(e^{\hat\theta_1^\dagger}) >0$. 
Since $P^{\{1,2\}}$ is irreducible, $\hat{\Phi}_{(0,0),*}(\hat\theta_1^\dagger)$ is positive, and it implies that $\hat\bu_G(\hat\theta_1^\dagger) \hat{\Phi}_{(0,0),*}(\hat\theta_1^\dagger)$ is also positive. 
This completes the proof of part (iii) of the proposition. 
\end{proof}

\begin{remark} \label{re:hatGc_eigen}
In Ref.\ \cite{Ozawa18}, the matrix corresponding to $\hat{G}_{0,*}(\hat\theta_1^\dagger)$ is assumed to have distinct eigenvalues, but that assumption is not necessary in our case. 
In the proof of Proposition \ref{pr:hatxi2c_eq}, the condition required for $\hat{G}_{0,*}(e^\theta)$ is that when $\theta=\hat\theta_1^\dagger$, the maximum eigenvalue $\alpha_1(e^{\hat\theta_1^\dagger})$ is simple and satisfies $\alpha_1(e^{\hat\theta_1^\dagger})> |\alpha_i(e^{\hat\theta_1^\dagger})|$ for every $i\in\{2,3,...,2s_0\}$. As a condition ensuring this point, we have adopted Assumption \ref{as:Y12_onZpmZmp_irreducible}. Under the assumption, the same property also holds for every direction vector in $\mathbb{N}^2$, see the following subsection. 
\end{remark}

%
%
Proposition \ref{pr:hatxi2c_eq} is represented in terms of the parameters given based on the MA-process $\{\hat\bY_n\}$ such as $\hat\theta_1^{max}$ and $\hat\theta_1^\dagger$. We redefined those parameters so that they are given based on the induced MA-process $\{\bY^{\{1,2\}}_n\}$. 
Define a matrix generating function $B(z_1,z_2)$ as
\begin{align*}
B(z_1,z_2)
&= [\hat{A}^{\{1,2\}}_{*,-1}(z_1)]_{0,0}\, z_2^{-2} + [\hat{A}^{\{1,2\}}_{*,-1}(z_1)]_{0,1}\, z_2^{-1} \cr
&\qquad + [\hat{A}^{\{1,2\}}_{*,0}(z_1)]_{0,0} + [\hat{A}^{\{1,2\}}_{*,0}(z_1)]_{0,1}\, z_2 + [\hat{A}^{\{1,2\}}_{*,1}(z_1)]_{0,0}\, z_2^2 \cr
&= A^{\{1,2\}}_{1,-1} z_1 z_2^{-2} + A^{\{1,2\}}_{1,0} z_1 z_2^{-1} + A^{\{1,2\}}_{0,-1} z_2^{-1} + A^{\{1,2\}}_{-1,-1} z_1^{-1} + A^{\{1,2\}}_{0,0} + A^{\{1,2\}}_{1,1} z_1 \cr
&\qquad + A^{\{1,2\}}_{-1,0} z_1^{-1} z_2 + A^{\{1,2\}}_{0,1} z_2 + A^{\{1,2\}}_{-1,1} z_1^{-1} z_2, 
\end{align*}
where, for a block matrix $A$, we denote by $[A]_{i,j}$ the $(i,j)$-block of $A$. This matrix function satisfies
\begin{equation}
B(e^{\theta_1},e^{\theta_2}) = A^{\{1,2\}}_{*,*}(e^{\theta_1-\theta_2},e^{\theta_2}).
\end{equation}
By Remark 2.4 of Ozawa \cite{Ozawa21}, we have
\begin{equation}
\spr(\hat{A}^{\{1,2\}}_{*,*}(e^{\theta_1},e^{\theta_2})) = \spr(B(e^{\theta_1},e^{\theta_2/2})), 
\end{equation}
and this leads us to
\begin{equation}
\spr(\hat{A}^{\{1,2\}}_{*,*}(e^{\theta_1},e^{\theta_2})) = \spr(A^{\{1,2\}}_{*,*}(e^{\theta_1-\theta_2/2},e^{\theta_2/2})). 
\end{equation}
%
\begin{figure}[t]
\begin{center}
\includegraphics[width=90mm,trim=0 0 0 0]{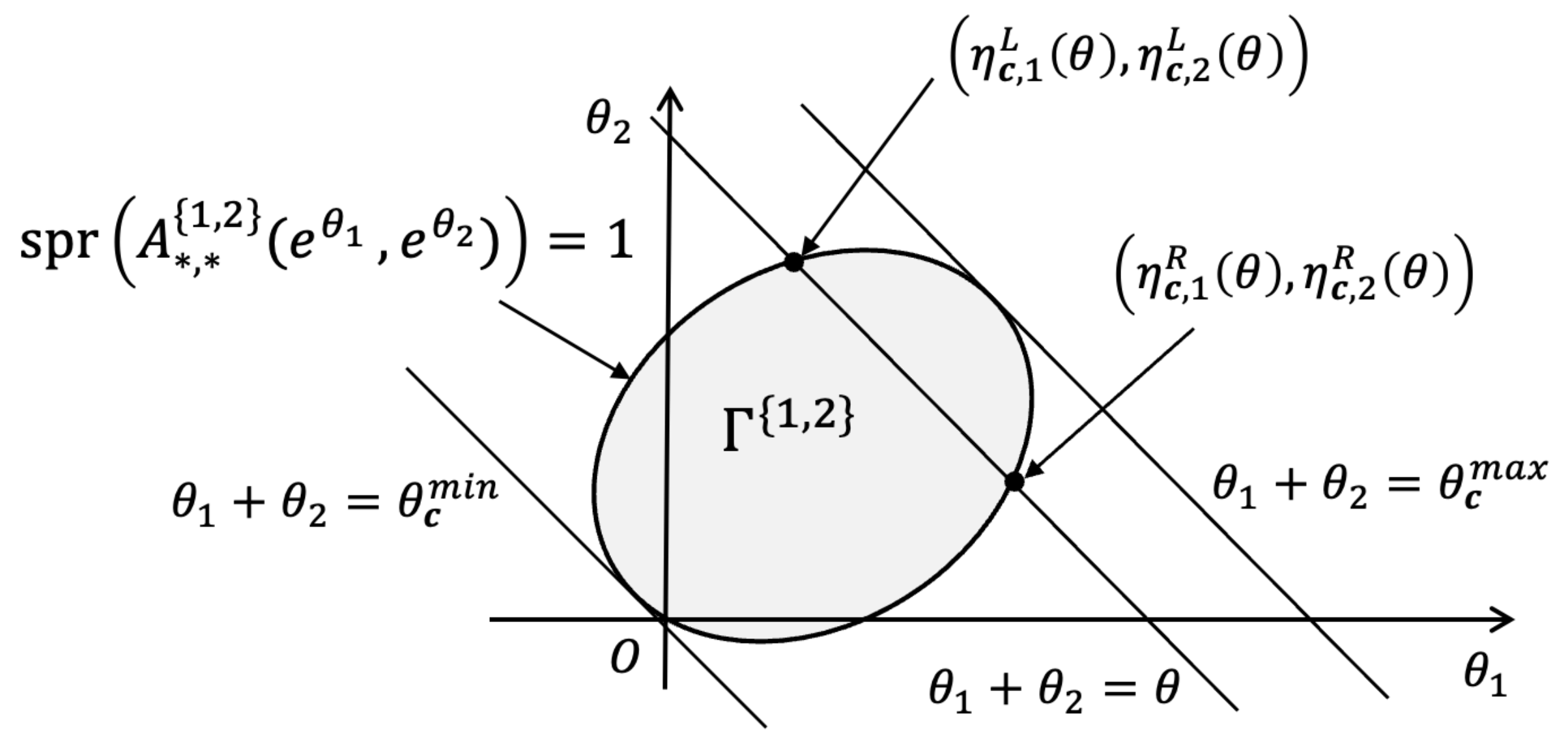} 
\caption{Domain $\Gamma^{\{1,2\}}$}
\label{fig:fig33}
\end{center}
\end{figure}
%
For $\theta\in[\theta_{\bc}^{min},\theta_{\bc}^{max} ]$, let $(\eta^R_{\bc,1}(\theta),\eta^R_{\bc,2}(\theta))$ and $(\eta^L_{\bc,1}(\theta), \eta^L_{\bc,2}(\theta))$ be the two real roots of the simultaneous equations: 
\begin{equation}
\spr(A^{\{1,2\}}_{*,*}(e^{\theta_1},e^{\theta_2}))=1,\quad \theta_1+\theta_2=\theta, 
\label{eq:simuleq_11}
\end{equation}
counting multiplicity, where $\eta^L_{\bc,1}(\theta)\le \eta^R_{\bc,1}(\theta)$ and $\eta^L_{\bc,2}(\theta))\ge \eta^R_{\bc,2}(\theta)$ (see Fig.\ \ref{fig:fig33}).
Since equation $\spr(\hat{A}^{\{1,2\}}_{*,*}(e^{\theta_1},e^{\theta_2}))=1$ is equivalent to $\spr(A^{\{1,2\}}_{*,*}(e^{\theta_1-\theta_2/2},e^{\theta_2/2}))=1$, we have 
\begin{equation}
\hat\theta_1^{min} = \theta_{\bc}^{min},\quad 
\hat\theta_1^{max} = \theta_{\bc}^{max},\quad 
\hat{\eta}_2^s(\theta_1)=2 \eta^R_{\bc,2}(\theta_1), 
\label{eq:hattheta_eq1}
\end{equation}
and  $\hat\theta_1^\dagger$ and $\hat\theta_1^{\dagger,\xi}$ are given by
\begin{align}
&\hat\theta_1^\dagger = \max\{\theta\in[\theta_{\bc}^{min},\theta_{\bc}^{max}]; \eta^R_{\bc,2}(\theta)\le \theta_2^* \}, 
\label{eq:hattheta1dagger_eq} \\
&\hat\theta_1^{\dagger,\xi} = \max\{\theta\in[\theta_{\bc}^{min},\theta_{\bc}^{max}]; \eta^R_{\bc,2}(\theta)\le \xi_{(0,1)} \}.
\label{eq:hattheta1daggerxi_eq}
\end{align}
Hereafter, we denote $\hat\theta_1^\dagger$ and $\hat\theta_1^{\dagger,\xi}$ by $\theta_{\bc,2}^\dagger$ and $\theta_{\bc,2}^{\dagger,\xi}$, respectively, and use \eqref{eq:hattheta1dagger_eq} and \eqref{eq:hattheta1daggerxi_eq} as their definitions. Note that, for $\theta_{\bc,2}^\dagger$ and $\theta_{\bc,2}^{\dagger,\xi}$, we use subscript ``2" instead of ``1" since they are defined by using $\theta_2^*$ and $\xi_{(0,1)}$. 
$\theta_{\bc,2}^\dagger$ has already been defined in Section \ref{sec:intro}. Here we redefine it for the case of $\bc=(1,1)$. After, we also redefine $\theta_{\bc,1}^\dagger$. 
In terms of these parameters, we rewrite Proposition \ref{pr:hatxi2c_eq} as follows. 
\begin{corollary} \label{co:varphic2_pole}
\begin{itemize}
\item[(i)] We always have $\hat\xi_{\bc,2}\ge \theta_{\bc,2}^{\dagger,\xi}$.
\item[(ii)] If $\theta_{\bc,2}^\dagger<\theta_{\bc}^{max}$ and $\theta_2^*<\theta_2^\dagger$, then $\hat\bvarphi^{\bc}_2(z)$ is elementwise analytic in $\Delta_{1,e^{\theta_{\bc,2}^\dagger}}\cup(\partial\Delta_{e^{\theta_{\bc,2}^\dagger}}\setminus\{e^{\theta_{\bc,2}^\dagger}\})$. 
\item[(iii)] If $\theta_{\bc,2}^\dagger<\theta_{\bc}^{max}$ and $\theta_2^*<\theta_2^\dagger$, then $\hat\xi_{\bc,2}= \theta_{\bc,2}^\dagger$ and, for some positive vector $\hat\bg^{\bc}_2$, 
\begin{equation}
\lim_{\theta\,\uparrow\,\theta_{\bc,2}^\dagger} (e^{\theta_{\bc,2}^\dagger}-e^{\theta}) \hat\bvarphi^{\bc}_2(e^\theta) = \hat\bg^{\bc}_2.
\label{eq:hatvarphi_2_limit1}
\end{equation}
\end{itemize}
\end{corollary}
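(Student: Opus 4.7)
My plan is to prove Corollary \ref{co:varphic2_pole} as a direct translation of Proposition \ref{pr:hatxi2c_eq}: parts (i)--(iii) of the corollary are literally parts (i)--(iii) of the proposition, but stated in the ``natural'' parameters of the induced MA-process $\{\bY_n^{\{1,2\}}\}$ rather than the ``artificial'' parameters associated with $\{\hat\bY_n\}$. The proof thus reduces to establishing the three identifications
\[
\hat\theta_1^{min}=\theta_{\bc}^{min},\quad \hat\theta_1^{max}=\theta_{\bc}^{max},\quad \hat\eta_2^s(\theta)=2\,\eta^R_{\bc,2}(\theta),
\]
from which $\hat\theta_1^\dagger=\theta_{\bc,2}^\dagger$ and $\hat\theta_1^{\dagger,\xi}=\theta_{\bc,2}^{\dagger,\xi}$ follow immediately from their defining conditions.

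The first step is the spectral-radius identity. Using the block structure of $\hat A^{\{1,2\}}_{i_1,i_2}$ together with Remark 2.4 of \cite{Ozawa21} (already invoked in the text to relate $\spr(\hat A^{\{1,2\}}_{*,*})$ to $\spr(B(z_1,z_2^{1/2}))$), a direct computation of $B(e^{\theta_1},e^{\theta_2})$ gives
\[
B(e^{\theta_1},e^{\theta_2})=A^{\{1,2\}}_{*,*}(e^{\theta_1-\theta_2},e^{\theta_2}),
\]
so that
\[
\spr\!\bigl(\hat A^{\{1,2\}}_{*,*}(e^{\theta_1},e^{\theta_2})\bigr)=\spr\!\bigl(A^{\{1,2\}}_{*,*}(e^{\theta_1-\theta_2/2},e^{\theta_2/2})\bigr).
\]
This is precisely the computation already displayed in the paragraph preceding the corollary. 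Consequently $\hat\Gamma^{\{1,2\}}$ is the image of $\Gamma^{\{1,2\}}$ under the invertible affine map $(\theta_1,\theta_2)\mapsto(\theta_1+\theta_2/2,2\theta_2)$; projecting onto the first coordinate and recalling that $\bc=(1,1)$ so that $\langle\bc,\btheta\rangle=\theta_1+\theta_2$ yields $\hat\theta_1^{min}=\theta_{\bc}^{min}$ and $\hat\theta_1^{max}=\theta_{\bc}^{max}$.

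The second step is to match $\hat\eta_2^s$ with $\eta^R_{\bc,2}$. For $\theta\in[\theta_{\bc}^{min},\theta_{\bc}^{max}]$, the equation $\spr(\hat A^{\{1,2\}}_{*,*}(e^\theta,e^{\theta_2}))=1$ becomes $\spr(A^{\{1,2\}}_{*,*}(e^{\theta-\theta_2/2},e^{\theta_2/2}))=1$; writing $\theta_1':=\theta-\theta_2/2$ and $\theta_2':=\theta_2/2$ turns it into the simultaneous system \eqref{eq:simuleq_11} with $\theta_1'+\theta_2'=\theta$. The two real roots in $\theta_2$ therefore correspond under the scaling $\theta_2=2\theta_2'$ to the two roots $(\eta^{L/R}_{\bc,1}(\theta),\eta^{L/R}_{\bc,2}(\theta))$ of \eqref{eq:simuleq_11}; selecting the smaller one gives $\hat\eta_2^s(\theta)=2\eta^R_{\bc,2}(\theta)$. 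Plugging this into the definitions of $\hat\theta_1^\dagger$ and $\hat\theta_1^{\dagger,\xi}$ and comparing with \eqref{eq:hattheta1dagger_eq}--\eqref{eq:hattheta1daggerxi_eq} gives $\hat\theta_1^\dagger=\theta_{\bc,2}^\dagger$ and $\hat\theta_1^{\dagger,\xi}=\theta_{\bc,2}^{\dagger,\xi}$.

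With these identifications in hand, parts (i), (ii) and (iii) of the corollary follow verbatim from the corresponding parts of Proposition \ref{pr:hatxi2c_eq}, with the vector $\hat\bg_2^{\bc}$ being the one produced in \eqref{eq:atheta_limit}. The only substantive work is the spectral-radius identity of the first step; once that is in place the corollary is a rewriting, and no step should pose a genuine obstacle. I do not expect any analytic difficulty beyond what is already absorbed into Proposition \ref{pr:hatxi2c_eq}.
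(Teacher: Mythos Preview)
Your approach is correct and essentially identical to the paper's: the corollary is presented there as a direct rewriting of Proposition \ref{pr:hatxi2c_eq} once the identifications \eqref{eq:hattheta_eq1} (i.e., $\hat\theta_1^{min}=\theta_{\bc}^{min}$, $\hat\theta_1^{max}=\theta_{\bc}^{max}$, $\hat\eta_2^s(\theta)=2\eta^R_{\bc,2}(\theta)$) have been established via exactly the spectral-radius computation you outline. One minor slip: the affine map carrying $\Gamma^{\{1,2\}}$ onto $\hat\Gamma^{\{1,2\}}$ is $(\theta_1,\theta_2)\mapsto(\theta_1+\theta_2,\,2\theta_2)$, not $(\theta_1,\theta_2)\mapsto(\theta_1+\theta_2/2,\,2\theta_2)$; with the corrected first coordinate your projection argument indeed recovers $\langle\bc,\btheta\rangle=\theta_1+\theta_2$ and the rest goes through unchanged.
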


%
%
Define $\hat\bvarphi^{\bc}_1(z)=\begin{pmatrix}\hat\bvarphi^{\bc}_{1,1}(z) & \hat\bvarphi^{\bc}_{1,2}(z)\end{pmatrix}$  and $\hat\xi_{\bc,1}$ analogously to $\hat\bvarphi^{\bc}_2(z)=\begin{pmatrix}\hat\bvarphi^{\bc}_{2,1}(z) & \hat\bvarphi^{\bc}_{2,2}(z)\end{pmatrix}$  and $\hat\xi_{\bc,2}$, respectively. Then, we have $\xi_{\bc,1} \ge \hat\xi_{\bc,1}$.
Define $\theta_{\bc,1}^\dagger$ and $\theta_{\bc,1}^{\dagger,\xi}$ as 
\begin{align}
&\theta_{\bc,1}^\dagger = \max\{\theta\in[\theta_{\bc}^{min},\theta_{\bc}^{max}]; \eta^L_{\bc,1}(\theta)\le \theta_1^* \}, 
\label{eq:hattheta1dagger2_eq} \\
&\theta_{\bc,1}^{\dagger,\xi} = \max\{\theta\in[\theta_{\bc}^{min},\theta_{\bc}^{max}]; \eta^L_{\bc,1}(\theta)\le \xi_{(1,0)} \}.
\label{eq:hattheta2dagger_eq}
\end{align}
With respect to $\hat\bvarphi^{\bc}_1(e^\theta)$, interchangeing the $x_1$-axis with the $x_2$-axis, we immediately obtain by Corollary \ref{co:varphic2_pole} the following. 
\begin{corollary} \label{co:varphic1_pole}
\begin{itemize}
\item[(i)] We always have $\hat\xi_{\bc,1}\ge \theta_{\bc,1}^{\dagger,\xi}$.
\item[(ii)] If $\theta_{\bc,1}^\dagger<\theta_{\bc}^{max}$ and $\theta_1^*<\theta_1^\dagger$, then $\hat\bvarphi^{\bc}_1(z)$ is elementwise analytic in $\Delta_{1,e^{\theta_{\bc,1}^\dagger}}\cup(\partial\Delta_{e^{\theta_{\bc,1}^\dagger}}\setminus\{e^{\theta_{\bc,1}^\dagger}\})$. 
\item[(iii)] If $\theta_{\bc,1}^\dagger<\theta_{\bc}^{max}$ and $\theta_1^*<\theta_1^\dagger$, then $\hat\xi_{\bc,1}= \theta_{\bc,1}^\dagger$ and, for some positive vector $\hat\bg^{\bc}_1$, 
\begin{equation}
\lim_{\theta\,\uparrow\,\theta_{\bc,1}^\dagger} (e^{\theta_{\bc,1}^\dagger}-e^{\theta}) \hat\bvarphi^{\bc}_1(e^\theta) = \hat\bg^{\bc}_1.
\label{eq:hatvarphi_1_limit1}
\end{equation}
\end{itemize}
\end{corollary}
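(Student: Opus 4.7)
The plan is to derive Corollary \ref{co:varphic1_pole} by running the argument of Proposition \ref{pr:hatxi2c_eq} and Corollary \ref{co:varphic2_pole} in the symmetric set-up obtained by swapping the roles of $x_1$ and $x_2$. Concretely, I would introduce a skip-free MA-process $\{\hat{\bY}'_n\}=\{(\hat{X}'_{1,n},\hat{X}'_{2,n}),(\hat{R}'_n,\hat{J}'_n)\}$ by setting $\hat{X}'_{1,n}=X^{\{1,2\}}_{2,n}$ and letting $\hat{X}'_{2,n},\hat{R}'_n$ be the quotient and remainder of $X^{\{1,2\}}_{1,n}-X^{\{1,2\}}_{2,n}$ divided by $2$, so that the subspace $\hat{X}'_{2,n}=0$ again encodes the diagonal $X^{\{1,2\}}_{1,n}\approx X^{\{1,2\}}_{2,n}$. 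The transition-probability blocks $\hat{A}'^{\{1,2\}}_{i_1,i_2}$ and the one-sided boundary blocks $\hat{A}'^{\{1\}}_{i_1,i_2}$ arise from $A^{\{1,2\}}_{i_1,i_2}$ and $A^{\{1\}}_{i_1,i_2}$ in exactly the same pattern as in Subsection 3.2 with the coordinate axes interchanged, and Assumption \ref{as:Y12_onZpmZmp_irreducible}(ii) plays, via Remark \ref{re:Y12_onZpmZmp_irreducible}, the role that Assumption \ref{as:Y12_onZpmZmp_irreducible}(i) played in the original argument.

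I would then rewrite $\hat\bvarphi^{\bc}_1(z)$ in terms of the G-matrix $\hat{G}'_{0,*}(z)$ of the new MA-process and of the stationary vectors $\bnu_{(k,0)}$, obtaining a formula analogous to \eqref{eq:hatphic2_eq1}. From there, the three-step argument establishing Proposition \ref{pr:hatxi2c_eq} transports directly: part (i) follows from the radius of convergence $e^{2\xi_{(1,0)}}$ of the power series in $\{\bnu_{(k,0)}\}_{k\ge 0}$ combined with part (ii) of Proposition \ref{pr:hatG_properties} applied to $\hat{G}'_{0,*}(z)$; part (ii) follows by combining that analyticity with the analogue of Proposition \ref{pr:hatvarphi2_pole} obtained from the $(1,2)$-block state process through its $x_2=0$ boundary instead of its $x_1=0$ boundary; and part (iii) is obtained by the Jordan-decomposition calculation of $\hat{G}'_{0,*}(z)$ exactly as in \eqref{eq:a_theta_eq}--\eqref{eq:atheta_limit}.

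The main obstacle is translating the spectral parameters correctly. Under the coordinate interchange, the equation $\spr(\hat{A}'^{\{1,2\}}_{*,*}(e^{\theta_1},e^{\theta_2}))=1$ again reduces to $\spr(A^{\{1,2\}}_{*,*}(e^{\theta_1'},e^{\theta_2'}))=1$ along $\theta_1'+\theta_2'=\theta$, but the relevant root of $\hat{G}'_{0,*}(e^\theta)$ is now the one for which $\theta_1'$ is \emph{smaller}, namely $(\eta^L_{\bc,1}(\theta),\eta^L_{\bc,2}(\theta))$, rather than the root with $\theta_2'$ smaller used for Corollary \ref{co:varphic2_pole}. This is precisely what produces definitions \eqref{eq:hattheta1dagger2_eq} and \eqref{eq:hattheta2dagger_eq} of $\theta_{\bc,1}^\dagger$ and $\theta_{\bc,1}^{\dagger,\xi}$ in terms of $\eta^L_{\bc,1}$ instead of $\eta^R_{\bc,2}$. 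Once this identification is in place, positivity of $\hat\bg^{\bc}_1$ in part (iii) follows from positivity of $\hat{\Phi}_{(0,0),*}(e^{\theta_{\bc,1}^\dagger})$, inherited from the irreducibility of $P^{\{1,2\}}$, together with the same $U$-matrix/$G$-matrix eigenvector positivity argument used at the end of the proof of Proposition \ref{pr:hatxi2c_eq}, and the three parts of Corollary \ref{co:varphic1_pole} emerge as literal translations of the corresponding parts of Corollary \ref{co:varphic2_pole}.
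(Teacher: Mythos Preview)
Your approach is correct and matches the paper's own justification, which simply states that ``interchanging the $x_1$-axis with the $x_2$-axis, we immediately obtain by Corollary \ref{co:varphic2_pole} the following.'' One minor slip: after the coordinate swap the relevant block state process is the $(2,1)$-block process (so that $\hat\bnu'_{(0,k)}=(\bnu_{(2k,0)},\bnu_{(2k+1,0)})$), not the $(1,2)$-block process on its other boundary, but this does not affect the argument.
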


By Proposition \ref{pr:varphic0_finite} and Corollaries \ref{co:varphic2_pole} and \ref{co:varphic1_pole}, we obtain a main result of this subsection as follows.
\begin{theorem} \label{th:xi11_eq}
We have $\xi_{\bc}=\xi_{(1,1)} = \min\{\theta_{\bc,1}^\dagger,\,\theta_{\bc,2}^\dagger\}$, and if $\xi_{\bc}<\theta_{\bc}^{max}$,  the sequence $\{\bnu_{(k,k)}\}_{k\ge 0}$ geometrically decays with ratio $e^{-\xi_{\bc}}$ as $k$ tends to infinity, i.e., for some positive vector $\bg$, 
\[
\bnu_{(k,k)} \sim \bg e^{-\xi_{\bc}k}\ \mbox{as $k\to\infty$}.
\]
\end{theorem}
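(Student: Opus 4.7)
My plan is to follow the roadmap of Subsection \ref{sec:decay_rate}: study $\bvarphi^{\bc}(z)$ through the decomposition \eqref{eq:varphic_eq1}, locate its smallest positive singularity $x_0$, and read off $\xi_{\bc}=\log x_0$ using Cauchy--Hadamard for the radius of convergence and Theorem VI.4 of Flajolet--Sedgewick for the sharp geometric asymptotic. Set $\theta^{\star}:=\min\{\theta_{\bc,1}^\dagger,\theta_{\bc,2}^\dagger\}$; the target is $x_0=e^{\theta^{\star}}$.

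First I would dispose of the easy case $\theta^{\star}=\theta_{\bc}^{max}$: Lemma \ref{le:decayrate_upper} gives $\bar\xi_{\bc}\le\theta_{\bc}^{max}$, while Proposition \ref{pr:varphic0_finite} together with parts (i) of Corollaries \ref{co:varphic2_pole}--\ref{co:varphic1_pole} show that each $\bvarphi^{\bc}_i(z)$ converges on $[0,e^{\theta_{\bc}^{max}})$---after checking $\theta_{\bc,i}^{\dagger,\xi}=\theta_{\bc}^{max}$ in this regime using \eqref{eq:hattheta1dagger_eq}--\eqref{eq:hattheta2dagger_eq} together with $\xi_{(0,1)}=\min\{\theta_2^*,\theta_2^\dagger\}$ and $\xi_{(1,0)}=\min\{\theta_1^*,\theta_1^\dagger\}$. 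Cauchy--Hadamard then yields $\underline\xi_{\bc}\ge\theta_{\bc}^{max}$, so $\xi_{\bc}=\theta^{\star}$ and the second assertion is vacuous in this case.

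Next, in the main case $\theta^{\star}<\theta_{\bc}^{max}$, assume without loss of generality $\theta^{\star}=\theta_{\bc,2}^\dagger$. Parts (ii)--(iii) of Corollary \ref{co:varphic2_pole}, combined with the transfer identity \eqref{eq:hatphic2_eq21}, give that $\bvarphi^{\bc}_2(z)$ is analytic on $\Delta_{e^{\theta^{\star}}}\cup(\partial\Delta_{e^{\theta^{\star}}}\setminus\{e^{\theta^{\star}}\})$ with a simple pole at $e^{\theta^{\star}}$ of entry-wise positive residue. Proposition \ref{pr:varphic0_finite} keeps $\bvarphi^{\bc}_0$ analytic past $e^{\theta^{\star}}$, while Corollary \ref{co:varphic1_pole} contributes either further analyticity (if $\theta_{\bc,1}^\dagger>\theta^{\star}$) or an additional simple pole with positive residue at the same point (if $\theta_{\bc,1}^\dagger=\theta^{\star}$). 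Summing the three pieces, $\bvarphi^{\bc}(z)$ is meromorphic on $\Delta_{e^{\theta^{\star}}}\cup(\partial\Delta_{e^{\theta^{\star}}}\setminus\{e^{\theta^{\star}}\})$ with a unique simple pole at $e^{\theta^{\star}}$ whose residue, being a sum of entry-wise positive vectors, does not vanish. Theorem VI.4 of Flajolet--Sedgewick applied entry-wise then produces $\bnu_{(k,k)}\sim\bg e^{-\theta^{\star}k}$ with $\bg>0$, establishing simultaneously $\xi_{\bc}=\theta^{\star}$ and the stated geometric asymptotic.

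The main obstacle will be the interface between the theorem statement and the hypotheses of Corollaries \ref{co:varphic2_pole}--\ref{co:varphic1_pole}: upgrading ``$\theta_{\bc,i}^\dagger<\theta_{\bc}^{max}$'' to the corollary requirement ``$\theta_i^*<\theta_i^\dagger$'' and accommodating the degenerate configurations (for instance $\theta_i^*=\theta_i^\dagger$) in which the corollary's pole structure breaks down. The upgrade reduces to a tangency/convexity argument about $\Gamma^{\{i\}}$ and $\Gamma^{\{1,2\}}$ using the log-convexity noted after Assumption \ref{as:MAprocess_irreducible}, whereas the genuinely degenerate configurations may require an auxiliary block-state-process argument (leveraging $\{{}^{\bb}\bY_n\}$ of Subsection \ref{sec:block_state_process}) or a limiting/perturbative passage back to the non-degenerate case.
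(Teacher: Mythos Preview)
Your proposal is correct and follows essentially the same route as the paper: the decomposition \eqref{eq:varphic_eq1}, the trivial case via part (i) of the corollaries plus Lemma \ref{le:decayrate_upper}, and the main case via parts (ii)--(iii) feeding into Theorem VI.4 of Flajolet--Sedgewick. The paper organizes the main case as a four-way split on the relative position of $\theta_{\bc,1}^\dagger$, $\theta_{\bc,2}^\dagger$, $\theta_{\bc}^{max}$, but the substance is identical to yours.

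One point deserves correction: your worry about ``degenerate configurations (for instance $\theta_i^*=\theta_i^\dagger$)'' and the suggested block-state-process or perturbative workaround are unnecessary. The paper's geometric argument shows directly that \emph{whenever} $\theta_{\bc,i}^\dagger=\theta^{\star}<\theta_{\bc}^{max}$, the strict inequality $\theta_i^*<\theta_i^\dagger$ follows. The mechanism is simple: since $\theta^{\star}<\theta_{\bc}^{max}$, the line $\theta_1+\theta_2=\theta^{\star}$ meets $\partial\Gamma^{\{1,2\}}$ at two \emph{distinct} points, so $\eta^L_{\bc,1}(\theta^{\star})<\eta^R_{\bc,1}(\theta^{\star})$ and $\eta^R_{\bc,2}(\theta^{\star})<\eta^L_{\bc,2}(\theta^{\star})$; then $\theta_2^*=\eta^R_{\bc,2}(\theta^{\star})<\eta^L_{\bc,2}(\theta^{\star})\le\theta_2^\dagger$ (the last step because the $L$-point lies in $\overline{\Gamma^{\{2\}}\cap\Gamma^{\{1,2\}}}$ once one checks its $\theta_1$-coordinate is $\le\theta_1^*$, which is exactly what the case hypothesis on $\theta_{\bc,1}^\dagger$ delivers). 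So the hypothesis of the corollaries is always met in the relevant cases, and no limiting argument is needed. Similarly, for the ``further analyticity'' of $\bvarphi_1^{\bc}$ beyond $e^{\theta^{\star}}$ when $\theta_{\bc,1}^\dagger>\theta^{\star}$, part (i) alone suffices once you verify $\theta_{\bc,1}^{\dagger,\xi}>\theta^{\star}$, which again drops out of the same convexity picture.
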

%
\begin{figure}[t]
\begin{center}
\includegraphics[width=70mm,trim=0 0 0 0]{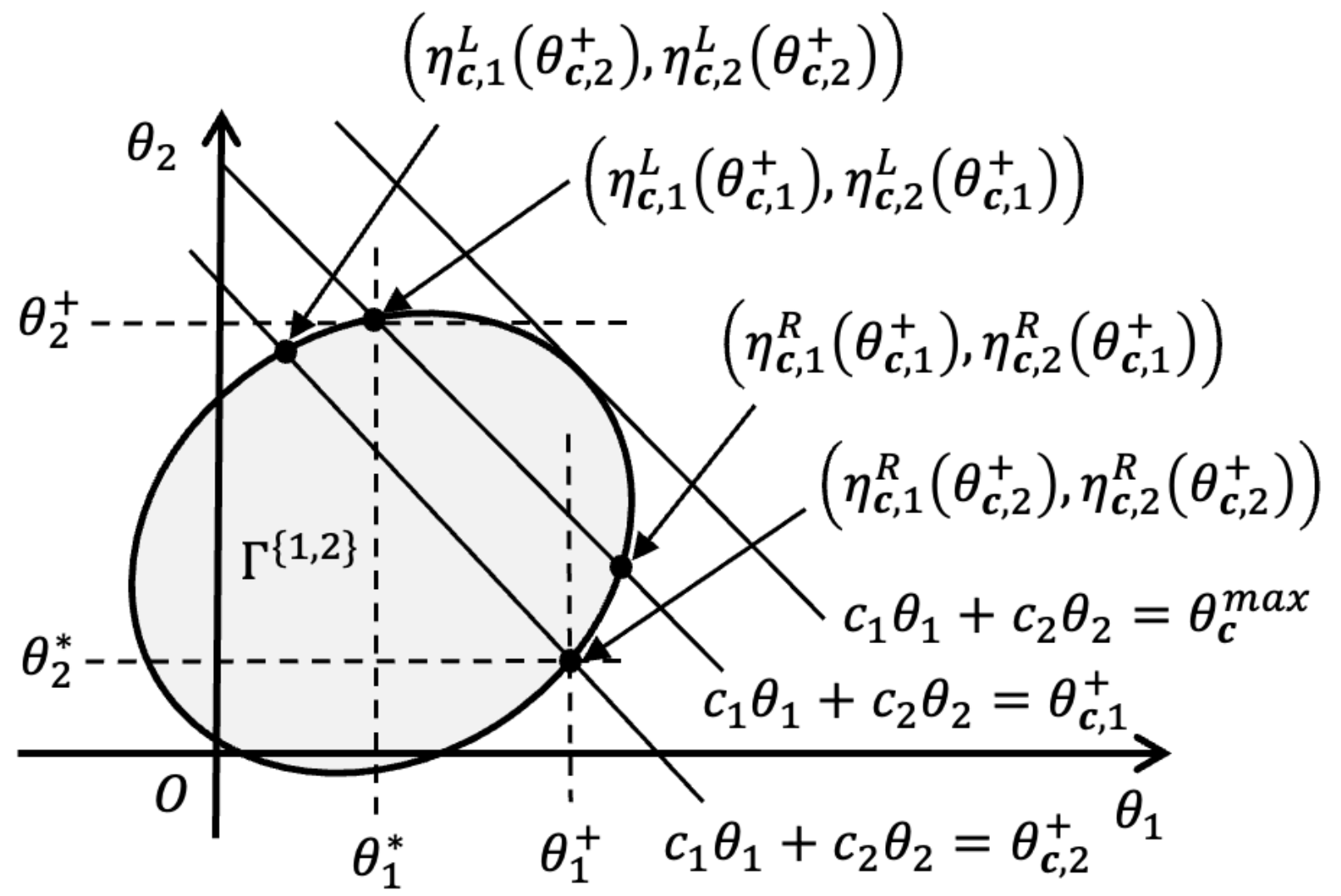} 
\caption{Domain $\Gamma^{\{1,2\}}$}
\label{fig:fig34}
\end{center}
\end{figure}
\begin{proof}
Recall that $\bvarphi^{\bc}(z)=\bvarphi^{\bc}_0(z)+\bvarphi^{\bc}_1(z)+\bvarphi^{\bc}_2(z)$. This $\bvarphi^{\bc}(z)$ is absolutely convergent and elementwise analytic in $\Delta_{e^{\underline{\xi}_{\bc}}}$ since $e^{\underline{\xi}_{\bc}}$ is the radius of the convergence of $\bvarphi^{\bc}(z)$. With respect to the values of $\theta_{\bc,1}^\dagger$ and $\theta_{\bc,2}^\dagger$, we consider the following cases. 

(1) $\theta_{\bc,1}^\dagger=\theta_{\bc,2}^\dagger=\theta_{\bc}^{max}$.\quad 
By Proposition \ref{pr:varphic0_finite} and Corollaries \ref{co:varphic2_pole} and \ref{co:varphic1_pole}, we have
\[
\underline{\xi}_{\bc}
\ge\min\{\xi_{\bc,0}, \xi_{\bc,1}, \xi_{\bc,2}\} 
\ge \min\{\xi_{\bc,0},\hat{\xi}_{\bc,1}, \hat{\xi}_{\bc,2}\}
\ge \min\{\theta_{\bc,1}^{\dagger,\xi}, \theta_{\bc,2}^{\dagger,\xi}\} 
\ge \min\{\theta_{\bc,1}^\dagger, \theta_{\bc,2}^\dagger\}
=\theta_{\bc}^{max}.  
\]
By \eqref{eq:xic_upper}, we have $\bar\xi_{\bc}\le \theta_{\bc}^{max}$, and hence $\xi_{\bc}= \theta_{\bc}^{max}=\min\{\theta_{\bc,1}^\dagger, \theta_{\bc,2}^\dagger\}$. 

(2) $\theta_{\bc,2}^\dagger<\theta_{\bc,1}^\dagger\le \theta_{\bc}^{max}$.\quad 
By Proposition \ref{pr:varphic0_finite} and Corollary \ref{co:varphic1_pole}, $\xi_{c,0}\ge \theta_{\bc}^{max}>\theta_{\bc,2}^\dagger$ and $\xi_{c,1}\ge \theta_{\bc,1}^\dagger> \theta_{\bc,2}^\dagger$. 
We have $\theta_1^*\ge \eta^L_{\bc,1}(\theta_{\bc,1}^\dagger)>\eta^L_{\bc,1}(\theta_{\bc,2}^\dagger)$ and this implies that $\theta_2^*=\eta^R_{\bc,2}(\theta_{\bc,2}^\dagger)< \eta^L_{\bc,1}(\theta_{\bc,2}^\dagger) \le \theta_2^\dagger$ (see Fig.\ \ref{fig:fig34}, where we assume $\bc=(1,1)$). 
Hence, by part (iii) of Corollary \ref{co:varphic2_pole}, $\hat\bvarphi_2^{\bc}(z)$ elementwise diverges at $z=e^{\theta_{\bc,2}^\dagger}$, and we have $\underline{\xi}_{\bc}=\theta_{\bc,2}^\dagger$. 
Since $\underline{\xi}_{\bc}<\theta_{\bc}^{max}\le \xi_{\bc,0}$ and $\underline{\xi}_{\bc}<\theta_{\bc,1}^\dagger\le\hat\xi_{\bc,1}$, $\bvarphi^{\bc}_0(z)$ and $\bvarphi^{\bc}_1(z)$ as well as $\hat\bvarphi^{\bc}_1(z)$ are elementwise analytic on $\partial\Delta_{e^{\underline{\xi}_{\bc}}}$. 
By part (ii) of Corollary \ref{co:varphic2_pole}, $\bvarphi^{\bc}_2(z)$ as well as $\hat\bvarphi^{\bc}_2(z)$ is elementwise analytic on $\partial\Delta_{e^{\underline{\xi}_{\bc}}}\setminus\{e^{\underline{\xi}_{\bc}}\}$. Hence, $\bvarphi^{\bc}(z)$ is elementwise analytic in $\Delta_{e^{\underline{\xi}_{\bc}}}\cup(\partial\Delta_{e^{\underline{\xi}_{\bc}}}\setminus\{e^{\underline{\xi}_{\bc}}\})$. 
As a result, by part (iii) of Corollary \ref{co:varphic2_pole} and Theorem VI.4 of Flajolet and Sedgewick \cite{Flajolet09}, the sequence $\{\bnu_{(k,k)}\}_{k\ge 0}$ geometrically decays with ratio $e^{-\theta^\dagger_{\bc,2}}$ as $k$ tends to infinity and we obtain $\xi_{\bc}=\underline{\xi}_{\bc}= \theta_{\bc,2}^\dagger=\min\{\theta_{\bc,1}^\dagger, \theta_{\bc,2}^\dagger\}<\theta_{\bc}^{max}$. 

(3) $\theta_{\bc,1}^\dagger<\theta_{\bc,2}^\dagger\le \theta_{\bc}^{max}$.\quad 
This case is symmetrical to the previous case. 

(4) $\theta_{\bc,1}^\dagger=\theta_{\bc,2}^\dagger<\theta_{\bc}^{max}$.\quad 
Set $\theta=\theta_{\bc,1}^\dagger$ ($=\theta_{\bc,2}^\dagger$). By Proposition \ref{pr:varphic0_finite}, $\underline{\xi}_{c,0}\ge \theta_{\bc}^{max}>\theta$. 
We have $\theta_1^*=\eta^L_{\bc,1}(\theta)< \eta^R_{\bc,1}(\theta) \le \theta_1^\dagger$ and $\theta_2^*=\eta^R_{\bc,2}(\theta)< \eta^L_{\bc,2}(\theta) \le \theta_2^\dagger$. Hence, in a manner similar to that used in part (2) above, we see that the sequence $\{\bnu_{(k,k)}\}_{k\ge 0}$ geometrically decays with ratio $e^{-\theta}$ as $k$ tends to infinity and obtain $\xi_{\bc}= \theta=\min\{\theta_{\bc,1}^\dagger, \theta_{\bc,2}^\dagger\}<\theta_{\bc}^{max}$. 
\end{proof}

%
%
%
\subsection{In the case of general direction vector $\bc$}

Letting $\bc=(c_1,c_2)\in\mathbb{N}^2$ be a direction vector, we obtain the asymptotic rate $\xi_{\bc}$. For the purpose, we consider the $\bc$-block state process derived from the original 2d-QBD process, $\{{}^{\bc}\bY_n\}=\{( ({}^{\bc}X_{1,n},{}^{\bc}X_{2,n}),({}^{\bc}M_{1,n},{}^{\bc}M_{2,n},{}^{\bc}\!J_n) ) \}$, whose state space is $\mathbb{Z}_+^2\times\mathbb{Z}_{0,c_1-1}\times\mathbb{Z}_{0,c_2-1}\times S_0$. 
Since the state $(k,k,0,0,j)$ of $\{{}^{\bc}\bY_n\}$ corresponds to the state $(c_1 k,c_2 k,j)$ of the original 2d-QBD process, we have for any $j\in S_0$ that  
\begin{equation}
\xi_{\bc} = {}^{\bc}\xi_{(1,1)} =  - \lim_{k\to\infty} \frac{1}{k} \log {}^{\bc}\nu_{(k,k,0,0,j)}, 
\end{equation}
where $\big({}^{\bc}\nu_{(x_1,x_2,r_1,r_2,j)}; (x_1,x_2,r_1,r_2,j)\in \mathbb{Z}_+^2\times\mathbb{Z}_{0,c_1-1}\times\mathbb{Z}_{0,c_2-1}\times S_0 \big)$ is the stationary distribution of $\{{}^{\bc}\bY_n\}$. Therefore, applying the results of the previous subsection to $\{{}^{\bc}\bY_n\}$, we can obtain $\xi_{\bc}$. 

Denote by ${}^{\bc}\!A_{*,*}^{\{1,2\}}(z_1,z_2)$ the matrix generating function of the transition probability blocks of $\{{}^{\bc}\bY_n\}$, corresponding to $A_{*,*}^{\{1,2\}}(z_1,z_2)$ of the original 2d-QBD process (see Appendix \ref{sec:block_2dQBD_results}).
The simultaneous equations corresponding to \eqref{eq:simuleq_11} are given by
\begin{equation}
\spr({}^{\bc}\!A^{\{1,2\}}_{*,*}(e^{\theta_1},e^{\theta_2}))=1,\quad \theta_1+\theta_2=\theta. 
\label{eq:simuleq0_c1c2}
\end{equation}
Since we have by Proposition 4.2 of Ozawa \cite{Ozawa21}  that 
\begin{equation}
\spr({}^{\bc}\!A_{*,*}^{\{1,2\}}(e^{c_1\theta_1},e^{c_2\theta_2})) = \spr(A_{*,*}^{\{1,2\}}(e^{\theta_1},e^{\theta_2})), 
\label{eq:cAssAss_relation}
\end{equation}
simultaneous equations \eqref{eq:simuleq0_c1c2} are equivalent to 
\begin{equation}
\spr(A^{\{1,2\}}_{*,*}(e^{\theta_1},e^{\theta_2}))=1,\quad c_1  \theta_1+c_2 \theta_2=\theta. 
\label{eq:simuleq1_c1c2}
\end{equation}
For $\theta\in[\theta_{\bc}^{min},\theta_{\bc}^{max} ]$, let $(\eta^R_{\bc,1}(\theta),\eta^R_{\bc,2}(\theta))$ and $(\eta^L_{\bc,1}(\theta), \eta^L_{\bc,2}(\theta))$ be the two real roots of simultaneous equations \eqref{eq:simuleq1_c1c2}, counting multiplicity, where $\eta^L_{\bc,1}(\theta)\le \eta^R_{\bc,1}(\theta)$ and $\eta^L_{\bc,2}(\theta)\ge \eta^R_{\bc,2}(\theta)$.
Redefine real values $\theta_{\bc,1}^\dagger$ and $\theta_{\bc,2}^\dagger$ as
\begin{align}
&\theta_{\bc,1}^\dagger = \max\{\theta\in[\theta_{\bc}^{min},\theta_{\bc}^{max}]; \eta^L_{\bc,1}(\theta)\le \theta_1^* \},
\label{eq:thetac1dagger_eq} \\
&\theta_{\bc,2}^\dagger = \max\{\theta\in[\theta_{\bc}^{min},\theta_{\bc}^{max}]; \eta^R_{\bc,2}(\theta)\le \theta_2^* \}, 
\label{eq:thetac2dagger_eq}
\end{align}
which are equivalent to definitions \eqref{eq:theta_bcd0}. 
Since the block state process $\{{}^{\bc}\bY_n\}$ is derived from the original 2d-QBD process, the former inherits all assumptions for the latter, including Assumption \ref{as:Y12_onZpmZmp_irreducible}. Hence, by Theorem \ref{th:xi11_eq}, we immediately obtain the following.
\begin{theorem} \label{th:xic1c2_eq}
For any direction vector $\bc\in\mathbb{N}^2$, $\xi_{\bc}= \min\{\theta_{\bc,1}^\dagger,\,\theta_{\bc,2}^\dagger\}$, and if $\xi_{\bc}<\theta_{\bc}^{max}$,  the sequence $\{\bnu_{k\bc}\}_{k\ge 0}$ geometrically decays with ratio $e^{-\xi_{\bc}}$ as $k$ tends to infinity, i.e., for some constant vector $\bg$, 
\[
\bnu_{k\bc} \sim \bg e^{-\xi_{\bc}k}\ \mbox{as $k\to\infty$}.
\]
\end{theorem}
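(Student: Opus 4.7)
The plan is to reduce Theorem \ref{th:xic1c2_eq} to the already-established Theorem \ref{th:xi11_eq} by applying it to the $\bc$-block state process $\{{}^{\bc}\bY_n\}$. Since $\{{}^{\bc}\bY_n\}$ is itself a 2d-QBD process and inherits irreducibility, aperiodicity, stability, and Assumption \ref{as:Y12_onZpmZmp_irreducible} from the original process (as noted in Appendix \ref{sec:block_2dQBD_results}), Theorem \ref{th:xi11_eq} applies to it with direction vector $(1,1)$, giving
\[
{}^{\bc}\xi_{(1,1)} = \min\{{}^{\bc}\theta^\dagger_{(1,1),1},\, {}^{\bc}\theta^\dagger_{(1,1),2}\},
\]
together with geometric decay without power terms of $\{{}^{\bc}\bnu_{(k,k)}\}_{k\ge 0}$ when this value is strictly less than ${}^{\bc}\theta_{(1,1)}^{max}$. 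Since the state $(k,k,0,0,j)$ of the block process corresponds to the state $(c_1 k, c_2 k, j) = k\bc$ of the original 2d-QBD process, we have $\xi_{\bc} = {}^{\bc}\xi_{(1,1)}$ and the $\bg$-asymptotics for $\{\bnu_{k\bc}\}$ follow from those for $\{{}^{\bc}\bnu_{(k,k)}\}$ by selecting the $(0,0)$-remainder component.

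The substantive step is then translating the block-process parameters ${}^{\bc}\theta^\dagger_{(1,1),i}$ and ${}^{\bc}\theta_{(1,1)}^{max}$ into the quantities $\theta_{\bc,i}^\dagger$ and $\theta_{\bc}^{max}$ defined directly in terms of $\Gamma^{\{1,2\}}$, $\theta_1^*$, $\theta_2^*$. The key tool is the spectral identity \eqref{eq:cAssAss_relation},
\[
\spr({}^{\bc}\!A^{\{1,2\}}_{*,*}(e^{c_1\theta_1}, e^{c_2\theta_2})) = \spr(A^{\{1,2\}}_{*,*}(e^{\theta_1}, e^{\theta_2})),
\]
which, after the change of variables $\theta_i \mapsto c_i\theta_i$, converts the defining system $\spr({}^{\bc}\!A^{\{1,2\}}_{*,*}(e^{\theta_1'},e^{\theta_2'}))=1$, $\theta_1'+\theta_2'=\theta$ used for ${}^{\bc}\theta^\dagger_{(1,1),i}$ into the system \eqref{eq:simuleq1_c1c2}. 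This identifies ${}^{\bc}\eta^L_{(1,1),1}(\theta) = c_1\eta^L_{\bc,1}(\theta)$ and similarly for the other root/component. Combined with the block-process identities ${}^{\bc}\theta^*_1 = c_1 \theta_1^*$ and ${}^{\bc}\theta^*_2 = c_2 \theta_2^*$ stated in Appendix \ref{sec:block_2dQBD_results}, the inequality ${}^{\bc}\eta^L_{(1,1),1}(\theta) \le {}^{\bc}\theta^*_1$ becomes $\eta^L_{\bc,1}(\theta)\le \theta_1^*$, so that ${}^{\bc}\theta^\dagger_{(1,1),1} = \theta^\dagger_{\bc,1}$ as given by \eqref{eq:thetac1dagger_eq}, and similarly ${}^{\bc}\theta^\dagger_{(1,1),2} = \theta^\dagger_{\bc,2}$. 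An analogous variable change shows ${}^{\bc}\theta^{max}_{(1,1)} = \theta_{\bc}^{max}$.

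Combining these identifications with Theorem \ref{th:xi11_eq} applied to $\{{}^{\bc}\bY_n\}$ yields $\xi_{\bc} = \min\{\theta^\dagger_{\bc,1}, \theta^\dagger_{\bc,2}\}$, and when this is strictly less than $\theta_{\bc}^{max}$ the geometric decay of $\{{}^{\bc}\bnu_{(k,k)}\}$ transfers to $\{\bnu_{k\bc}\}$ through the embedding of states. The main obstacle, in my view, is the bookkeeping in the parameter translation: one must verify carefully that every quantity appearing in the definitions \eqref{eq:thetac1dagger_eq}--\eqref{eq:thetac2dagger_eq}, including the coordinate-direction decay rates $\xi_{(1,0)}, \xi_{(0,1)}$ that enter through $\theta_1^*, \theta_2^*$, scales with the correct factor $c_i$ under the block construction, and that the ordering of roots (which root is $L$ and which is $R$) is preserved. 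Once that bookkeeping is in hand, the asymptotic-equivalence statement is immediate from the block identification of states, with the constant vector $\bg$ for the original process obtained by extracting the $(0,0,\cdot)$-component from the positive vector provided by Theorem \ref{th:xi11_eq}.
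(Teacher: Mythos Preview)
Your proposal is correct and follows essentially the same route as the paper: reduce to Theorem \ref{th:xi11_eq} applied to the $\bc$-block state process, use the state correspondence $(k,k,0,0,j)\leftrightarrow (c_1k,c_2k,j)$ to identify $\xi_{\bc}={}^{\bc}\xi_{(1,1)}$, and translate the block-process parameters via the spectral identity \eqref{eq:cAssAss_relation} together with ${}^{\bc}\theta_i^*=c_i\theta_i^*$. The paper treats the parameter bookkeeping a bit more tersely than you do, but the argument is the same.
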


%
%
\section{Geometric property and an example} \label{sec:discussion}

\subsection{The value of the asymptotic rare $\xi_{\bc}$}

Geometric consideration (see, for example, Miyazawa \cite{Miyazawa11}) is also useful in our case. Here we reconsider Theorem \ref{th:xic1c2_eq} geometrically.
Define two points $\mathrm{Q}_1$ and $\mathrm{Q}_2$ as $\mathrm{Q}_1=(\theta_1^*,\bar{\eta}_2(\theta_1^*))$ and $\mathrm{Q}_2=(\bar{\eta}_1(\theta_2^*),\theta_2^*)$, respectively. For the definition of $\theta_1^*$ and $\theta_2^*$, see \eqref{eq:thetai_sd}, and for the definition of $\bar{\eta}_1(\theta)$ and $\bar{\eta}_2(\theta)$, see Appendix \ref{sec:block_2dQBD_results}. 
Using these points, we define the following classification (see Fig.\ \ref{fig:classification}). 
\begin{itemize}
\item[] Type 1: $\theta_1^* \ge \bar{\eta}_1(\theta_2^*)$ and $\bar{\eta}_2(\theta_1^*) \le \theta_2^*$, \\
Type 2: $\theta_1^* < \bar{\eta}_1(\theta_2^*)$ and $\bar{\eta}_2(\theta_1^*) > \theta_2^*$, \\
Type 3: $\theta_1^* \ge \bar{\eta}_1(\theta_2^*)$ and $\bar{\eta}_2(\theta_1^*) > \theta_2^*$, \\
Type 4: $\theta_1^* < \bar{\eta}_1(\theta_2^*)$ and $\bar{\eta}_2(\theta_1^*) \le \theta_2^*$.
\end{itemize}
Let $\bc=(c_1,c_2)\in\mathbb{N}^2$ be an arbitrary direction vector. For $i\in\{1,2\}$, $\Gamma^{\{i\}}$ satisfies $\Gamma^{\{i\}}=\{(\theta_1,\theta_2)\in\mathbb{R}^2; \theta_i<\theta_i^*\}$. Hence, by \eqref{eq:theta_bcd0}, we have, for $i\in\{1,2\}$, 
\begin{equation}
\theta_{\bc,i}^\dagger = \sup\{c_1\theta_1+c_2\theta_2; (\theta_1,\theta_2)\in\Gamma^{\{1,2\}},\ \theta_{3-i}<\theta_{3-i}^* \}.
\end{equation}
From this representation for $\theta_{\bc,i}^\dagger$, we see that the asymptotic decay rate in direction $\bc$ is given depending on the geometrical relation between $\mathrm{Q}_1$ and $\mathrm{Q}_2$, as follows  (see Figs.\  \ref{fig:fig34} and \ref{fig:classification}). 
%
\begin{figure}[tb]
\begin{center}
\includegraphics[width=160mm,trim=0 0 0 0]{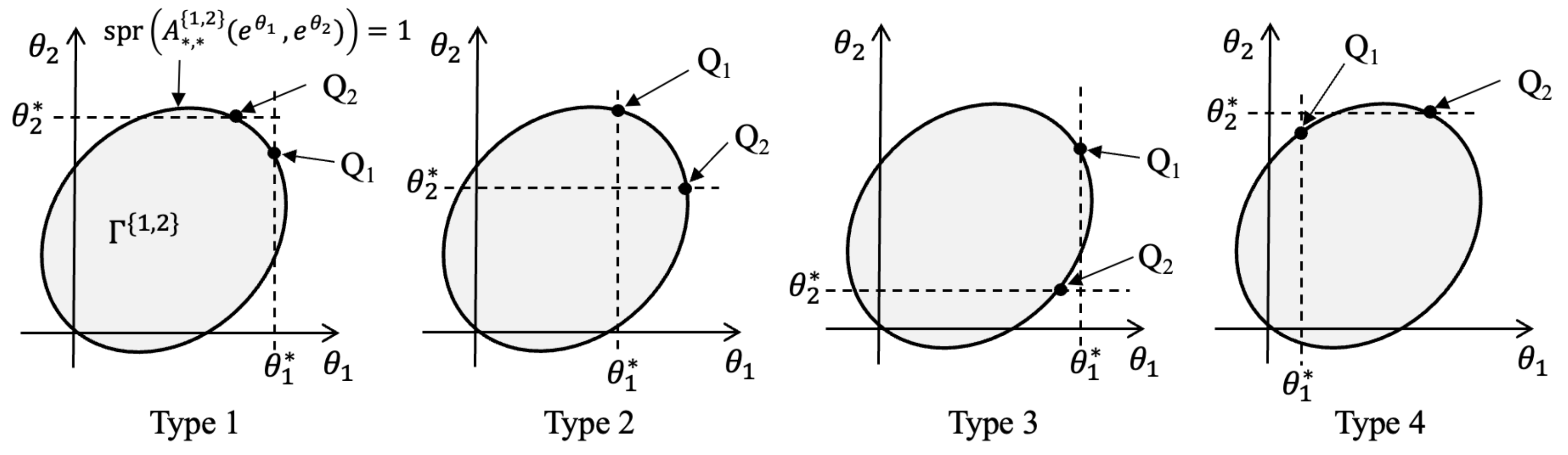} 
\caption{Classification}
\label{fig:classification}
\end{center}
\end{figure}
\begin{itemize}
\item \textit{Type 1}. 
If $-c_1/c_2\le \bar{\eta}'_2(\theta_1^*)$, then $\xi_{\bc}=c_1 \theta_1^* + c_2 \bar{\eta}_2(\theta_1^*) $, where $\bar{\eta}'_2(x) = (d/dx)\bar{\eta}_2(x)$;  
If $-c_2/c_1\le\bar{\eta}'_1(\theta_2^*)$, then $\xi_{\bc}=c_1 \bar{\eta}_1(\theta_2^*) +c_2 \theta_2^*$, where $\bar{\eta}'_1(x) = (d/dx)\bar{\eta}_1(x)$; 
Otherwise (i.e., $\bar{\eta}'_2(\theta_1^*) < -c_1/c_2< 1/\bar{\eta}'_1(\theta_2^*)$), $\xi_{\bc} = \theta_{\bc}^{max}$.
\item \textit{Type 2}. 
If $-c_1/c_2\le (\theta_2^*-\bar{\eta}_2(\theta_1^*))/(\bar{\eta}_1(\theta_2^*)-\theta_1^*)$, then  $\xi_{\bc}=c_1 \theta_1^* + c_2 \bar{\eta}_2(\theta_1^*) $; 
Otherwise (i.e., $-c_1/c_2> (\theta_2^*-\bar{\eta}_2(\theta_1^*))/(\bar{\eta}_1(\theta_2^*)-\theta_1^*))$, $\xi_{\bc}=c_1 \bar{\eta}_1(\theta_2^*) +c_2 \theta_2^*$. 
\item \textit{Type 3}. 
$\xi_{\bc}=c_1 \bar{\eta}_1(\theta_2^*)+c_2 \theta_2^*$.
\item \textit{Type 4}. 
$\xi_{\bc}=c_1 \theta_1^*+c_2 \bar{\eta}_2(\theta_1^*)$. 
\end{itemize}

This also holds for the case where $\bc=(1,0)$ or $\bc=(0,1)$.

%
%
\subsection{An example} \label{sec:example}

We consider the same queueing model as that used in Ozawa and Kobayashi \cite{Ozawa18b}. 
It is a single-server two-queue model in which the server visits the queues alternatively, serves one queue (queue 1) according to a 1-limited service and the other queue (queue 2) according to an exhaustive-type $K$-limited service (see Fig.\ \ref{fig:two_queue}). 
Customers arrive at queue 1 (resp.\ queue 2) according to a Poisson process with intensity $\lambda_1$ (resp.\ $\lambda_2$).  Service times are exponentially distributed with mean $1/\mu_1$ in queue 1 ($1/\mu_2$ in queue 2). The arrival processes and service times are mutually independent. 
We refer to this model as a $(1,K)$-limited service model. 
In this model, the asymptotic decay rate $\xi_{\bc}$ indicates how the joint queue length probability in steady state decreases as the queue lengths of queue 1 and queue 2 simultaneously enlarge.

%
\begin{figure}[tb]
\begin{center}
\includegraphics[width=80mm,trim=0 0 0 0]{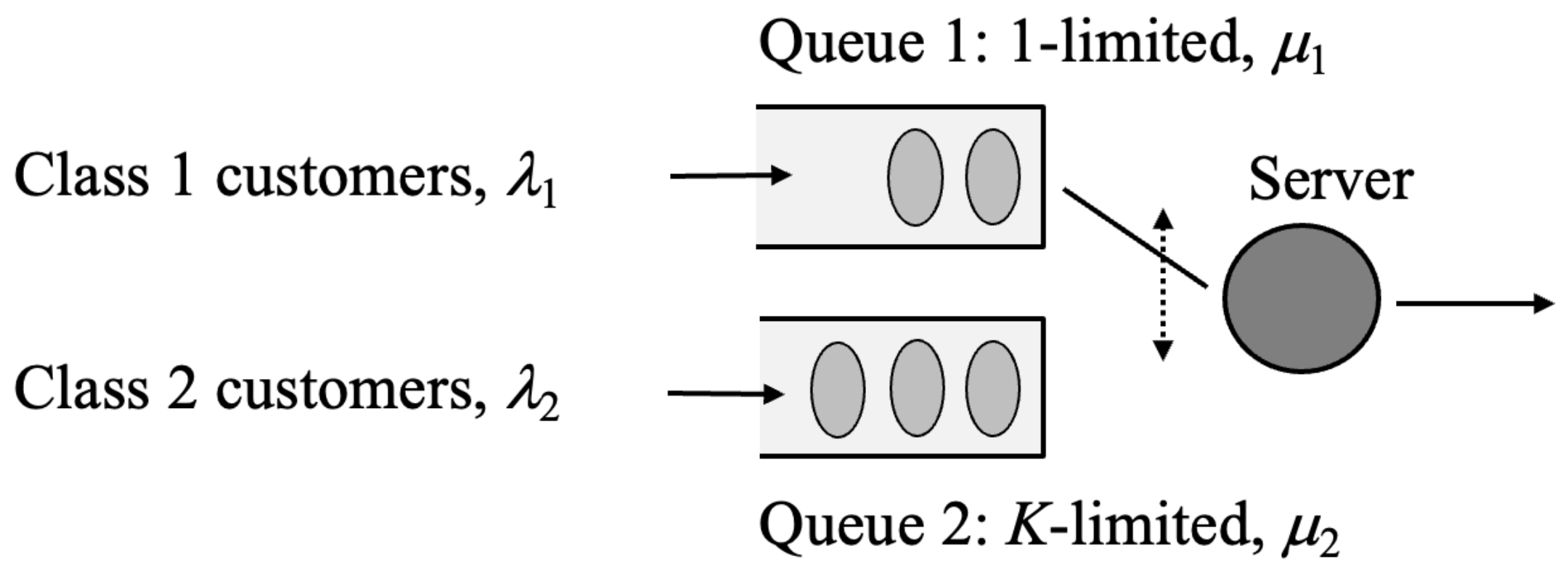} 
\caption{Single server two-queue model}
\label{fig:two_queue}
\end{center}
\end{figure}
%
\begin{figure}[tb]
\begin{center}
\includegraphics[width=160mm,trim=0 0 0 0]{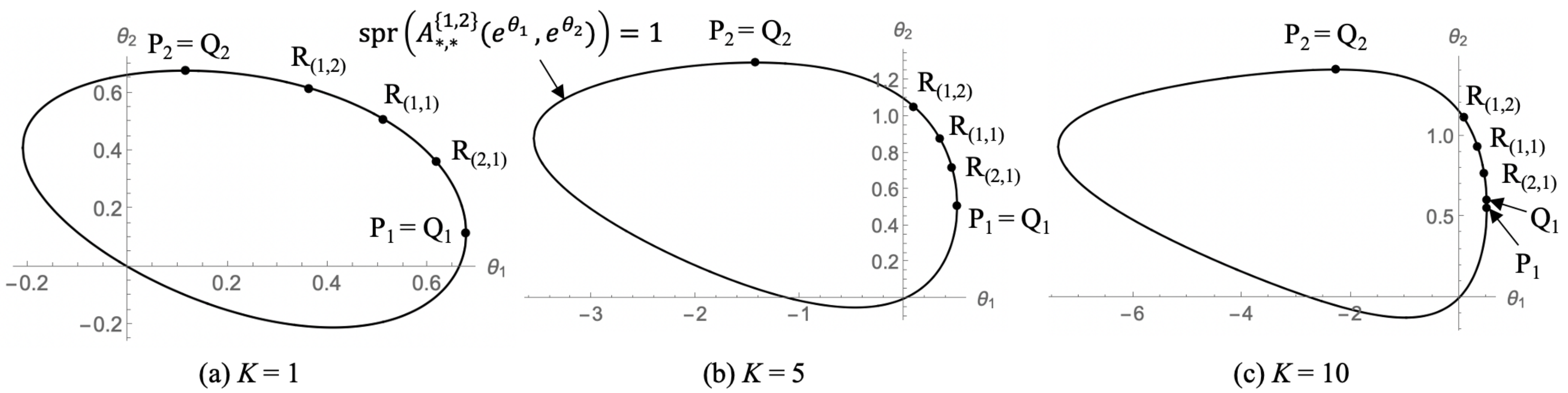} 
\\
{\small 
$\lambda_1=\lambda_2=0.3$, $\mu_1=\mu_2=1$, 
$\mathrm{P}_1=(\theta_1^{max},\bar{\eta}_2(\theta_1^{max}))$, $\mathrm{Q}_1=(\theta_1^*,\bar{\eta}_2(\theta_1^*))$, 
$\mathrm{P}_2=(\bar{\eta}_1(\theta_2^{max}),\theta_2^{max})$, $\mathrm{Q}_2=(\bar{\eta}_1(\theta_2^*),\theta_2^*)$, 
$\mathrm{R}_{\bc}=(\eta^R_{\bc,1}(\theta_{\bc}^{max}),\eta^R_{\bc,2}(\theta_{\bc}^{max}))$
}
\caption{Points on the closed curve $\cp(A^{\{1,2\}}_{*,*}(e^{\theta_1},e^{\theta_2}))=1$}
\label{fig:example1}
\end{center}
\end{figure}
%
\begin{figure}[tb]
\begin{center}
\includegraphics[width=160mm,trim=0 0 0 0]{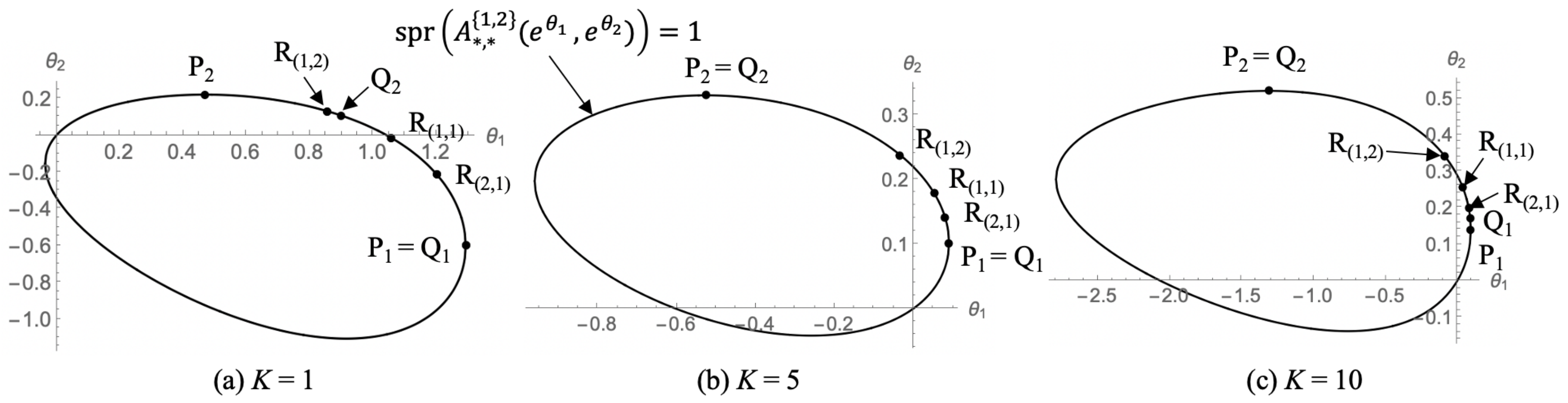} 
\\
{\small 
$\lambda_1=0.24,\ \lambda_2=0.7$, $\mu_1=1.2,\ \mu_2=1$, 
$\mathrm{P}_1=(\theta_1^{max},\bar{\eta}_2(\theta_1^{max}))$, $\mathrm{Q}_1=(\theta_1^*,\bar{\eta}_2(\theta_1^*))$, 
$\mathrm{P}_2=(\bar{\eta}_1(\theta_2^{max}),\theta_2^{max})$, $\mathrm{Q}_2=(\bar{\eta}_1(\theta_2^*),\theta_2^*)$, 
$\mathrm{R}_{\bc}=(\eta^R_{\bc,1}(\theta_{\bc}^{max}),\eta^R_{\bc,2}(\theta_{\bc}^{max}))$
}
\caption{Points on the closed curve $\cp(A^{\{1,2\}}_{*,*}(e^{\theta_1},e^{\theta_2}))=1$}
\label{fig:example2}
\end{center}
\end{figure}

Let $X_1(t)$ be the number of customers in queue 1 at time $t$, $X_2(t)$ that of customers in queue 2 and $J(t) \in S_0=\{0, 1, ..., K\}$ the server state. 
When $X_1(t)=X_2(t)=0$, $J(t)$ takes one of the states in $S_0$ at random; 
When $X_1(t)\ge 1$ and $X_2(t)=0$, it also takes one of the states in $S_0$ at random; 
When $X_1(t)=0$ and $X_2(t)\ge 1$, it takes the state of $0$ or $1$ at random if the server is serving the $K$-th customer in queue 2 during a visit of the server at queue 2 and takes the state of $j\in\{2, ..., K\}$ if the server is serving the $(K-j+1)$-th customer in queue 2; 
When $X_1(t)\ge 1$ and $X_2(t)\ge 1$, it takes the state of $0$ if the server is serving a customer in queue 1 and takes the state of $j\in\{1, ..., K\}$ if the server is serving the $(K-j+1)$-th customer in queue 2 during a visit of the server at queue 2. 
The process $\{(X_1(t), X_2(t), J(t))\}$ becomes a continuous-time 2d-QBD process on the state space $\mathbb{Z}_+^2\times S_0$. By uniformization with parameter $\nu=\lambda+\mu_1+\mu_2$, we obtain the corresponding discrete-time 2d-QBD process, $\{(X_{1,n}, X_{2,n}, J_n)\}$. For the description of the transition probability blocks such as $A^{\{1,2\}}_{i,j}$, see Ref.\ \cite{Ozawa18b}.
This $(1,K)$-limited service model satisfies Assumptions \ref{as:QBD_irreducible}, \ref{as:MAprocess_irreducible}, \ref{as:Y12_onZpZp_irreducible} and \ref{as:Y12_onZpmZmp_irreducible}.

In numerical experiments, we treat two cases: a symmetric parameter case (see Fig.\ \ref{fig:example1} and Table \ref{tab:example1}) and an asymmetric parameter case (see Fig.\ \ref{fig:example2} and Table \ref{tab:example2}). In both the cases, the value of $K$ is set at $1$, $5$ or $10$. 
In Figs.\ \ref{fig:example1} and \ref{fig:example2}, the closed curves of $\spr(A^{\{1,2\}}_{*,*}(e^{\theta_1},e^{\theta_2}))=1$ are drawn with points $\mathrm{Q}_1$ and $\mathrm{Q}_2$. Define points $\mathrm{P}_1$, $\mathrm{P}_2$ and $\mathrm{R}_{\bc}$ as $\mathrm{P}_1=(\theta_1^{max},\bar{\eta}_2(\theta_1^{max}))$,  $\mathrm{P}_2=(\bar{\eta}_1(\theta_2^{max}),\theta_2^{max})$ and $\mathrm{R}_{\bc}=(\eta^R_{\bc,1}(\theta_{\bc}^{max}),\eta^R_{\bc,2}(\theta_{\bc}^{max}))$, respectively. For the definition of $\theta_1^{max}$ and $\theta_2^{max}$, see Appendix \ref{sec:block_2dQBD_results}. These points are also written on the figures. 
From the figures, we see that all the cases are classified into Type 1. 
If $\mathrm{Q}_1=\mathrm{P}_1$ and $\mathrm{Q}_2=\mathrm{P}_2$ (see Fig.\ \ref{fig:example1} (a), (b) and Fig.\ \ref{fig:example2} (b)), then, for any $\bc=(c_1,c_2)\in\mathbb{N}^2$, $\xi_{\bc}$ is given by $\theta_{\bc}^{max}$. 
On the other hand, in the  symmetric case of $K=10$ (see Fig.\ \ref{fig:example1} (c)), $\xi_{\bc}$ is given by $\theta_{\bc}^{max}$ only if $-c_1/c_2> \bar{\eta}'_2(\theta_1^*)=-9.87$; In the asymmetric case of $K=1$ (see Fig.\ \ref{fig:example2} (a)), it is given by $\theta_{\bc}^{max}$ only if $-c_2/c_1> \bar{\eta}'_1(\theta_2^*)=-1.73$; In that of $K=10$ (see Fig.\ \ref{fig:example2} (c)), it is given by $\theta_{\bc}^{max}$ only if $-c_1/c_2> \bar{\eta}'_2(\theta_1^*)=-3.88$.
Tables \ref{tab:example1} and \ref{tab:example2} shows the normalized values of $\xi_{\bc}$, i.e., $\xi_{\bc}/\parallel\!\bc\!\parallel$, where $\parallel\!\bc\!\parallel=\sqrt{c_1^2+c_2^2}$. From the tables, it can be seen how the values of the asymptotic decay rate vary according to the direction vector. 
%
%
\begin{table}[htbp]
\caption{Asymptotic decay rates ($\lambda_1=\lambda_2=0.3$, $\mu_1=\mu_2=1$)}
\begin{center}
\begin{tabular}{c|cc|cc|ccccc}
$K$ & $\theta_1^{max}$ & $\theta_1^*$ & $\theta_2^{max}$ & $\theta_2^*$ & $\xi_{(1,0)}$ & $\xi_{(2,1)}/\sqrt{5}$ & $\xi_{(1,1)}/\sqrt{2}$ & $\xi_{(1,2)}/\sqrt{5}$ & $\xi_{(0,1)}$  \cr \hline
$1$ & $0.677$ & $\leftarrow$ & $0.677$ & $\leftarrow$ & $0.667$  & $0.714$ & $0.722$ & $0.714$ & $0.677$ \cr
$5$ & $0.511$ & $\leftarrow$ & $1.30$ & $\leftarrow$ & $0.511$  & $0.734$ & $0.866$ & $0.986$ & $1.30$ \cr
$10$ & $0.513$ & $0.511$ & $1.41$ & $\leftarrow$ & $0.511$  & $0.757$ & $0.901$ & $1.03$ & $1.41$
\end{tabular}
\end{center}
\label{tab:example1}
\end{table}%
%
\begin{table}[htbp]
\caption{Asymptotic decay rates ($\lambda_1=0.24$, $\lambda_2=0.7$, $\mu_1=1.2$, $\mu_2=1$)}
\begin{center}
\begin{tabular}{c|cc|cc|ccccc}
$K$ & $\theta_1^{max}$ & $\theta_1^*$ & $\theta_2^{max}$ & $\theta_2^*$ & $\xi_{(1,0)}$ & $\xi_{(2,1)}/\sqrt{5}$ & $\xi_{(1,1)}/\sqrt{2}$ & $\xi_{(1,2)}/\sqrt{5}$ & $\xi_{(0,1)}$  \cr \hline
$1$ & $1.29$ & $\leftarrow$ & $0.223$ & $0.110$ & $1.29$  & $0.98$ & $0.740$ & $0.500$ & $0.110$ \cr
$5$ & $0.091$ & $\leftarrow$ & $0.331$ & $\leftarrow$ & $0.091$  & $0.136$ & $0.164$ & $0.198$ & $0.331$ \cr
$10$ & $0.094$ & $0.090$ & $0.520$ & $\leftarrow$ & $0.090$  & $0.161$ & $0.208$ & $0.267$ & $0.520$
\end{tabular}
\end{center}
\label{tab:example2}
\end{table}%

%
%
\section{Concluding remark} \label{sec:conclusion}

The large deviation techniques are often used for investigating asymptotics of the stationary distributions in Markov processes on the positive quadrant (see, for example, Miyazawa \cite{Miyazawa11} and references therein). In analysis using them, the upper and lower bounds for the asymptotic decay rates are represented in terms of the large deviation rate function, and that rate function is given by the variational problem minimizing the total variances of the critical path. Set $\mathbb{D}^{\{1\}}=\{(x_1,x_2)\in\mathbb{R}^2; x_1>0, x_2=0 \}$, $\mathbb{D}^{\{2\}}=\{(x_1,x_2)\in\mathbb{R}^2; x_1=0,\,x_2>0 \}$ and $\mathbb{D}^{\{1,2\}}=\{(x_1,x_2)\in\mathbb{R}^2; x_1>0,\, x_2>0\}$. The following three kinds of path of point $\bp$ moving from the origin to a positive point $\bp_0\in\mathbb{D}^{\{1,2\}}$ are often used as options for the critical path (see, for example, Dai and Miyazawa \cite{Dai13} in the case of SRBM).
\begin{itemize}
\item Type-0 path: $\bp$ directly moves from the origin to $\bp_0$ through $\mathbb{D}^{\{1,2\}}$. 
\item Type-1 path: First, $\bp$ moves from the origin to some point on $\mathbb{D}^{\{1\}}$ through $\mathbb{D}^{\{1\}}$ and then it moves to $\bp_0$ through $\mathbb{D}^{\{1,2\}}$. 
\item Type-2 path: Replace $\mathbb{D}^{\{1\}}$ with $\mathbb{D}^{\{2\}}$ in the definition of Type-1 path. 
\end{itemize}
In our analysis, the generating function $\bvarphi^{\bc}(z)$ has been divided into three parts: $\bvarphi^{\bc}_0(z)$, $\bvarphi^{\bc}_1(z)$ and $\bvarphi^{\bc}_2(z)$, through compensation equation \eqref{eq:tildenux}.  
In some sense, $\bvarphi^{\bc}_0(z)$ evaluates Type-0 paths and ``$\xi_{\bc}=\theta_{\bc}^{max}$" corresponds to the case where the critical path is of Type-0; $\bvarphi^{\bc}_1(z)$ evaluates Type-1 paths and ``$\xi_{\bc}=\theta_{\bc,1}^\dagger<\theta_{\bc}^{max}$" corresponds to the case where the critical path is of Type-1; $\bvarphi^{\bc}_2(z)$ evaluates Type-2 paths and ``$\xi_{\bc}=\theta_{\bc,2}^\dagger<\theta_{\bc}^{max}$" corresponds to the case where the critical path is of Type-2. This analogy gives us some insight to investigate asymptotics of the stationary tail distributions in higher-dimensional QBD processes. 
For related topics with respect to queueing networks, see Foley and McDonald \cite{Foley05}, where paths called jitter, bridge and cascade ones are considered. Type-0 paths above correspond to jitter ones, and Type-1 and Type-2 paths to cascade ones.

%
%

%
%
\appendix

%
\section{Asymptotic properties of the block state process} \label{sec:block_2dQBD_results}

For $\bb=(b_1,b_2)\in\mathbb{N}^2$, let $\{{}^{\bb}\bY_n\}=\{({}^{\bb}\!\bX_n,({}^{\bb}\!\bM_n,{}^{\bb}\!J_n))\}$ be the $\bb$-block state process derived from a 2d-QBD process $\{\bY_n\}=\{(\bX_n,J_n)\}$, introduced in Subsection \ref{sec:block_state_process}. 
Since the $\bb$-block state process is also a 2d-QBD process, we obtain by the results of Refs. \cite{Ozawa13,Ozawa18,Ozawa21} the following.

Define vector generating functions ${}^{\bb}\bnu_{(*,0)}(z)$ and ${}^{\bb}\bnu_{(0,*)}(z)$ as
\[
{}^{\bb}\bnu_{(*,0)}(z) = \sum_{k=1}^\infty z^k\, {}^{\bb}\bnu_{(k,0)},\quad 
{}^{\bb}\bnu_{(0,*)}(z) = \sum_{k=1}^\infty z^k\, {}^{\bb}\bnu_{(0,k)}. 
\]
%
\begin{figure}[t]
\begin{center}
\includegraphics[width=80mm,trim=0 0 0 0]{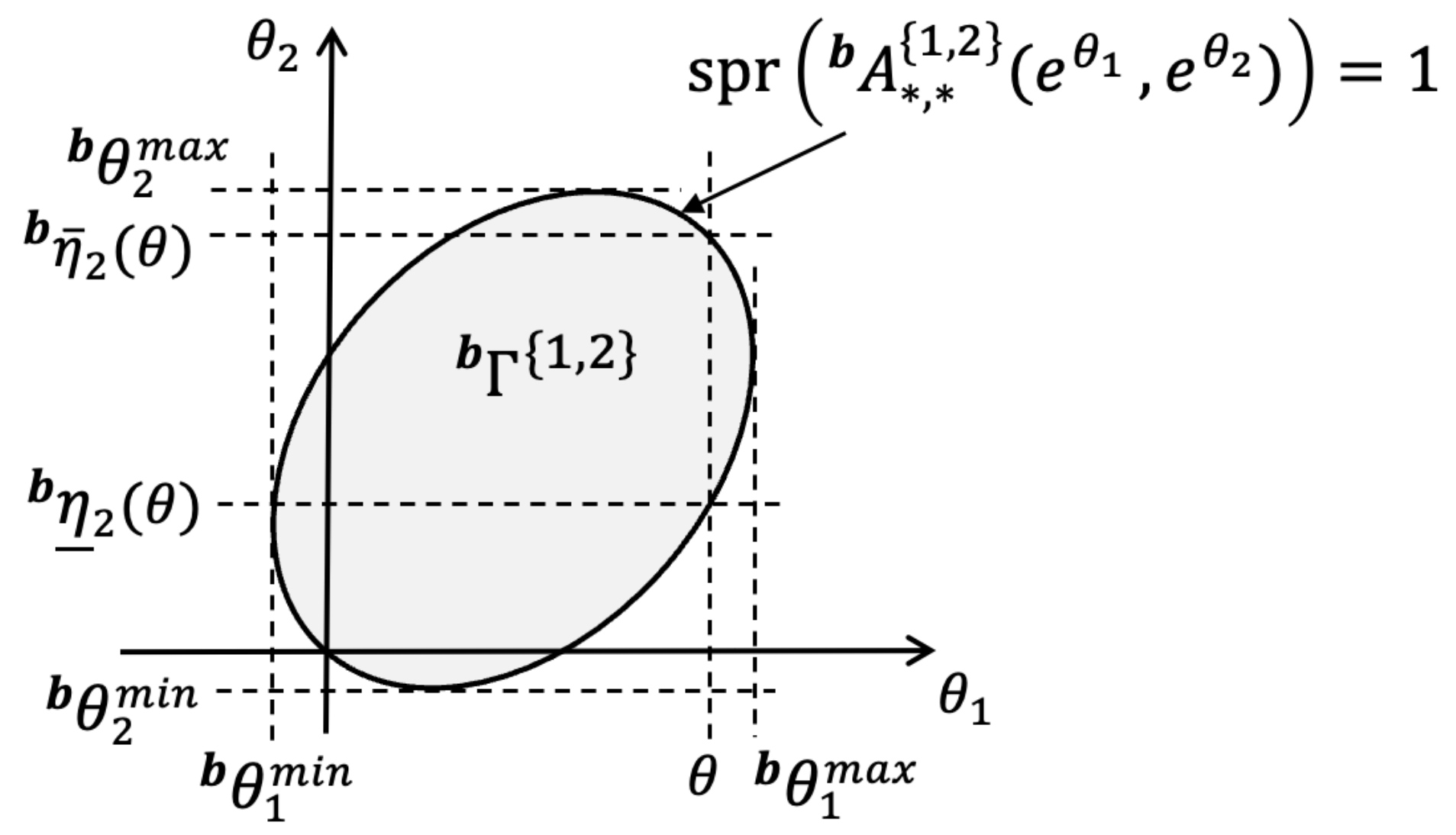} 
\caption{Domain ${}^{\bb}\Gamma^{\{1,2\}}$}
\label{fig:figA1}
\end{center}
\end{figure}
%
Define a matrix function ${}^{\bb}\!A^{\{1,2\}}_{*,*}(z_1,z_2)$ as
 \[
 {}^{\bb}\!A^{\{1,2\}}_{*,*}(z_1,z_2) = \sum_{i_1,i_2\in\{-1,0,1\}} z_1^{i_1} z_2^{i_2}\, {}^{\bb}\!A^{\{1,2\}}_{i_1,i_2}, 
 \]
and a domain ${}^{\bb}\Gamma^{\{1,2\}}$ as 
\[
{}^{\bb}{\Gamma}^{\{1,2\}} = \{(\theta_1,\theta_2)\in\mathbb{R}^2; \spr({}^{\bb}\!A^{\{1,2\}}_{*,*}(e^{\theta_1},e^{\theta_2}))<1\}. 
\]
By Lemma A.1 of Ozawa \cite{Ozawa21}, $\spr({}^{\bb}\!A^{\{1,2\}}_{*,*}(e^{\theta_1},e^{\theta_2}))$ is log-convex in $(\theta_1,\theta_2)$, and the closure of ${}^{\bb}{\Gamma}^{\{1,2\}}$ is a convex set. 
Define the extreme values of ${}^{\bb}{\Gamma}^{\{1,2\}}$, ${}^{\bb}\theta^{min}_i$  and ${}^{\bb}\theta^{max}_i$ for $i\in\{1,2\}$, as
\begin{equation}
{}^{\bb}\theta^{min}_i = \inf\{\theta_i; (\theta_1,\theta_2)\in{}^{\bb}{\Gamma}^{\{1,2\}} \},\quad 
{}^{\bb}\theta^{max}_i = \sup\{\theta_i; (\theta_1,\theta_2)\in{}^{\bb}{\Gamma}^{\{1,2\}} \}. 
\label{eq:thetai_maxmin}
\end{equation}
For $\theta_1\in[{}^{\bb}\theta^{min}_1,{}^{\bb}\theta^{max}_1]$, let ${}^{\bb}\underline{\eta}_2(\theta_1)$ and ${}^{\bb}\bar{\eta}_2(\theta_1)$ be the real two roots to equation 
\begin{equation}
\spr({}^{\bb}\!A^{\{1,2\}}_{*,*}(e^{\theta_1},e^{\theta_2}))=1,
\label{eq:sprA12_eq1}
\end{equation}
counting multiplicity, where ${}^{\bb}\underline{\eta}_2(\theta_1)\le {}^{\bb}\bar{\eta}_2(\theta_1)$ (see Fig.\,\ref{fig:figA1}). For $\theta_2\in[{}^{\bb}\theta^{min}_2,{}^{\bb}\theta^{max}_2]$, ${}^{\bb}\underline{\eta}_1(\theta_2)$ and ${}^{\bb}\bar{\eta}_1(\theta_2)$ are analogously defined. 
Hereafter, if $\bb=(1,1)$, we omit the left superscript $\bb$; for example, ${}^{\bb}\!A^{\{1,2\}}_{*,*}(e^{\theta_1},e^{\theta_2})$ is denoted by $A^{\{1,2\}}_{*,*}(e^{\theta_1},e^{\theta_2})$ and ${}^{\bb}{\Gamma}^{\{1,2\}}$ by ${\Gamma}^{\{1,2\}}$ ($A^{\{1,2\}}_{*,*}(e^{\theta_1},e^{\theta_2})$ and ${\Gamma}^{\{1,2\}}$ have already been defined in Section \ref{sec:intro}). By Proposition 4.2 of Ozawa \cite{Ozawa21}, we have
\begin{equation}
\spr(A^{\{1,2\}}_{*,*}(e^{\theta_1},e^{\theta_2})) = \spr({}^{\bb}\!A^{\{1,2\}}_{*,*}(e^{b_1 \theta_1},e^{b_2 \theta_2})). 
\label{eq:sprA_sprbA_relation}
\end{equation}
This implies that, for example, ${}^{\bb}\theta^{max}_1=b_1 \theta^{max}_1$ and ${}^{\bb}\underline{\eta}_2(b_1\theta_1)=b_2 \underline{\eta}_2(\theta_1)$.

%
For $i\in\{-1,0,1\}$ and $i'\in\{0,1\}$, define matrix functions ${}^{\bb}\!A_{i',*}^\emptyset(z)$, ${}^{\bb}\!A_{i,*}^{\{1\}}(z)$, ${}^{\bb}\!A_{i',*}^{\{2\}}(z)$ and ${}^{\bb}\!A_{i,*}^{\{1,2\}}(z)$ as
\begin{align*}
&{}^{\bb}\!A_{i',*}^{\emptyset}(z) = {}^{\bb}\!A_{i',0}^\emptyset+z\, {}^{\bb}\!A_{i',1}^\emptyset,\quad 
{}^{\bb}\!A_{i,*}^{\{1\}}(z) = {}^{\bb}\!A_{i,0}^{\{1\}}+z\, {}^{\bb}\!A_{i,1}^{\{1\}}, \\ 
&{}^{\bb}\!A_{i',*}^{\{2\}}(z) = z^{-1}\, {}^{\bb}\!A_{i',-1}^{\{2\}}+ {}^{\bb}\!A_{i',0}^{\{2\}}+z\, {}^{\bb}\!A_{i',1}^{\{2\}},\quad 
{}^{\bb}\!A_{i,*}^{\{1,2\}}(z) = z^{-1}\, {}^{\bb}\!A_{i,-1}^{\{1,2\}}+ {}^{\bb}\!A_{i,0}^{\{1,2\}}+z\, {}^{\bb}\!A_{i,1}^{\{1,2\}}.
 \end{align*}
 For $i\in\{-1,0,1\}$ and $i'\in\{0,1\}$, analogously define matrix functions ${}^{\bb}\!A_{*,i'}^\emptyset(z)$, ${}^{\bb}\!A_{*,i'}^{\{1\}}(z)$, ${}^{\bb}\!A_{*,i}^{\{2\}}(z)$ and ${}^{\bb}\!A_{*,i}^{\{1,2\}}(z)$. 
For $z_1\in[e^{{}^{\bb}\theta^{min}_1},e^{{}^{\bb}\theta^{max}_1}]$ and $z_2\in[e^{{}^{\bb}\theta^{min}_2},e^{{}^{\bb}\theta^{max}_2}]$, let ${}^{\bb}G_1(z_1)$ and ${}^{\bb}G_2(z_2)$ be the minimum nonnegative solutions to quadratic matrix equations \eqref{eq:bG1theta} and \eqref{eq:bG2theta}, respectively:
\begin{align}
&{}^{\bb}\!A_{*,-1}^{\{1,2\}}(z_1) + {}^{\bb}\!A_{*,0}^{\{1,2\}}(z_1) X + {}^{\bb}\!A_{*,1}^{\{1,2\}}(z_1) X^2 = X, \label{eq:bG1theta} \\
&{}^{\bb}\!A_{-1,*}^{\{1,2\}}(z_2) + {}^{\bb}\!A_{0,*}^{\{1,2\}}(z_2) X + {}^{\bb}\!A_{1,*}^{\{1,2\}}(z_2) X^2 = X, \label{eq:bG2theta}
\end{align}
where ${}^{\bb}G_1(z_1)$ and ${}^{\bb}G_2(z_2)$ are called G-matrices in the queueing theory. By Lemma 2.5 of Ozawa \cite{Ozawa21}, we have
\begin{equation}
\spr({}^{\bb}G_1(e^{\theta_1})) = e^{{}^{\bb}\underline{\eta}_2(\theta_1)},\quad 
\spr({}^{\bb}G_2(e^{\theta_2})) = e^{{}^{\bb}\underline{\eta}_1(\theta_2)}. 
\end{equation}
%
Define matrix functions ${}^{\bb}U_1(z_1)$ and ${}^{\bb}U_2(z_2)$ as
\[
{}^{\bb}U_1(z_1) = {}^{\bb}\!A_{*,0}^{\{1\}}(z_1) + {}^{\bb}\!A_{*,1}^{\{1\}}(z_1)\, {}^{\bb}G_1(z_1),\quad 
{}^{\bb}U_2(z_2) = {}^{\bb}\!A_{0,*}^{\{2\}}(z_2) + {}^{\bb}\!A_{1,*}^{\{2\}}(z_2)\, {}^{\bb}G_2(z_2),  
\]
and, for $i\in\{1,2\}$, a real value ${}^{\bb}\theta^*_i$ as
\[
{}^{\bb}\theta^*_i = \sup\{ \theta\in[{}^{\bb}\theta^{min}_i,{}^{\bb}\theta^{max}_i]; \spr({}^{\bb}U_i(e^{\theta}))< 1\}.
\]
Define real values ${}^{\bb}\theta^\dagger_1$ and  ${}^{\bb}\theta^\dagger_2$ as
\[
{}^{\bb}\theta^\dagger_1 =  \max\{ \theta\in[{}^{\bb}\theta^{min}_1,{}^{\bb}\theta^{max}_1]; {}^{\bb}\underline{\eta}_2(\theta) \le {}^{\bb}\theta^*_2 \},\quad 
{}^{\bb}\theta^\dagger_2 =  \max\{ \theta\in[{}^{\bb}\theta^{min}_2,{}^{\bb}\theta^{max}_2]; {}^{\bb}\underline{\eta}_1(\theta) \le {}^{\bb}\theta^*_1 \}.
\]
Note that if $\bb=(1,1)$, then, for $i\in\{1,2\}$, inequality $\spr({}^{\bb}U_i(e^\theta))=\spr(U_i(e^\theta))< 1$ is equivalent to $\cp(\bar{A}_*^{\{i\}}(e^\theta))>1$ (for the definition of $\bar{A}_*^{\{i\}}(z)$, see Section \ref{sec:intro}). Hence, for $i\in\{1,2\}$, $\Gamma^{\{i\}}$ defined in Section \ref{sec:intro} satisfies 
\begin{equation}
\Gamma^{\{i\}}=\{(\theta_1,\theta_2)\in\mathbb{R}^2; \spr(U_i(e^{\theta_i}))< 1\}.
\end{equation}
Furthermore, for $i\in\{1,2\}$, 
\begin{equation}
\theta_i^*=\sup\{\theta_i; (\theta_1,\theta_2)\in\Gamma^{\{i\}} \},\quad 
\theta_i^\dagger=\sup\{\theta_i; (\theta_1,\theta_2)\in\Gamma^{\{3-i\}}\cap\Gamma^{\{1,2\}} \}. 
\end{equation}
By Lemma 2.6 of Ozawa and Kobayashi \cite{Ozawa18}, we have the following.
\begin{lemma}
The asymptotic decay rates ${}^{\bb}\xi_{(1,0)}$ and ${}^{\bb}\xi_{(0,1)}$ are given by 
\begin{equation}
{}^{\bb}\xi_{(1,0)} = \min\{ {}^{\bb}\theta^*_1,\,{}^{\bb}\theta^\dagger_1 \},\quad 
{}^{\bb}\xi_{(0,1)} = \min\{ {}^{\bb}\theta^*_2,\, {}^{\bb}\theta^\dagger_2\}.  
\end{equation}
\end{lemma}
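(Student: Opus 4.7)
The plan is to exploit the fact that $\{{}^{\bb}\bY_n\}$ is itself a 2d-QBD process, so that the asymptotic decay rate in a coordinate direction can be read off from the existing characterization in Ozawa \cite{Ozawa13} and Ozawa--Kobayashi \cite{Ozawa18} (their Lemma 2.6). Since $\{{}^{\bb}\bY_n\}$ is constructed from the original 2d-QBD process by grouping blocks of size $b_1\times b_2$, irreducibility, aperiodicity and positive recurrence all lift from Assumptions \ref{as:QBD_irreducible}--\ref{as:2dQBD_stable}, and the induced MA-processes and their lossy restrictions corresponding to $\{{}^{\bb}\bY_n\}$ inherit the properties of Assumptions \ref{as:MAprocess_irreducible} and \ref{as:Y12_onZpZp_irreducible}. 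Once this lifting is verified, only the identification of the parameters remains.

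For ${}^{\bb}\xi_{(0,1)}$ I would focus on the vector generating function ${}^{\bb}\bnu_{(0,*)}(z)$. Using the stationary equation of $\{{}^{\bb}\bY_n\}$ together with the standard matrix analytic representation via the G-matrix ${}^{\bb}G_2(z)$ that solves \eqref{eq:bG2theta}, one obtains a functional equation of the form ${}^{\bb}\bnu_{(0,*)}(z)(I-{}^{\bb}U_2(z))=\text{(entries involving the origin block and analytic functions of $z$)}$. The smallest real $z_0>1$ at which $\spr({}^{\bb}U_2(z))=1$, equivalently $z_0=e^{{}^{\bb}\theta^*_2}$ by the definition of ${}^{\bb}\theta^*_2$, is therefore the candidate pole of ${}^{\bb}\bnu_{(0,*)}(z)$ coming from the reflection on the $x_2$-axis. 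The analyticity of the right-hand side on the other hand is limited by the domain on which ${}^{\bb}G_2(z)$ is well-defined and analytic, i.e.\ by the quadratic spectral condition encoded in ${}^{\bb}\Gamma^{\{1,2\}}$, and the corresponding supremum is precisely $e^{{}^{\bb}\theta^\dagger_2}$ by the definition $\spr({}^{\bb}G_2(e^{\theta_2}))=e^{{}^{\bb}\underline{\eta}_1(\theta_2)}$ together with the inequality ${}^{\bb}\underline{\eta}_1(\theta_2)\le {}^{\bb}\theta^*_1$. Combining the two sources of singularity and applying the Cauchy--Hadamard theorem together with Pringsheim's theorem (the nonnegativity of the coefficients forces a singularity on the positive real axis) gives ${}^{\bb}\xi_{(0,1)}=\min\{{}^{\bb}\theta^*_2,{}^{\bb}\theta^\dagger_2\}$. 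The argument for ${}^{\bb}\xi_{(1,0)}$ is symmetric under exchange of the two coordinates.

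The main obstacle is the bookkeeping needed to confirm that the parameters ${}^{\bb}\theta^*_i$ and ${}^{\bb}\theta^\dagger_i$ defined here coincide with those used in Lemma 2.6 of \cite{Ozawa18}. The paper \cite{Ozawa18} uses $\spr({}^{\bb}U_i(e^\theta))=1$ to define the boundary pole and characterizes the ``diagonal'' contribution through the G-matrix; matching these against the definitions in this appendix amounts to checking the log-convexity properties of $\spr({}^{\bb}A^{\{1,2\}}_{*,*}(e^{\theta_1},e^{\theta_2}))$ (Lemma A.1 of \cite{Ozawa21}) and the identity $\spr({}^{\bb}G_i(e^{\theta_i}))=e^{{}^{\bb}\underline{\eta}_{3-i}(\theta_i)}$ (Lemma 2.5 of \cite{Ozawa21}). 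Once these identifications are in hand, the lemma is a direct translation of Lemma 2.6 of \cite{Ozawa18} to the notation of this paper; no new probabilistic content is needed.
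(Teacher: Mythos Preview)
Your proposal is correct and matches the paper's approach: the paper does not give a proof either, it simply invokes Lemma 2.6 of Ozawa--Kobayashi \cite{Ozawa18}, relying on the fact that the $\bb$-block state process is itself a 2d-QBD process inheriting all the standing assumptions. Your sketch actually goes further than the paper by outlining the mechanism behind that cited lemma; the only minor inaccuracy is in your description of where $e^{{}^{\bb}\theta^\dagger_2}$ arises---it is not the boundary of analyticity of ${}^{\bb}G_2(z)$ itself, but rather the threshold at which the series $\sum_{k\ge 1}{}^{\bb}\bnu_{(k,0)}\,{}^{\bb}G_2(z)^{k}$ on the right-hand side of \eqref{eq:bnu0s_eq} diverges, which couples the decay rate of ${}^{\bb}\bnu_{(k,0)}$ with $\spr({}^{\bb}G_2(z))=e^{{}^{\bb}\underline{\eta}_1(\log z)}$.
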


By \eqref{eq:sprA_sprbA_relation}, we have 
\begin{align}
&{}^{\bb}\theta^*_1=b_1 \theta^*_1,\quad
{}^{\bb}\theta^\dagger_1=b_1 \theta^\dagger_1,\quad 
{}^{\bb}\xi_{(1,0)} = b_1 \xi_{(1,0)}, \\
&{}^{\bb}\theta^*_2=b_2 \theta^*_2,\quad
{}^{\bb}\theta^\dagger_2=b_2 \theta^\dagger_2,\quad 
{}^{\bb}\xi_{(0,1)} = b_2 \xi_{(0,1)}.
\end{align}
%
By Proposition 3.3 of Ozawa and Kobayashi \cite{Ozawa18}, ${}^{\bb}\bnu_{(0,*)}(z)$ satisfies the following equation:
\begin{align}
{}^{\bb}\bnu_{(0,*)}(z) 
&= \sum_{k=1}^\infty {}^{\bb}\bnu_{(k,0)} \sum_{i\in\{-1,0,1\}} ({}^{\bb}\!A^{\{1\}}_{i,*}(z)-{}^{\bb}\!A_{i,*}^{\{1,2\}}(z))\, {}^{\bb}G_2(z)^{k+i} \left(I-{}^{\bb}U_2(z) \right)^{-1}  \cr
&\qquad + {}^{\bb}\bnu_{(0,0)}  \sum_{i\in\{0,1\}} ({}^{\bb}\!A^{\emptyset}_{i,*}(z)-{}^{\bb}\!A_{i,*}^{\{2\}}(z))\, {}^{\bb}G_2(z)^i \left(I-{}^{\bb}U_2(z)\right)^{-1}. 
\label{eq:bnu0s_eq}
\end{align}
This is a kind of compensation equation. An equation similar to \eqref{eq:bnu0s_eq} also holds for ${}^{\bb}\bnu_{(*,0)}(z) $.

%
%

%
\section{Proof of Proposition \ref{pr:xi_equality}} \label{sec:xi_equality}

\begin{proof}[Proof of Proposition \ref{pr:xi_equality}]
For a sequence $\{a_n\}_{n\ge 1}$, we denote by $\bar{a}_k$ the partial sum of the sequence defined as $\bar{a}_k=\sum_{n=1}^k a_n$. 
Let $\bc=(c_1,c_2)$ be a vector of positive integers. Let $(\bx,j)$ and $(\bx',j')$ be arbitrary states in $\mathbb{N}^2\times S_0$ such that $(\bx,j)\ne (\bx',j')$. 
Since the induced MA-process $\{\bY^{\{1,2\}}_n\}$ is irreducible, there exist a $k_0\ge 1$, $n_0\ge 1$ and sequence $\{(l_n,m_n,j_n)\in\{-1,0,1\}^2\times S_0; 1\le n\le n_0\}$ such that $\bx+k_0\bc+(\bar{l}_k,\bar{l}_k)>(0,0)$ and $(\bx+k_0\bc+(\bar{l}_k,\bar{l}_k),j_k)\ne(\bx'+k_0\bc,j')$ for every integer $k\in[1,n_0-1]$, $(\bx+k_0\bc+(\bar{l}_{n_0},\bar{m}_{n_0}),j_{n_0})=(\bx'+k_0\bc,j')$ and  
\[
p^*= [A^{\{1,2\}}_{l_1,m_1}]_{j,j_1} \prod_{n=2}^{n_0-1} [A^{\{1,2\}}_{l_n,m_n}]_{j_{n-1},j_n} [A^{\{1,2\}}_{l_{n_0},m_{n_0}}]_{j_{n_0-1},j'}>0.   
\] 
Such a sequence gives a path from $\bY^{\{1,2\}}_0=(\bx+k_0\bc,j)$ to $\bY^{\{1,2\}}_{n_0}=(\bx'+k_0\bc,j')$ on $\mathbb{N}^2\times S_0$, and that path is also a path from $\bY_0=(\bx+k_0\bc,j)$ to $\bY_{n_0}=(\bx'+k_0\bc,j')$ in the original 2d-QBD process $\{\bY_n\}$.
For $k\ge 1$, let $\tau^{(k)}$ be the first hitting time to the state $(\bx+k\bc,j)$ in $\{\bY_n\}$, i.e., $\tau^{(k)}=\inf\{n\ge 1; \bY_n=(\bx+k\bc,j)\}$, and denote by $(q^{(k)}_{(\bx'',j'')}; (\bx'',j'')\in\mathbb{Z}_+^2\times S_0)$ the occupation measure defined as
\[
q^{(k)}_{(\bx'',j'')} = \mathbb{E}\Big( \sum_{n=0}^{\tau^{(k)}-1} 1(\bY_n=(\bx'',j'')) \,|\, \bY_0=(\bx+k\bc,j) \Big).
\]
Then, we have 
\begin{equation}
\nu_{(\bx'+k\bc,j')} = q^{(k)}_{(\bx'+k\bc,j')} \nu_{(\bx+k\bc,j)}.
\end{equation}
Due to the space homogeneity of $\{{\bY}^{\{1,2\}}_n\}$ with respect to the additive part, for every $k\ge k_0$, there exists a path from $\bY^{\{1,2\}}_0=(\bx+k\bc,j)$ to $\bY^{\{1,2\}}_{n_0}=(\bx'+k\bc,j')$ given by the same sequence as $\{(l_n,m_n,j_n)\in\{-1,0,1\}^2\times S_0; 1\le n\le n_0\}$ mentioned above, and it is also a path from $\bY_0=(\bx+k\bc,j)$ to $\bY_{n_0}=(\bx'+k\bc,j')$ in the original 2d-QBD process. Hence, we have $q^{(k)}_{(\bx'+k\bc,j')}\ge p^*$ and obtain
\begin{equation}
\nu_{(\bx'+k\bc,j')}\ge p^* \nu_{(\bx+k\bc,j)}, 
\end{equation}
where $p^*$ does not depend on $k$. This leads us to $\underline{\xi}_{\bc}(\bx',j')\le \underline{\xi}_{\bc}(\bx,j)$ and  $\bar\xi_{\bc}(\bx',j')\le \bar\xi_{\bc}(\bx,j)$. Interchanging $(\bx,j)$ with $(\bx',j')$, we analogously obtain $\underline{\xi}_{\bc}(\bx,j)\le \underline{\xi}_{\bc}(\bx',j')$ and $\bar\xi_{\bc}(\bx,j)\le \bar\xi_{\bc}(\bx',j')$. This completes the proof. 
\end{proof}


\begin{thebibliography}{99}
%
\bibitem{Bini05}
Bini, D.A., Latouche, G.\ and Meini, B.,
\textit{Numerical Solution of Structured Markov Chains}, 
Oxford University Press, Oxford (2005).
%
\bibitem{Borovkov01}
Borovkov, A.A.\ and Mogul'ski\u\i, A.A.,
Large deviations for Markov chains in the positive quadrant, 
\textit{Russian Mathematical Surveys} \textbf{56} (2001), 803--916. 
%
\bibitem{Dai13}
Dai, J.G.\ and Miyazawa, M., 
Stationary distribution of a two-dimensional SRBM: geometric views and boundary measures, 
\textit{Queueing Systems} \textbf{74} (2013), 181--217. 
%
\bibitem{Fayolle95} 
Fayolle, G., Malyshev, V.A.\ and Menshikov, M.V.,
{\it Topics in the Constructive Theory of Countable Markov Chains},  
Cambridge University Press, Cambridge (1995).
%
\bibitem{Flajolet09}
Flajolet, P.\ and Sedgewick, R.,
\textit{Analytic Combinatorics}, 
Cambridge University Press, Cambridge (2009).
%
%
\bibitem{Foley05}
Foley, R.D.\ and McDonald, D.R., 
Large deviations of a modified Jackson network: Stability and rough asymptotics, 
\textit{The Annals of Applied Probability} \textbf{15(1B)} (2005), 519--541.
%
\bibitem{Horn91}
Horn, R.A.\ and Johnson, C.R.,  
\textit{Topics in Matrix Analysis}, 
Cambridge University Press, Cambridge (1991).
%
\bibitem{Keilson79}
Keilson, J.,  
\textit{Markov Chain Models -- Rarity and Exponentiality}, 
Springer-Verlag, New York (1979).
%
%
%
\bibitem{Kobayashi13}
Kobayashi, M.\ and Miyazawa, M., 
Revisit to the tail asymptotics of the double QBD process: Refinement and complete solutions for the coordinate and diagonal directions, 
\textit{Matrix-Analytic Methods in Stochastic Models} (2013), 145-185.
%
%
\bibitem{Latouche99} 
Latouche, G.\ and Ramaswami, V., 
{\it Introduction to Matrix Analytic Methods in Stochastic Modeling}, 
SIAM, Philadelphia (1999).
%
%
\bibitem{Markushevich05}
Markushevich, A.I., 
\textit{Theory of Functions of a Complex Variable},  
AMS Chelsea Publishing, Providence (2005). 
%
%
\bibitem{Miyazawa09}
Miyazawa, M., 
Tail decay rates in double QBD processes and related reflected random walks, 
\textit{Mathematics of Operations Research} \textbf{34(3)} (2009), 547--575.
%
\bibitem{Miyazawa11}
Miyazawa, M., 
Light tail asymptotics in multidimensional reflecting processes for queueing networks,  
\textit{TOP} \textbf{19(2)} (2011), 233--299.
%
%
\bibitem{Miyazawa15}
Miyazawa, M., 
Superharmonic vector for a nonnegative matrix with QBD block structure and its application to a Markov modulated two dimensional reflecting process, 
\textit{Queueing Systems} \textbf{81} (2015), 1--48.
%
\bibitem{Miyazawa21}
Miyazawa, M., 
Markov modulated fluid network process: Tail asymptotics of the stationary distribution, 
\textit{Stochastic Models} \textbf{37} (2021), 127--167. 
%
\bibitem{Neuts94}
Neuts, M.F.,  
\textit{Matrix-Geometric Solutions in Stochastic Models}, 
Dover Publications, New York (1994).
%
\bibitem{Neuts89}
Neuts, M.F., 
{\it Structured stochastic matrices of M/G/1 type and their applications}, 
Marcel Dekker, New York (1989).
%
\bibitem{Ney87}
Ney, P.\ and Nummelin, E., 
Markov additive processes I. Eigenvalue properties and limit theorems, 
\textit{The Annals of Probability} \textbf{15(2)} (1987), 561--592. 
%
\bibitem{Nummelin84}
Nummelin, E., 
\textit{General Irreducible Markov Chains and Non-negative Operators},
Cambridge University Press, Cambridge (1984).
%
\bibitem{Ozawa13}
Ozawa, T.,
Asymptotics for the stationary distribution in a discrete-time two-dimensional quasi-birth-and-death process, 
\textit{Queueing Systems} \textbf{74} (2013), 109--149.
%
\bibitem{Ozawa18}
Ozawa, T.\ and Kobayashi, M., 
Exact asymptotic formulae of the stationary distribution of a discrete-time two-dimensional QBD process, 
\textit{Queueing Systems}  \textbf{90} (2018), 351-403. 
%
\bibitem{Ozawa18b}
Ozawa, T.\ and Kobayashi, M.: 
Exact asymptotic formulae of the stationary distribution of a discrete-time 2d-QBD process: an example and additional proofs, 
arXiv:1805.04802 (2018).
%
\bibitem{Ozawa19}
Ozawa, T., 
Stability condition of a two-dimensional QBD process and its application to estimation of efficiency for two-queue models, 
\textit{Performance Evaluation} \textbf{130} (2019), 101--118.
%
\bibitem{Ozawa21}
Ozawa, T., 
Asymptotic properties of the occupation measure in a multidimensional skip-free Markov modulated random walk,
\textit{Queueing Systems}  \textbf{97} (2021), 125--161. 
%
%
\end{thebibliography}
\end{document}